\swapnumbers \numberwithin{equation}{section}
\theoremstyle{plain}
\newtheorem{thm}{Theorem}[section]
\newtheorem{lemma}[thm]{Lemma}
\newtheorem{prop}[thm]{Proposition}
\newtheorem{cor}[thm]{Corollary}
\theoremstyle{definition}
\newtheorem{defn}[thm]{Definition}
\newtheorem{remark}[thm]{Remark}
\newtheorem{question}[thm]{Question}
 \newcommand{\Wi}{\widetilde}
\DeclareMathOperator{\Ker}{{\rm Ker}}
\def\Z{{\mathbb Z}}
\def\Q{{\mathbb Q}}
\def\N{{\mathbb N}}
\def\1{\hbox{\rm\rlap {1}\hskip.03in{\rom I}}}
\def\Bbbone{{\rm1\mathchoice{\kern-0.25em}{\kern-0.25em}
{\kern-0.2em}{\kern-0.2em}I}}
\long\def\forget#1\forgotten{} %
\newcommand\ver[1]{\marginpar{\tiny Changed in Ver \VER}}
\newcommand{\mc}{ \text {mc}}
\date{\today}
\begin{document}

\title[On Gromov's conjecture]{On Gromov's conjecture for right-angled Artin groups}

\author[A.~Dranishnikov]{Alexander  Dranishnikov$^{1}$} 

\author[S.~Howladar]{Satyanath Howladar}

\thanks{$^{1}$Supported by Simons Foundation}

\address{Alexander N. Dranishnikov, Department of Mathematics, University
of Florida, 358 Little Hall, Gainesville, FL 32611-8105, USA}
\email{dranish@ufl.edu}

\address{Satyanath Howladar, Department of Mathematics, University
of Florida, 358 Little Hall, Gainesville, FL 32611-8105, USA}
\email{showladar@ufl.edu}

\subjclass[2000]
{Primary  53C23,  
Secondary 57N65, 19L41, 19M05, 20F36  
}

\keywords{}

\begin{abstract}

We explore in terms of KO-homology the 2nd obstruction for the universal covering $\Wi M$ of a closed spin $n$-manifold $M$ with positive scalar curvature (PSC) to have macroscopic dimension $\le n-2$. As a corollary we prove that the universal covering $\Wi M$ of an $n$-dimensional closed spin PSC manifold $M$ with $\pi_1(M)\in$ RAAG has macroscopic dimension $\dim_{mc}\Wi M\le n-2$. This confirms Gromov's conjecture  in the case of  RAAG (right-angled Artin group) fundamental groups.
\end{abstract}


  \keywords{ positive scalar curvature, inessential manifold,  macroscopic dimension, right-angled Artin groups}

\maketitle
\section {Introduction}

The notion of macroscopic dimension was introduced by M.
Gromov~\cite{G2} to study topology of manifolds with a positive scalar curvature  metric.

\begin{defn}
 A metric space $X$ has the macroscopic dimension $\dim_{\mc} X \leq k$ if
there is a uniformly cobounded proper map $f:X\to K$ to a $k$-dimensional simplicial complex.
Then $\dim_{mc}X=m$ where $m$ is minimal among $k$ with $\dim_{\mc} X \leq k$.
\end{defn}
\smallskip
A map of a metric space $f:X\to Y$ is uniformly cobounded if there is a uniform upper bound on the diameter of preimages $f^{-1}(y)$, $y\in Y$.

\

{\bf Gromov's Conjecture.} {\it The macroscopic dimension of the universal covering $\Wi M$ of  a closed  positive scalar curvature  $n$-manifold $M$ satisfies the inequality $\dim_{mc}\Wi M\leq n-2$ for the metric on $\Wi M$ lifted from $M$}.

\

The main examples supporting Gromov's Conjecture are $n$-manifolds of the form $M=N\times S^2$. They admit metrics with PSC in view of
the formula $Sc_{x_1,x_2}=Sc_{x_1}+Sc_{x_2}$ for the cartesian product $(X_1\times X_2,\mathcal G_1\oplus\mathcal G_2)$
of two Riemannian manifolds $(X_1,\mathcal G_1)$ and $(X_2,\mathcal G_2)$ and the fact that while $Sc_N$ is bounded $Sc_{S^2}$ can be chosen
to be arbitrary large. Clearly, the projection $p:\Wi M=\Wi N\times S^2\to\Wi N$ is a proper uniformly cobounded map to a $(n-2)$-dimensional manifold
which can be triangulated.
Hence, $\dim_{mc}\Wi M\le n-2$.

Since $\dim_{mc}X=0$ for every compact metric space, the
Gromov Conjecture holds trivially for  manifolds with finite fundamental groups. Therefore, Gromov's conjecture  is about manifolds with infinite fundamental groups.
We note that even a weaker version of the Gromov Conjecture, that is $\dim_{mc}\Wi M\leq n-1$, for positive scalar curvature manifolds looks out of reach. This is because it implies the Gromov-Lawson's Conjecture: {\em A closed aspherical manifold cannot carry a metric of positive scalar curvature}.
The latter is known as a twin sister of the famous Novikov Higher Signature conjecture. The real analytic version of the Novikov conjecture is called the Strong Novikov 
Conjecture. It is a conjecture for discrete groups $\pi$: {\em The analytic assembly map $\alpha: KO_*(B\pi) \to KO_*(C^*(\pi))$ is a monomorphism, where $C^*(\pi)$ is reduced $C^*$-algebra of $\pi$.}
We note that the Strong Novikov conjecture for a group $\pi$ implies the Higher Signature conjecture for all manifolds with the fundamental group $\pi$.

The (Strong) Novikov's conjecture is  proven for large classes of groups (see the survey~\cite{Yu}). 
The Gromov Conjecture is proven  in far fewer cases 
\cite{Bo1},\cite{Bo3}, \cite{Dr1},\cite{Dr2},
\cite{BD1},\cite{BD2},\cite{DD}. In view of the above connection between the Gromov and the Novikov conjectures, it makes sense to state the Gromov Conjecture 
for groups instead of manifolds as the Gromov Conjecture for all positive scalar curvature manifolds with a given fundamental group $\pi$. Additionally, to make Gromov's conjecture more accessible we consider it in this paper for spin manifolds  with the fundamental group $\pi$ satisfying the Strong Novikov Conjecture. 
In this paper we proved the inequality  $\dim_{mc}\Wi M\leq n-1$ for spin manifolds with the fundamental group $\Gamma$ that satisfies the Strong Novikov Conjecture and has its integral homology $H_*(\Gamma)$ torsion free (Corollary~\ref{1 step}).

Gromov defined {\em inessential manifolds} $M$ as those for which a classifying map $u:M\to B\Gamma$ of the universal covering $\Wi M$ can be deformed to the $(n-1)$-skeleton $B\Gamma^{(n-1)}$ where $n=\dim M$. Clearly, for an inessential $n$-manifold $M$  we have $\dim_{mc}\Wi M\leq n-1$.

We call an $n$-manifold {\em strongly inessential} if a classifying map of its universal covering $u:M\to B\Gamma$ can be deformed to the $(n-2)$-skeleton.
Since for strongly inessential $n$-manifolds $\dim_{mc}\Wi M\le n-2$, the following conjecture implies the Gromov's conjecture.

{\bf Strong Gromov's Conjecture.} {\it  A closed  positive scalar curvature  manifold $M$ is strongly inessential.}

We note that for a finite cover $M'\to M$ of a PSC manifold $M$ the manifold $M'$ has PSC and the universal cover of $M'$ coincides with the universal cover of $M$, 
Therefore, the Strong Gromov Conjecture for a group $\Gamma'$  implies the original Gromov Conjecture
for all groups $\Gamma$ containing $\Gamma'$ as a finite index subgroup.

Dmitry Bolotov showed~\cite{Bo2} that there are inessential manifolds which are not strongly inessential.

\

In this paper we prove the Strong Gromov Conjecture for spin $n$-manifolds with $n>4$, for some classes of groups, in particular, for right-angled Artin groups (RAAGs).

\section {Preliminaries}

We recall that for
every spectrum $E$ there is a connective cover $e \to E$, that is  a spectrum $e$ with the
morphism $ e\to  E$ that induces isomorphisms of homotopy  groups $\pi_i(e) \to \pi_i(E)$ for $i\ge 0$ and
with $\pi_i(e) = 0$ for $i < 0$ . By $KO$ we denote the spectrum for real K-theory, by
$ko$ its connective cover, and by $per : ko\to KO$ the corresponding 
morphism of spectra. For more details we refer to ~\cite{Ru}. We use the standard notation $\pi^s_*$ for the stable homotopy groups.
We use
both notations for an $E$-homology of a space $X$: old-fashioned
$E_*(X)$ and modern $ H_*(X;E)$. We recall that $KO_n(pt)=\Z$ if
$n=0$ or $n=4$ mod 8, $KO_n(pt)=\Z_2$ if $n=1$ or $n=2$ mod 8, and
$KO_n(pt)=0$ for all other values of $n$. By $\mathbb S$ we denote the
spherical spectrum. Note that for any ring spectrum $E$ there is a
natural morphism $\mathbb S\to E$ which leads to the natural transformation
of the stable homotopy to $E$-homology $\pi^s_*(X)\to H_*(X;E)$.

Let $M(\Z_m,n)=S^n\cup_\phi D^{n+1}$ denote the Moore space where $\phi:S^n\to S^n$ is a map of degree $m$.
\begin{prop}\label{moor}
The homomorphism $$per: ko_i(M(\Z_m,n))\to KO_i(M(\Z_m,n))$$ is an isomorphism for $i\ge n$.
\end{prop}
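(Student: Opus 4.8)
The plan is to reduce the claim to a computation of the $ko$- and $KO$-homology of the Moore space using the cofibration sequence that defines $M(\Z_m,n)$, together with the known homotopy groups of the connective and periodic real $K$-theory spectra. First I would set up the cofiber sequence $S^n \xrightarrow{\phi} S^n \to M(\Z_m,n) \to S^{n+1} \xrightarrow{\phi} S^{n+1}$ and apply both homology theories $ko_*(-)$ and $KO_*(-)$ to get a commuting ladder of long exact sequences, with the vertical maps induced by $per\colon ko\to KO$. Because $per$ induces isomorphisms $\pi_j(ko)\to\pi_j(KO)$ for $j\ge 0$, i.e. $ko_j(pt)\to KO_j(pt)$ is an isomorphism for $j\ge 0$ and $ko_j(pt)=0$ for $j<0$ while $KO_j(pt)$ is $8$-periodic, the only discrepancy between the two sides lives in negative degrees of the coefficient groups.

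Concretely, for a sphere $S^k$ one has $ko_i(S^k)=ko_{i-k}(pt)$ and $KO_i(S^k)=KO_{i-k}(pt)$, so $per\colon ko_i(S^k)\to KO_i(S^k)$ is an isomorphism whenever $i\ge k$ and is the zero map $0\to KO_{i-k}(pt)$ when $i<k$. Now fix $i\ge n$. In the ladder, the Moore space $M=M(\Z_m,n)$ sits between the $S^n$'s (contributing in degrees $\ge n$) and the $S^{n+1}$'s (contributing in degrees $\ge n+1$). For $i\ge n+1$ all four sphere-terms adjacent to $ko_i(M)$ and $KO_i(M)$ in the two long exact sequences are hit by $per$-isomorphisms, so the five lemma gives that $per\colon ko_i(M)\to KO_i(M)$ is an isomorphism. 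The one remaining case is $i=n$: here I would use that $ko_{n-1}(S^n)=ko_{-1}(pt)=0$, so the relevant portion of the $ko$-sequence reads $ko_n(S^n)\xrightarrow{\phi_*} ko_n(S^n)\to ko_n(M)\to 0$, identifying $ko_n(M)=\Z/m$ (here $\phi_*$ is multiplication by $m$ on $ko_n(S^n)=\Z$); correspondingly $KO_n(M)$ fits in $KO_n(S^n)\xrightarrow{\cdot m}KO_n(S^n)\to KO_n(M)\to KO_{n-1}(S^n)\xrightarrow{\cdot m}KO_{n-1}(S^n)$, and one checks directly from the periodic coefficients $KO_n(pt),KO_{n-1}(pt)$ (a short case analysis on $n \bmod 8$) that $per$ carries the former short exact sequence isomorphically onto the relevant sub-quotient of the latter. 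A cleaner way to package the $i=n$ case: $ko\to KO$ is an iso on $\pi_j$ for $j\ge 0$, hence $per$ is an iso on $H_j(X;-)$ for any space $X$ that is $(n-1)$-connected through dimension... — more simply, $\mathrm{cofib}(per)$ has homotopy concentrated in negative degrees, so $H_i(M;\mathrm{cofib}(per))$ is computed from an Atiyah–Hirzebruch spectral sequence whose $E_2$-terms $H_p(M;\pi_q\,\mathrm{cofib}(per))$ vanish for $p+q\ge n$ because $M$ has cells only in dimensions $n$ and $n+1$ while $\pi_q\,\mathrm{cofib}(per)=0$ for $q\ge -1$; thus $ko_i(M)\to KO_i(M)$ is an isomorphism for $i\ge n$.

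I would probably present the argument via this last route, since it is the most robust: the cofiber spectrum $C$ of $per\colon ko\to KO$ has $\pi_q(C)=0$ for $q\ge -1$, so in the Atiyah–Hirzebruch spectral sequence $E^2_{p,q}=H_p(M;\pi_q C)\Rightarrow H_{p+q}(M;C)$ the only nonzero columns are $p\in\{n,n+1\}$ and the only nonzero rows are $q\le -2$, giving $H_r(M;C)=0$ for $r\ge n-1$; the long exact sequence $\cdots\to ko_i(M)\to KO_i(M)\to H_i(M;C)\to\cdots$ then yields that $per$ is an isomorphism on $H_i(M;-)$ for $i\ge n$. The main obstacle is simply bookkeeping the degree shifts correctly so that the boundary term $H_{n-1}(M;C)$ is genuinely zero — i.e. making sure the cofiber's connectivity lines up with the cell structure of $M$ exactly at the borderline degree $i=n$; away from that borderline the statement is soft.
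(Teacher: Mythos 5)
Your primary route---the Five Lemma applied to the ladder of $ko_*$ and $KO_*$ long exact sequences for the cofibration $S^n\to S^n\to M(\Z_m,n)$---is precisely the paper's proof; the only overcomplication is your proposed case analysis on $n\bmod 8$ for the borderline $i=n$, which the paper avoids by simply noting $KO_{-1}(pt)=KO_7(pt)=0$, so the rightmost vertical maps in the ladder are $0\to 0$ and the Five Lemma applies uniformly for all $i\ge n$. Your alternative packaging via the cofiber spectrum $C$ of $per\colon ko\to KO$ is also correct and amounts to the same computation in a different form (one harmless bookkeeping slip: from $\pi_q(C)=0$ for $q\ge -1$ and cells of $M$ only in degrees $n,n+1$, the AHSS gives $H_r(M;C)=0$ for $r\ge n$, not $r\ge n-1$, which still suffices since $per_i$ is an isomorphism whenever $C_i(M)$ and $C_{i+1}(M)$ vanish).
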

\begin{proof}
We apply the Five Lemma to the diagram with $M=M(\Z_m,n)$
$$
\begin{CD}
ko_i(S^n)@>m>> ko_i(S^n) @>>> ko_i(M) @>>> ko_{i-1}(S^n) @>m>> ko_{i-1}(S^n)\\
@V\cong VV @V\cong VV @VVV @V\cong VV @V\cong VV\\
KO_i(S^n)@>m>> KO_i(S^n) @>>> KO_i(M) @>>> KO_{i-1}(S^n) @>m>> KO_{i-1}(S^n)\\
\end{CD}
$$
to get the result. Note that for $i=n$ on the right there will be isomorphisms, since $KO_{-1}(pt)=0$.
\end{proof}

The following proposition is taken from~\cite{BD1}.

\begin{prop}\label{pi-ko-iso}
	The natural transformation $\pi_*^s(pt)\to ko_*(pt)$ induces an
	isomorphism $\pi_{n}^s(K/K^{(n-2)})\to ko_{n}(K/K^{(n-2)})$ for any CW
	complex $K$.
\end{prop}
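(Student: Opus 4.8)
The plan is to reduce the statement to known computations of the relevant coefficient groups and the classical fact that the natural transformation $\pi^s_* \to ko_*$ is an isomorphism in low degrees. The key point is that $K/K^{(n-2)}$ is $(n-2)$-connected, so its reduced homology is concentrated in degrees $\ge n-1$, and we are probing it in degree exactly $n$, i.e. only one or two degrees above connectivity. First I would set $X = K/K^{(n-2)}$ and record that $\tilde H_i(X) = 0$ for $i \le n-2$. Then I would run the Atiyah--Hirzebruch spectral sequences for $\pi^s_*(X)$ and for $ko_*(X)$ simultaneously, together with the map of spectral sequences induced by $\mathbb S \to ko$. Since the coefficient groups $\pi^s_k(pt)$ and $ko_k(pt)$ agree for $k = 0, 1, 2$ (both are $\Z, \Z_2, \Z_2$) and the comparison map $\pi^s_k(pt) \to ko_k(pt)$ is an isomorphism there, the $E^2$-pages agree along the relevant diagonal.

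Concretely, in computing the degree-$n$ group the only contributions to the associated graded come from $E^2_{p,q} = H_p(X; \pi^s_q(pt))$ (resp. with $ko_q(pt)$) with $p + q = n$ and $p \ge n-1$, hence $q \le 1$. Thus the only relevant entries are $E^2_{n,0}$, $E^2_{n-1,1}$, and the entries in filtration degree $n+1$ that could support differentials hitting these — but those lie in columns $p \ge n-1$ as well, so again $q \le 2$. On all these entries the map $\mathbb S \to ko$ induces an isomorphism on coefficients ($\pi^s_0 \cong ko_0 = \Z$, $\pi^s_1 \cong ko_1 = \Z_2$, $\pi^s_2 \cong ko_2 = \Z_2$). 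Therefore the comparison map is an isomorphism of $E^2$-entries in total degree $n$ and $n\pm 1$ in the pertinent range, and by naturality the differentials correspond. An argument with the five lemma applied degreewise along the filtration then shows $\pi^s_n(X) \to ko_n(X)$ is an isomorphism.

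The main obstacle — really the only thing requiring care — is bookkeeping the edge cases near total degree $n$: one must check that no differential into or out of the filtration-$n$ and filtration-$(n-1)$ summands can involve a coefficient group where $\pi^s_*(pt) \to ko_*(pt)$ fails to be an isomorphism (this first happens in degree $3$, where $\pi^s_3(pt) = \Z_{24}$ but $ko_3(pt) = 0$). Because the source $X$ is $(n-2)$-connected, every differential touching these summands stays in columns $p \ge n-1$, forcing the coefficient degree to be $\le 2$, which keeps us safely inside the isomorphism range; I would make this precise by drawing the staircase of the spectral sequence in the corner $p + q \in \{n-1, n, n+1\}$, $p \ge n-1$. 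Once that is checked, the five-lemma comparison is routine, exactly as in Proposition~\ref{moor}.
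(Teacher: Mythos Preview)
The paper does not actually prove this proposition; it simply quotes it from \cite{BD1}. So there is no in-paper argument to compare against. That said, your Atiyah--Hirzebruch comparison argument is correct and is the standard way to establish such a statement. Your bookkeeping checks out: since $X=K/K^{(n-2)}$ is $(n-2)$-connected, the only nonzero $E^2$-entries on the diagonal $p+q=n$ are $(n,0)$ and $(n-1,1)$; there are no differentials out of these (the targets have $p\le n-2$), and the only incoming differential is $d_2\colon E^2_{n+1,0}\to E^2_{n-1,1}$. All three entries involved have $q\le 1$, where $\pi^s_q(pt)\to ko_q(pt)$ is an isomorphism, so the comparison map induces isomorphisms on $E^\infty_{n,0}$ and $E^\infty_{n-1,1}$, and one application of the five lemma to the resulting two-step filtration finishes the proof. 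Your mention of $q=2$ is harmless but not actually needed: the entry $(n-1,2)$ supports no differential into the diagonal $p+q=n$.

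A slightly cleaner packaging, which avoids tracking differentials entirely, is to let $C$ be the cofiber of the unit map $\mathbb S\to ko$. Since $\pi^s_i(pt)\to ko_i(pt)$ is an isomorphism for $i\le 2$ and a surjection for $i=3$, the spectrum $C$ is $3$-connected. For the $(n-2)$-connected space $X$ the Atiyah--Hirzebruch $E^2$-page for $C_*$ vanishes on the diagonals $p+q=n$ and $p+q=n+1$ (every such entry has either $p\le n-2$ or $q\le 3$), so $C_n(X)=C_{n+1}(X)=0$, and the long exact sequence associated to $\mathbb S\to ko\to C$ gives the desired isomorphism $\pi^s_n(X)\cong ko_n(X)$.
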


We use the following  well-known fact (see 4C, \cite{Ha}).
\begin{prop}\label{moore} Let $X$ be an $(n-1)$-connected $(n+1)$-dimensional CW complex.
Then $X$ is homotopy equivalent to the wedge of spheres of dimensions $n$ and $n+1$
together with the Moore spaces $M(\Z_m, n)$.
\end{prop}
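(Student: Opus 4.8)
The plan is to reconstruct $X$ up to homotopy from its homology. We take $n\ge 2$, which is the relevant range and is needed for the statement (for $n=1$ a closed surface of positive genus is a counterexample), so that $X$ is simply connected. First I would record the homological data: since $X$ is $(n-1)$-connected, the Hurewicz theorem gives $\widetilde H_i(X)=0$ for $i<n$ and $\pi_n(X)\cong H_n(X)$; since $\dim X=n+1$ we have $H_i(X)=0$ for $i>n+1$, and $H_{n+1}(X)$, being a subgroup of the free abelian group of cellular $(n+1)$-chains, is free abelian. Thus the reduced homology of $X$ is concentrated in degrees $n$ and $n+1$, with the top group free, and the goal is to produce a map from an explicit wedge of $n$-spheres, $(n+1)$-spheres and Moore spaces $M(\Z_m,n)$ into $X$ inducing an isomorphism on homology, after which the Whitehead theorem finishes the job.

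The first step is to build a Moore space $P=M(H_n(X),n)$ together with a map $f\colon P\to X$ that is an isomorphism on $H_i$ for all $i\le n$. Choose a presentation of the abelian group $H_n(X)$ by generators and relations. Using the Hurewicz isomorphism $\pi_n(X)\cong H_n(X)$, the generators give a map $\bigvee_\alpha S^n_\alpha\to X$ that is onto on $\pi_n=H_n$. Since $n\ge 2$, $\pi_n$ of a wedge of $n$-spheres is the free abelian group on the wedge summands, so each relation is represented by an honest map $S^n\to\bigvee_\alpha S^n_\alpha$ whose composite into $X$ is null-homotopic; attaching $(n+1)$-cells along these maps and extending $f$ over them yields $P=M(H_n(X),n)$ with $f_*$ an isomorphism through degree $n$.

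Next I would kill $H_{n+1}$. Turning $f$ into the inclusion of $P$ into its mapping cylinder, the long exact sequence of the pair $(X,P)$, together with $H_{n+1}(P)=0$ and the previous step, shows that $(X,P)$ is $n$-connected and that $H_{n+1}(X,P)\cong H_{n+1}(X)$ is free abelian with the boundary homomorphism $H_{n+1}(X,P)\to H_n(P)$ trivial. Since $P$ is simply connected, the relative Hurewicz theorem gives $\pi_{n+1}(X,P)\cong H_{n+1}(X,P)$; choosing a basis yields maps $(D^{n+1},S^n)\to(X,P)$ whose boundary maps $S^n\to P$ represent the zero class in $\pi_n(P)\cong H_n(P)$, hence are null-homotopic. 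Attaching the corresponding $(n+1)$-cells therefore enlarges $P$ to $P\vee\bigl(\bigvee_\beta S^{n+1}_\beta\bigr)$, and extending $f$ over these cells via the relative classes produces a map $g\colon P\vee\bigl(\bigvee_\beta S^{n+1}_\beta\bigr)\to X$ that is now an isomorphism on all homology (unchanged through degree $n$, and an isomorphism in degree $n+1$ by construction). Both sides being simply connected CW complexes, the Whitehead theorem makes $g$ a homotopy equivalence. Finally, decomposing $H_n(X)$ into cyclic summands and using $M\bigl(\bigoplus_\gamma C_\gamma,\,n\bigr)\simeq\bigvee_\gamma M(C_\gamma,n)$ with $M(\Z,n)=S^n$ identifies $P$ with a wedge of $n$-spheres and spaces $M(\Z_m,n)$, giving the asserted description of $X$.

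The step I expect to be the main obstacle is the second one: one must verify that the attaching maps introduced there are genuinely null-homotopic, not merely null-homologous — this is precisely where simple-connectivity of $P$ and the relative Hurewicz theorem are essential — and one must arrange the extensions of $f$ over the two successive families of cells compatibly. One should also allow $H_n(X)$ and $H_{n+1}(X)$ to be infinitely generated, so the wedges above may be infinite; this causes no difficulty, since a Moore space of a direct sum is the wedge of the corresponding Moore spaces and the Whitehead theorem holds for arbitrary CW complexes.
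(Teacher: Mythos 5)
Your proof is correct and is the standard homology-decomposition argument; the paper gives no proof of its own and simply cites Hatcher, Section 4.C, which is exactly this construction. One small point of care in the first step: when building $P = M(H_n(X),n)$ you should take the ``relations'' to be a free resolution $0 \to R \to F \to H_n(X) \to 0$ (i.e., $R = \Ker(F\to H_n(X))$), so that the cellular boundary $C_{n+1}(P) \to C_n(P)$ is injective and $H_{n+1}(P)=0$ --- a presentation with redundant relations would not give a Moore space.
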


\

Next, we recall the following classical result (see~Corollary 6.10.3, \cite{TtD}):
\begin{thm}\label{h-excision}
	Suppose that a CW-complex pair $(X,A)$ satisfies the conditions $\pi_i(X)=0$ for $i\le m$ and $\pi_i(A)=0$ for $i\le m-1$ with $m\ge 2$. 
Then the quotient map $q:(X,A)\to (X/A,\ast)$
	induces isomorphisms $q_*:\pi_i(X,A)\to\pi_i(X/A)$ for $i\le 2m-1$.
\end{thm}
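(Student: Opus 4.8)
The plan is to trace this statement back to the Blakers--Massey homotopy excision theorem. The first step is to model the quotient $X/A$ by the mapping cone $Y=X\cup_A CA$ of the inclusion $A\hookrightarrow X$. Since $(X,A)$ is a CW pair, $CA\hookrightarrow Y$ is a cofibration with contractible total space $CA$, so collapsing $CA$ gives a homotopy equivalence $Y\to Y/CA=X/A$; moreover, contractibility of $CA$ makes the long exact sequence of $(Y,CA)$ yield $\pi_i(Y,CA)\cong\pi_i(Y)$ for $i\ge 1$. Since restricting the collapse $Y\to Y/CA$ to $X\subset Y$ recovers the quotient $X\to X/A$, the commutative diagram of pairs $(X,A)\hookrightarrow(Y,CA)\to(Y/CA,\ast)=(X/A,\ast)$ identifies $q_*\colon\pi_i(X,A)\to\pi_i(X/A)$ with the composite $\pi_i(X,A)\to\pi_i(Y,CA)\cong\pi_i(Y)\cong\pi_i(X/A)$, whose first arrow is induced by inclusion. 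Thus it suffices to prove that the inclusion-induced map $\pi_i(X,A)\to\pi_i(Y,CA)$ is an isomorphism for $i\le 2m-1$.

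The second step is connectivity bookkeeping. From the long exact sequence of $(X,A)$, the hypotheses $\pi_i(X)=0$ for $i\le m$ and $\pi_i(A)=0$ for $i\le m-1$ give $\pi_i(X,A)=0$ for $i\le m$; and since $CA$ is contractible, $\pi_i(CA,A)\cong\pi_{i-1}(A)=0$ for $i\le m$. Hence both $(X,A)$ and $(CA,A)$ are $m$-connected, and $(Y;X,CA)$ is an excisive CW triad with $X\cap CA=A$. The assumption $m\ge 2$ ensures $X$ and $A$ are simply connected, so there are no fundamental-group subtleties in what follows.

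The third step is to apply Blakers--Massey to the triad $(Y;X,CA)$: the inclusion-induced map $\pi_i(X,A)\to\pi_i(Y,CA)$ is an isomorphism for $i<m+m=2m$ and an epimorphism for $i=2m$. In particular it is an isomorphism for $i\le 2m-1$, which together with the identifications above completes the argument.

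The only genuinely deep ingredient is Blakers--Massey itself; everything else is a diagram chase with long exact sequences together with the cofibration property of CW pairs. As the paper treats homotopy excision as classical (tom Dieck, Cor.~6.10.3), I would simply invoke it rather than reprove it, the reduction above being the content. The point to watch is the indexing convention: taking ``$m$-connected'' to mean $\pi_i=0$ for $i\le m$ yields the bound $i\le 2m-1$ exactly as stated, while the convention $\pi_i=0$ for $i<m$ shifts the range, so the two must be reconciled when citing the excision theorem.
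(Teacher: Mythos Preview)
Your argument is correct: the reduction to the excisive triad $(Y;X,CA)$ with $Y=X\cup_A CA$ is clean, the connectivity bookkeeping is accurate (both $(X,A)$ and $(CA,A)$ are $m$-connected under the stated hypotheses), and Blakers--Massey then gives the isomorphism in the claimed range $i\le 2m-1$. The identification of $q_*$ with the composite through $\pi_i(Y,CA)$ is also handled properly via the cofibration property of CW pairs.

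As for comparison: the paper does not prove this statement at all. It is quoted as a classical result with a reference to tom Dieck, \emph{Algebraic Topology}, Corollary~6.10.3, and is used only as a tool in the proof of Lemma~\ref{2-obstruction}. Your derivation from Blakers--Massey is exactly the standard route (and is essentially what one finds in tom Dieck), so there is nothing to contrast. Your closing caveat about indexing conventions is well taken and is the only place one could slip up when transcribing the statement from a source.
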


\subsection{Inessential manifolds}

Note that for an inessential $n$-manifold $M$ we have $\dim_{mc}\Wi M\le n-1$. Indeed, a lift $\Wi{u_M}:\Wi M\to E\Gamma^{(n-1)}$ of a classifying
map is a uniformly cobounded proper map to an $(n-1)$-complex.

The first step in any  proof of the Strong Gromov Conjecture would be establishing the  inessentiality of a positive scalar curvature  manifold.  
We recall that the inessentiality of a manifold can be characterized as follows~\cite{Ba} (see also~\cite{BD1}, Proposition 3.2). 
\begin{thm}\label{ref} Let $M$ be a closed oriented $n$-manifold. 
Then the following are equivalent:

1. $M$ is inessential;

2. $(u_M)_*([M])=0$ in $H_n(B\Gamma)$ where $[M]$ is the fundamental class of $M$ and $\Gamma=\pi_1(M)$.  
\end{thm}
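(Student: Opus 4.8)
The plan is to prove the two implications separately, using obstruction theory for $(1)\Rightarrow(2)$ and a Hurewicz/Poincar\'e-duality argument for $(2)\Rightarrow(1)$. For $(1)\Rightarrow(2)$: if $u_M$ deforms to the $(n-1)$-skeleton $B\Gamma^{(n-1)}$, then $(u_M)_*([M])$ lands in the image of $H_n(B\Gamma^{(n-1)})\to H_n(B\Gamma)$. But the inclusion $B\Gamma^{(n-1)}\hookrightarrow B\Gamma^{(n)}$ induces a surjection on $H_n$, and the relevant fact is that a class supported on the $(n-1)$-skeleton is a boundary once we attach the $n$-cells; more precisely, from the cellular chain complex $C_n(B\Gamma)\to C_{n-1}(B\Gamma)$ one sees that $H_n(B\Gamma^{(n-1)})=\ker(\partial_{n-1})\supseteq$ nothing new, so the composite $H_n(B\Gamma^{(n-1)})\to H_n(B\Gamma)$ is the zero map. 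Hence $(u_M)_*([M])=0$.

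For the converse $(2)\Rightarrow(1)$: we may assume $B\Gamma$ has $\pi_1=\Gamma$ and is aspherical, and we want to deform $u_M\colon M\to B\Gamma$ off the top cell. Replace $M$ by its $n$-skeleton in a CW structure (the top cell contributes nothing to a map into an aspherical target beyond what homology sees, after a preliminary homotopy we may assume $u_M$ is cellular). Consider the CW pair $(M, M^{(n-1)})$, where $M/M^{(n-1)}\simeq S^n$ by orientability, and analyze the obstruction to compressing $u_M|$ into $B\Gamma^{(n-1)}$. Since $\dim M=n$, the only obstruction to homotoping $u_M$ into $B\Gamma^{(n-1)}$ lives in $H^n(M;\pi_{n-1}(B\Gamma^{(n)},B\Gamma^{(n-1)}))$; by Poincar\'e duality this group is $H_0(M;\pi_{n-1}(\cdots))\cong \pi_{n-1}(B\Gamma^{(n)},B\Gamma^{(n-1)})$, and the obstruction class is the image of the fundamental class, i.e. it is detected by pushing $[M]$ forward. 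Tracking through the Hurewicz map and the cellular chain description of $H_n(B\Gamma)$, this obstruction vanishes precisely when $(u_M)_*([M])=0$ in $H_n(B\Gamma)$. (This is the standard fact that the primary and only obstruction to deforming a map from an orientable $n$-manifold into the $(n-1)$-skeleton of an aspherical complex is $(u_M)_*[M]$; one can also phrase it via $\eta\in H^n(B\Gamma;H_n(B\Gamma))$ pulling back to the fundamental cohomology class.)

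I would also record the clean packaging used in~\cite{BD1}, Proposition 3.2: consider the cofiber sequence $M^{(n-1)}\to M\to S^n$ and the induced map of the Atiyah--Hirzebruch-style exact sequence, or simply note that $u_M$ factors through $B\Gamma^{(n-1)}$ up to homotopy iff the composite $M\xrightarrow{u_M} B\Gamma \to B\Gamma/B\Gamma^{(n-1)}$ is null-homotopic; since $(B\Gamma,B\Gamma^{(n-1)})$ is $(n-1)$-connected, $[M,B\Gamma/B\Gamma^{(n-1)}]$ is computed by the first nonvanishing cohomology and equals $\Hom$ of the top cellular chains, which detects exactly $(u_M)_*[M]$.

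The main obstacle I expect is the $(2)\Rightarrow(1)$ direction: one must argue that the \emph{higher} obstructions to the compression all vanish automatically, which here is for dimension reasons (the source has dimension $n$ and we are pushing into the $(n-1)$-skeleton, so there is a single obstruction in degree $n$) — but one has to be careful that obstruction theory applies, i.e. that after a preliminary homotopy $u_M$ is cellular and that $B\Gamma^{(n)}\hookrightarrow B\Gamma$ is an $n$-equivalence so that compressing into $B\Gamma^{(n-1)}\subset B\Gamma^{(n)}$ is the same as compressing into $B\Gamma^{(n-1)}\subset B\Gamma$. The identification of the obstruction class with $(u_M)_*[M]$ via Poincar\'e duality is the technical heart, and it is exactly where orientability of $M$ is used. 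Since this is a known result (\cite{Ba}, \cite{BD1}), the write-up can be brief and cite those sources for the detailed obstruction-theoretic bookkeeping.
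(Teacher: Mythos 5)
The paper does not give its own proof of Theorem~\ref{ref}; it is quoted from Babenko~\cite{Ba} and from \cite{BD1}, Proposition~3.2, and your closing remark --- cite those sources for the obstruction-theoretic bookkeeping --- is exactly what the paper does. So there is nothing to compare against; I will instead comment on the two arguments you sketch.

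For $(1)\Rightarrow(2)$ your reasoning is more tangled than it needs to be, and one intermediate claim is false: the inclusion $B\Gamma^{(n-1)}\hookrightarrow B\Gamma^{(n)}$ does \emph{not} in general induce a surjection on $H_n$ (the source group is zero, so surjectivity would force $H_n(B\Gamma^{(n)})=0$). The point is simply that $B\Gamma^{(n-1)}$ is an $(n-1)$-dimensional CW complex, so $H_n(B\Gamma^{(n-1)})=0$ outright, and hence anything factoring through it dies in $H_n(B\Gamma)$. Nothing about boundaries of $n$-cells is needed.

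For $(2)\Rightarrow(1)$ your first (local-coefficient) version has two imprecisions. The coefficient group for the unique obstruction is $\pi_n(B\Gamma,B\Gamma^{(n-1)})$, i.e.\ $\pi_{n-1}$ of the homotopy fiber; you wrote $\pi_{n-1}$ of the pair, which is $0$ by $(n-1)$-connectedness. And Poincar\'e duality gives $H^n(M;A)\cong H_0(M;A)=A_\Gamma$, the \emph{coinvariants} of the $\pi_1$-module $A$, not $A$ itself; the paper is careful about this in its Lemma~\ref{2-obstruction}, and it matters because $\pi_n(B\Gamma,B\Gamma^{(n-1)})$ is a nontrivial $\Z\Gamma$-module. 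The clean route you sketch in your third paragraph is preferable and sidesteps local coefficients entirely: $u_M$ compresses iff $q\circ u_M:M\to B\Gamma/B\Gamma^{(n-1)}$ is nullhomotopic; since the target is $(n-1)$-connected and $M$ is an $n$-complex, $[M,\,B\Gamma/B\Gamma^{(n-1)}]$ is classified by $H^n(M;H_n(B\Gamma/B\Gamma^{(n-1)}))\cong H_n(B\Gamma/B\Gamma^{(n-1)})$ via Poincar\'e duality and Hurewicz, the class of $q\circ u_M$ being $q_*(u_M)_*[M]$; and $q_*:H_n(B\Gamma)\to H_n(B\Gamma/B\Gamma^{(n-1)})$ is injective because $H_n(B\Gamma^{(n-1)})=0$. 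You should spell out this last injectivity step rather than leaving it at ``which detects exactly $(u_M)_*[M]$'': it is the hinge that converts vanishing in the quotient into vanishing in $H_n(B\Gamma)$.
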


Gromov calls an orientable closed $n$-manifold $M$ {\em rationally inessential} if  $(u_M)_*([M])=0$ in $H_n(B\Gamma;\Q)$.

It is known that for a manifold to be spin is equivalent to be orientable  for any of  the K-theories: complex $KU$, real $KO$, or  their connective covers $ku$ or $ko$ ~\cite{Ru}.
Suppose that a closed $n$-manifold $M$ is orientable with respect to a generalized homology theory $E$. We call the manifold $M$ to be {\em  $E$-inessential} if
it is $E$-orientable and  $u_*([M]_E)=0$ where $[M]_E\in H_n(M;E)$
is the $E$-fundamental class. Note that this property does not depend on the choice of  $E$-orientation.

We recall the Rosenberg index theorem~\cite{R1}:
\begin{thm}\label{Ros}
Suppose that the fundamental group $\Gamma$ of a positive scalar curvature spin manifold $M$ satisfies the Strong Novikov Conjecture. Then $M$ is $KO$-inessential.
\end{thm}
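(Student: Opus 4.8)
The plan is to combine the Rosenberg higher index with the injectivity of the analytic assembly map furnished by the hypothesis. Since $M$ is spin it is $KO$-orientable, so the $KO$-fundamental class $[M]_{KO}\in KO_n(M)$ is defined, and the assertion that $M$ is $KO$-inessential means exactly that $(u_M)_*([M]_{KO})=0$ in $KO_n(B\Gamma)$. First I would recall the analytic assembly map
\[
\alpha\colon KO_n(B\Gamma)\To KO_n(C^*(\Gamma))
\]
and put $\ind(D_M)=\alpha\big((u_M)_*([M]_{KO})\big)$, the Rosenberg index of $M$. The essential input here, due to Rosenberg and resting on the Mishchenko--Fomenko index theorem, is that $\ind(D_M)$ coincides with the higher index of the Clifford-linear Dirac operator $D_{\mathcal{L}}$ on $M$ twisted by the flat bundle $\mathcal{L}=\Wi M\times_\Gamma C^*(\Gamma)$ of finitely generated projective Hilbert $C^*(\Gamma)$-modules; equivalently $\alpha$ is given by Kasparov product with the $K$-homology class of $\mathcal{L}$, and the Mishchenko--Fomenko theorem evaluates that product on the fundamental class as the analytic index of $D_{\mathcal{L}}$.

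Second, I would apply the Lichnerowicz--Schr\"odinger--Weitzenb\"ock formula to the twisted operator. Because the connection on $\mathcal{L}$ is flat, it contributes no curvature term, and one obtains $D_{\mathcal{L}}^2=\nabla^*\nabla+\tfrac14\,Sc_M$. If $Sc_M>0$ this operator is uniformly positive, so $D_{\mathcal{L}}$ is invertible on the Hilbert-module Sobolev completions; an invertible elliptic operator over a $C^*$-algebra gives a degenerate Kasparov module, so its index class vanishes, i.e. $\ind(D_M)=0$ in $KO_n(C^*(\Gamma))$. (A minor point: one works over the reduced $C^*$-algebra to match the statement of the Strong Novikov Conjecture, but invertibility of $D_{\mathcal{L}}$ does not depend on which completion of $\Z[\Gamma]$ is used, so the argument is unaffected.) Combining the two steps, $\alpha\big((u_M)_*([M]_{KO})\big)=0$.

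Finally, the Strong Novikov Conjecture for $\Gamma$ is precisely the assertion that $\alpha$ is a monomorphism; applying injectivity in degree $n$ to the relation $\alpha\big((u_M)_*([M]_{KO})\big)=0$ yields $(u_M)_*([M]_{KO})=0$, which is the definition of $M$ being $KO$-inessential.

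The only substantive step is the first one: identifying the topologically defined class $\alpha\big((u_M)_*([M]_{KO})\big)$ with the analytic index of the Mishchenko--Fomenko-twisted Dirac operator, since this is what bridges the homotopy-theoretic side with the Lichnerowicz estimate. The vanishing in the presence of positive scalar curvature and the concluding deduction are then formal. In an exposition one would simply invoke Rosenberg's original work~\cite{R1} for the index identification rather than reproduce it.
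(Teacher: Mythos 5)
Your argument is correct and is the standard proof of Rosenberg's index theorem. The paper itself does not reprove this statement: it simply recalls it as the Rosenberg index theorem with a citation to~\cite{R1}. Your outline — forming the Rosenberg index $\alpha\big((u_M)_*([M]_{KO})\big)$, identifying it via Mishchenko--Fomenko with the higher index of the Dirac operator twisted by the flat $C^*(\Gamma)$-module bundle, killing it with the Lichnerowicz--Weitzenb\"ock estimate under $Sc_M>0$, and then invoking injectivity of the assembly map — reproduces precisely the argument in the cited reference, so there is nothing to reconcile.
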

Note that in view of the Chern character isomorphism a $KO$-inessential manifold is rationally inessential.
As Marcinkowski's example shows~\cite{M}, the rational inessentiality of an $n$-manifold  does not imply the inequality $\dim_{mc}\Wi M \le n-1$. This makes difficult
the first step in the Gromov's conjecture even in the case of groups satisfying the Strong Novikov Conjecture. 

\begin{prop}[~\cite{BD1}, Proposition 3.3]\label{ko-in}
A $ko$-inessential manifold is inessential. 
\end{prop}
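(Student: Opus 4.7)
The plan is to exploit the natural spectrum map from $ko$ to ordinary integral homology and its compatibility with fundamental classes. Since $ko$ is connective with $\pi_0(ko)\cong\Z$, the $0$-th Postnikov section provides a natural morphism of ring spectra $\mu:ko\to H\Z$ that is an isomorphism on $\pi_0$. This induces a natural transformation $\mu_*:ko_n(X)\to H_n(X;\Z)$ for every space $X$.

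First I would record the key compatibility: for a spin (equivalently, $ko$-oriented) closed oriented $n$-manifold $M$, the transformation $\mu_*$ carries the $ko$-fundamental class to the integral fundamental class, i.e.\ $\mu_*([M]_{ko})=[M]$. This follows from the fact that the spin structure refines the orientation, and the $ko$-Thom class maps to the integral Thom class under $\mu$, so the corresponding Poincaré duality isomorphisms intertwine $\mu_*$ and the identity on the top class.

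Next, by naturality of $\mu$ applied to the classifying map $u_M:M\to B\Gamma$, we obtain a commutative square
$$
\begin{CD}
ko_n(M) @>(u_M)_*>> ko_n(B\Gamma)\\
@V\mu_* VV @VV\mu_* V\\
H_n(M;\Z) @>(u_M)_*>> H_n(B\Gamma;\Z).
\end{CD}
$$
If $M$ is $ko$-inessential, then $(u_M)_*([M]_{ko})=0$ in the upper right, so
$$(u_M)_*([M])=(u_M)_*\mu_*([M]_{ko})=\mu_*(u_M)_*([M]_{ko})=0$$
in $H_n(B\Gamma;\Z)$. By \theoref{ref} this is equivalent to $M$ being inessential.

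The only nontrivial step is the orientation-class compatibility $\mu_*([M]_{ko})=[M]$; everything else is formal naturality together with the characterization of inessentiality from \theoref{ref}. Once the compatibility is in hand, the argument is a one-line diagram chase.
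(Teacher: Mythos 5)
Your proof is correct, and it uses what is essentially the standard (and the paper's own, implicit) mechanism: the Postnikov truncation $ko\to H\Z$ is a map of ring spectra inducing an isomorphism on $\pi_0$, hence it carries the $ko$-fundamental class to the integral one (this can be seen either via Thom classes, as you say, or more directly from the local characterization of fundamental classes in $E_n(M,M\setminus\{x\})\cong E_0(\mathrm{pt})$), and then naturality plus Theorem~\ref{ref} finishes the argument. This is the same transformation $ko_*\to H_*(\,\cdot\,;\Z)$ the paper invokes in the proof of Proposition~\ref{KO-iness}, so the approach matches.
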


\begin{question}\label{Q0}
When a $KO$-inessential manifold is inessential ?
\end{question}
In view of Proposition~\ref{ko-in} this is true for manifolds with the fundamental group $\Gamma$ for which the map $per:ko_n(B\Gamma)\to KO_n(B\Gamma)$ is injective.\\

In view of the Rosenberg Index theorem, we obtain the following
\begin{prop}
Suppose that the fundamental group $\Gamma$ of a positive scalar curvature closed spin $n$-manifold $M$ has the following properties:

(1) It satisfies the Strong Novikov Conjecture;

(2) The  map $per:ko_n(B\Gamma)\to KO_n(B\Gamma)$ is injective.

Then $M$ is inessential.
\end{prop}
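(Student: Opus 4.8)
The plan is to chain together the three results already recorded in this section, with the naturality of the connective cover morphism $per$ as the only piece of glue that needs to be supplied. Write $u=u_M\colon M\to B\Gamma$ for a classifying map of the universal cover. First I would invoke the Rosenberg Index theorem (Theorem~\ref{Ros}): since $\Gamma$ satisfies the Strong Novikov Conjecture by (1) and $M$ is a closed spin manifold carrying a PSC metric, $M$ is $KO$-inessential, i.e.\ $u_*([M]_{KO})=0$ in $KO_n(B\Gamma)$.

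Next I would upgrade this to $ko$-inessentiality using hypothesis (2). The key point is that $per\colon ko\to KO$ is a natural transformation of homology theories which carries the $ko$-fundamental class of a spin manifold to its $KO$-fundamental class, the $ko$- and $KO$-orientations of a spin manifold being compatible via $per$ through the Atiyah--Bott--Shapiro construction. Naturality then yields the commutative square relating $ko$- and $KO$-homology of $M$ and of $B\Gamma$, and since $per([M]_{ko})=[M]_{KO}$ we obtain
\[
per\big(u_*([M]_{ko})\big)=u_*\big(per([M]_{ko})\big)=u_*([M]_{KO})=0
\]
in $KO_n(B\Gamma)$. Because $per\colon ko_n(B\Gamma)\to KO_n(B\Gamma)$ is injective by (2), this forces $u_*([M]_{ko})=0$, that is, $M$ is $ko$-inessential.

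Finally, Proposition~\ref{ko-in} asserts that a $ko$-inessential manifold is inessential, which completes the argument; equivalently, one may apply Theorem~\ref{ref} after noting $(u_M)_*([M])=0$ follows from $ko$-inessentiality via the natural transformation $ko_*\to H_*(\,\cdot\,;\Z)$ onto ordinary homology in top degree.

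I do not expect any real obstacle here: the statement is essentially a formal consequence of the Rosenberg Index theorem together with the injectivity hypothesis. The one point deserving a sentence of care is the identification $per([M]_{ko})=[M]_{KO}$ — the standard compatibility of spin orientations in connective and periodic real $K$-theory — together with the already-noted fact that $E$-inessentiality is independent of the chosen $E$-orientation, so that the conclusion of Theorem~\ref{Ros} may be read off for the particular $KO$-orientation that arises as $per$ of the $ko$-orientation.
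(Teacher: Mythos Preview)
Your proof is correct and follows exactly the argument the paper intends: the paper does not write out a proof but simply prefaces the proposition with ``In view of the Rosenberg Index theorem, we obtain the following,'' having just observed that injectivity of $per$ answers Question~\ref{Q0} affirmatively via Proposition~\ref{ko-in}. Your write-up merely makes explicit the chain Rosenberg $\Rightarrow$ $KO$-inessential $\Rightarrow$ $ko$-inessential (by injectivity of $per$ and $per([M]_{ko})=[M]_{KO}$) $\Rightarrow$ inessential (by Proposition~\ref{ko-in}), which is precisely the paper's reasoning.
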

The conditions (1) and (2 for all $n$) are known as the Rosenberg-Stolz conditions. For closed spin manifolds with the fundamental group satisfying Rosenberg-Stoltz conditions
there is a vanishing index criterion when they could carry a metric with positive scalar curvature~\cite{R2}. In other words the Gromov-Lawson-Rosenberg conjecture holds true for such groups.\\

\begin{prop}\label{torsion free}
Suppose that a group $\Gamma$ has torsion free integral homology. Then every $KO$-inessential manifold with the fundamental group $\Gamma$ is inessential.
\end{prop}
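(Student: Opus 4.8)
The plan is to pass to rational homology, where the $KO$-inessentiality already carries content, and then climb back to integral homology using the torsion-freeness of $H_*(\Gamma)$. Let $M$ be a $KO$-inessential closed $n$-manifold, put $\Gamma=\pi_1(M)$, and let $u=u_M:M\to B\Gamma$ be a classifying map for the universal cover $\Wi M$. Being $KO$-orientable, $M$ is spin, hence orientable; write $[M]\in H_n(M;\Z)$ for its fundamental class, $[M]_{KO}\in H_n(M;KO)$ for the $KO$-fundamental class, and $[M]_\Q\in H_n(M;\Q)$ for the image of $[M]$ under the coefficient homomorphism $\Z\to\Q$.

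By assumption $u_*([M]_{KO})=0$ in $KO_n(B\Gamma)$. Rationalizing and applying the Pontryagin (Chern) character isomorphism $KO_*(B\Gamma)\otimes\Q\cong\bigoplus_{k\in\Z}H_{*+4k}(B\Gamma;\Q)$ — which is natural and whose top-degree component carries $[M]_{KO}$ to a nonzero rational multiple of $[M]_\Q$ — one gets $u_*([M]_\Q)=0$ in $H_n(B\Gamma;\Q)$. In other words $M$ is rationally inessential; this is exactly the remark recorded after \theoref{Ros}, so I may invoke it directly rather than redo the character computation.

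Now I bring in the hypothesis. Since $H_*(\Gamma;\Z)=H_*(B\Gamma;\Z)$ is torsion free, $H_n(B\Gamma;\Z)$ is a torsion-free abelian group, so the coefficient-change homomorphism $j\colon H_n(B\Gamma;\Z)\to H_n(B\Gamma;\Q)$ is injective (its kernel is precisely the torsion subgroup of $H_n(B\Gamma;\Z)$). By naturality of $j$ we have $j\big(u_*([M])\big)=u_*([M]_\Q)=0$, hence $u_*([M])=0$ in $H_n(B\Gamma;\Z)$, and \theoref{ref} then gives that $M$ is inessential. There is no genuine obstacle here once the rational statement is granted; the one delicate point is the injectivity of $j$, which is exactly where torsion-freeness of $H_*(\Gamma)$ enters — and it cannot be dropped, since the argument exploits nothing about $KO$-inessentiality beyond rational inessentiality, and rational inessentiality alone does not force inessentiality, as Marcinkowski's example cited above shows.
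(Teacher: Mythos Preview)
Your proof is correct and follows essentially the same line as the paper's: from $KO$-inessentiality deduce rational inessentiality via the Chern character (the paper does this by first passing to $KU$, while you invoke the already-recorded remark after \theoref{Ros}), and then use torsion-freeness of $H_n(B\Gamma)$ to make the map $H_n(B\Gamma;\Z)\to H_n(B\Gamma;\Q)$ injective, forcing $u_*([M])=0$ integrally so that \theoref{ref} applies. The only cosmetic difference is that the paper writes out the Chern character diagram explicitly rather than citing the earlier remark.
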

\begin{proof}
Since the $KO$-fundamental class $[M^n]_{KO}$ maps under the natural transformation $KO\to KU$ to a $KU$-fundamental class, every $KO$-inessential manifold is $KU$-inessential. 
In view of the diagram of the homology Chern character isomorphisms generated by a degree one map $f:M^n\to S^n$
$$
\begin{CD}
K_n(M^n)\otimes\Q @>ch>\cong> \bigoplus_{i\in\Z} H_{n+2i}(M^n)\otimes\Q\\
@Vf_*\otimes 1_{\Q}VV @ Vf_*\otimes 1_{\Q}VV\\
K_n(S^n)\otimes 1_{\Q} @>ch>\cong> \bigoplus_{i \in\Z}H_{n+2i}(S^n)\otimes\Q\\
\end{CD}
$$
the image of a $KU$-fundamental class of $M^n$ under the Chern character has nontrivial $H_n(M^n)$ component.
Then the bottom arrow in the diagram
$$
\begin{CD}
H_n(M^n) @>u_*>> H_n(B\Gamma)\\
@VVV @VVV\\
H_n(M^n)\otimes \Q @>u_*\otimes 1_{\Q}>> H_n(B\Gamma)\otimes\Q\\
\end{CD}
$$
is  a zero homomorphism. The right vertical homomorphism is injective, since $H_n(B\Gamma)$ is torsion free.
Therefore, the top arrow is zero.
\end{proof}
\begin{cor}\label{1 step}
Suppose that the fundamental group $\Gamma$ of a positive scalar curvature spin manifold $M$ satisfies the Strong Novikov conjecture and all homology groups $H_*(\Gamma)$
are torsion free. Then $$\dim_{mc}\Wi M\le n-1.$$
\end{cor}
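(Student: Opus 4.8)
The plan is to obtain the corollary by concatenating results already proved in this section; no new argument is really needed. The hypotheses are precisely those of the Rosenberg index theorem together with the torsion-freeness condition, so the proof is a short chain of implications.

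First I would invoke Theorem~\ref{Ros}: since $M$ is a closed spin manifold of positive scalar curvature whose fundamental group $\Gamma$ satisfies the Strong Novikov Conjecture, $M$ is $KO$-inessential, i.e.\ $M$ is $KO$-orientable and $u_*([M]_{KO})=0$ in $H_n(B\Gamma;KO)$, where $u=u_M\colon M\to B\Gamma$ classifies the universal covering $\Wi M$.

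Next I would apply Proposition~\ref{torsion free}. Its hypothesis, that all homology groups $H_*(\Gamma)$ are torsion free (in particular that $H_n(B\Gamma)$ is torsion free), is exactly what is assumed here, and it upgrades the $KO$-inessentiality of $M$ to inessentiality. Equivalently, by Theorem~\ref{ref} we get $(u_M)_*([M])=0$ in $H_n(B\Gamma)$, so a classifying map $u_M\colon M\to B\Gamma$ of the universal covering can be deformed into the $(n-1)$-skeleton $B\Gamma^{(n-1)}$.

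Finally, as recorded just before Theorem~\ref{ref}, inessentiality of an $n$-manifold gives the macroscopic bound directly: a lift $\Wi{u_M}\colon \Wi M\to E\Gamma^{(n-1)}$ of the deformed classifying map is a proper, uniformly cobounded map to an $(n-1)$-dimensional simplicial complex, so $\dim_{mc}\Wi M\le n-1$. I do not expect any genuine obstacle here: all the substance lies in Theorem~\ref{Ros} and in the Chern-character argument behind Proposition~\ref{torsion free}, and the present statement is merely their composition together with the elementary passage from inessentiality to macroscopic dimension.
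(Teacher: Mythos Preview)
Your proof is correct and matches the paper's own argument essentially verbatim: the paper simply says ``We apply Rosenberg's theorem (Theorem~\ref{Ros}) and Proposition~\ref{torsion free},'' and you have spelled out exactly that chain together with the elementary passage from inessentiality to the macroscopic bound.
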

\begin{proof}
We apply Rosenberg's theorem (Theorem~\ref{Ros}) and Proposition~\ref{torsion free}.
\end{proof}

In~\cite{BD1} we proved the following addendum to Theorem~\ref{ref}.
\begin{prop}[\cite{BD1}, Lemma 3.5]\label{ref2}
For an inessential manifold $M$  
with a CW complex structure a classifying map $u:M\to B\Gamma$ can be chosen such that 
$$u(M,M^{(n-1)})\subset (B\Gamma^{(n-1)},B\Gamma^{(n-2)}).$$
\end{prop}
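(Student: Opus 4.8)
The plan is to reduce the claim to a single compression obstruction on the $(n-1)$-skeleton and then annihilate that obstruction using inessentiality.

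First I would invoke Theorem~\ref{ref}: since $M$ is inessential, $(u_M)_*[M]=0$ and the classifying map deforms into $B\Gamma^{(n-1)}$, so fix a cellular representative $u\colon M\to B\Gamma^{(n-1)}$. Cellularity already gives $u(M^{(n-2)})\subseteq(B\Gamma^{(n-1)})^{(n-2)}=B\Gamma^{(n-2)}$, so all that remains is to homotope $u$ inside $B\Gamma^{(n-1)}$ until the $(n-1)$-cells of $M$ are carried into $B\Gamma^{(n-2)}$ as well. I would note that this reduces to compressing $u|_{M^{(n-1)}}\colon M^{(n-1)}\to B\Gamma^{(n-1)}$ into $B\Gamma^{(n-2)}$, rel $M^{(n-2)}$, by a homotopy $H$ inside $B\Gamma^{(n-1)}$: given such $H$ with $H_0=u|_{M^{(n-1)}}$ and $H_1(M^{(n-1)})\subseteq B\Gamma^{(n-2)}$, then for each $n$-cell $\sigma$ of $M$ with characteristic map $\Phi_\sigma$ and attaching map $\beta_\sigma=\Phi_\sigma|_{S^{n-1}}$, the path $t\mapsto H_t\circ\beta_\sigma$ shows $H_1\circ\beta_\sigma$ is homotopic in $B\Gamma^{(n-1)}$ to $u\circ\beta_\sigma$, which bounds the disk $u\circ\Phi_\sigma$ in $B\Gamma^{(n-1)}$; hence $H_1\circ\beta_\sigma$ extends over $D^n$ inside $B\Gamma^{(n-1)}$, and these extensions together with $H_1$ glue to a map $u'\colon M\to B\Gamma^{(n-1)}$ with $u'(M^{(n-1)})\subseteq B\Gamma^{(n-2)}$. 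Since $u'$ and $u$ agree up to homotopy on $M^{(1)}$, $u'$ is again a classifying map, and it is the one sought.

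Next I would run obstruction theory for this compression. As $(B\Gamma^{(n-1)},B\Gamma^{(n-2)})$ is $(n-2)$-connected and $M^{(n-1)}$ has dimension $n-1$, there is exactly one obstruction, the top one,
\[
\mathfrak{o}(u)\in H^{n-1}\bigl(M^{(n-1)};\,\pi_{n-1}(B\Gamma^{(n-1)},B\Gamma^{(n-2)})\bigr),
\]
with local coefficients via the $\Gamma$-action. Passing to universal covers one has $\pi_{n-1}(B\Gamma^{(n-1)},B\Gamma^{(n-2)})\cong\pi_{n-1}(E\Gamma^{(n-1)},E\Gamma^{(n-2)})$, and since $E\Gamma^{(n-2)}$ is simply connected, the relative Hurewicz theorem (equivalently, homotopy excision, Theorem~\ref{h-excision}) identifies this $\Z[\Gamma]$-module with the equivariant cellular chain group $C_{n-1}(E\Gamma)$; under this identification $\mathfrak{o}(u)$ is represented by the degree-$(n-1)$ component of the equivariant cellular chain map $\widetilde u_{\#}\colon C_{*}(\widetilde M)\to C_{*}(E\Gamma)$ covering $u$.

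The final, and I expect hardest, step is to show $\mathfrak{o}(u)=0$ (if necessary after replacing $u$ within its homotopy class), and this is where inessentiality is used again. On one hand, contractibility of $E\Gamma$ forces $\widetilde u_{\#}$ to send cycles of $\widetilde M$ into boundaries of $E\Gamma$; on the other hand, since $M$ is a closed oriented $n$-manifold, Poincaré duality with local coefficients — conveniently applied after replacing $M^{(n-1)}$ by the homotopy-equivalent compact manifold with boundary obtained by deleting an open $n$-ball from each $n$-cell of $M$ — converts the top-degree class $\mathfrak{o}(u)$ into one-dimensional data controlled by the image of the fundamental class. Since $M$ is inessential, $u_*[M]=0$ in $H_n(B\Gamma)$ by Theorem~\ref{ref}, so $\mathfrak{o}(u)=0$, the homotopy $H$ exists, and the proof is complete. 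The delicate points all lie in this last step: pinning down the identification of the compression obstruction with the cellular chain map, the local-coefficient and representative-dependence bookkeeping, and the Poincaré-duality computation that extracts $\mathfrak{o}(u)=0$ from $u_*[M]=0$.
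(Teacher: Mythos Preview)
The paper does not give its own proof here; it simply quotes the result from \cite{BD1}, Lemma~3.5. Your framework is the right one: compress $u$ into $B\Gamma^{(n-1)}$ via Theorem~\ref{ref}, identify the single compression obstruction $\mathfrak{o}(u)\in H^{n-1}\bigl(M^{(n-1)};C_{n-1}(E\Gamma)\bigr)$ with the degree-$(n-1)$ component of the equivariant chain map $\tilde u_\#$, and afterwards re-extend over the $n$-cells. The gap is your last paragraph.

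You claim that $\mathfrak{o}(u)$ dies by a second appeal to $u_*[M]=0$. But your Poincar\'e--Lefschetz move on $N=M\setminus(\text{open balls})$ dualises $\mathfrak{o}(u)$ into $H_1(N,\partial N;C_{n-1}(E\Gamma))\cong\bigoplus H_1(\Wi N,\partial\Wi N)$, which is typically nonzero (it is the kernel of $H_0(\partial\Wi N)\to H_0(\Wi N)$, free of rank one less than the number of removed balls), and nothing in that group is visibly ``controlled by $u_*[M]$''. The argument that actually closes is different. Since $u(M)\subset B\Gamma^{(n-1)}$ one has $\tilde u_\#|_{C_n}=0$, so the obstruction cocycle on $M^{(n-1)}$ is already a cocycle on all of $M$; equivalently $\mathfrak{o}(u)$ lies in the image of the injective restriction $j^*\colon H^{n-1}(M;C_{n-1}(E\Gamma))\to H^{n-1}(M^{(n-1)};C_{n-1}(E\Gamma))$. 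Now apply Poincar\'e duality on the \emph{closed} manifold $M$ itself: $H^{n-1}(M;C_{n-1}(E\Gamma))\cong H_1(M;C_{n-1}(E\Gamma))$. Because $C_{n-1}(E\Gamma)$ is a free $\Z\Gamma$-module, this is a direct sum of copies of $H_1(M;\Z\Gamma)\cong H_1(\Wi M;\Z)=0$. Hence the source of $j^*$ vanishes and $\mathfrak{o}(u)=0$. The mechanism is freeness of the coefficient module together with simple connectivity of $\Wi M$, not a repeat use of $u_*[M]=0$.
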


\section{ The second obstruction}

Suppose that $\Gamma$ acts on an abelian group $\pi$. By $\pi_\Gamma=\pi\otimes_{\Z\Gamma}\Z$ we denote the group of coinvariants.
Since every $ko$-inessential manifold is inessential ( Proposition~\ref{ko-in}) and for every inessential $n$-manifold there is a classifying map $u:M\to B\Gamma$ with
 $u(M,M^{(n-1)})\subset (B\Gamma^{(n-1)},B\Gamma^{(n-2)})$ ( Proposition~\ref{ref2}), for $ko$-inessential manifolds we always will consider such 
classifying maps $u$.

The following lemma shows that the second step in the Strong Gromov's conjecture can be also reduced to a question about $ko$-inessentiality.

\begin{lemma}\label{2-obstruction}
Suppose that a closed spin $n$-manifold $M$, $n>4$, with $\Gamma=\pi_1(M)$  is $ko$-inessential in a way that $u_*([M]_{ko})=0$ in $ko_n(B\Gamma^{(n)})$ for a cellular classifying map
$u: M\to B\Gamma$. Then $M$ is strongly inessential.

Moreover, $M$ is strongly inessential if $j_*u_*([M]_{ko})=0$ where $j:B\Gamma^{(n)}\to B\Gamma^{(n)}/B\Gamma^{(n-2)}$ is the quotient map.
\end{lemma}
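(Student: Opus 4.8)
The plan is to express strong inessentiality as the possibility of killing a single secondary obstruction class by the modifications of $u$ one is allowed to make, to identify the coefficient group of that obstruction with a $ko$-homology group via h-excision (Theorem~\ref{h-excision}) and Proposition~\ref{pi-ko-iso}, and then to recognize the obstruction as the image of $u_*([M]_{ko})$. First I would fix a CW structure on $M$ with a single top cell, $M=M^{(n-1)}\cup_\phi e^n$ (possible since a closed $n$-manifold with an open ball removed collapses onto an $(n-1)$-complex), so that $M/M^{(n-1)}=S^n$ and the collapse $c\colon M\to S^n$ has degree one. By Proposition~\ref{ko-in}, $M$ is inessential, so by Proposition~\ref{ref2} we may take $u$ cellular with $u(M^{(n-1)})\subseteq B\Gamma^{(n-2)}$ and $u(M)\subseteq B\Gamma^{(n-1)}$; then $q\circ u$, for $q\colon B\Gamma^{(n-1)}\to B\Gamma^{(n-1)}/B\Gamma^{(n-2)}$, factors through $c$ as $\bar u\circ c$. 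Since $(B\Gamma^{(n-1)},B\Gamma^{(n-2)})$ is $(n-2)$-connected and $u(M^{(n-1)})$ lies in $B\Gamma^{(n-2)}$, the only obstruction to deforming $u$ into $B\Gamma^{(n-2)}$ inside $B\Gamma^{(n-1)}$ is the class $[\theta]\in H^n(M;\pi_n(B\Gamma^{(n-1)},B\Gamma^{(n-2)}))$ of the obstruction cocycle $\theta(e^n)=[u|_{e^n}]$: if $[\theta]=0$ then such a deformation exists, so $M$ is strongly inessential. One is moreover free to replace $u$ by any cellular classifying map of the same form homotopic to it in $B\Gamma$, and in particular, since $\pi_n(B\Gamma)=0$, to modify $u$ on the top cell by an arbitrary $\zeta\in\pi_n(B\Gamma^{(n-1)})$, which changes $[\theta]$ by the image of $\zeta$; so $M$ is strongly inessential as soon as $[\theta]$ lies in the subgroup generated by such changes.

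To identify the coefficient group, I would pass to universal covers: $\pi_n(B\Gamma^{(n-1)},B\Gamma^{(n-2)})\cong\pi_n(\widetilde{B\Gamma^{(n-1)}},\widetilde{B\Gamma^{(n-2)}})$, and since $n>4$, Theorem~\ref{h-excision} with $m=n-2$ (here $\widetilde{B\Gamma^{(n-1)}}$ is $(n-2)$-connected, $\widetilde{B\Gamma^{(n-2)}}$ is $(n-3)$-connected, and $n\le 2m-1$) identifies this with $\pi_n(\widetilde{B\Gamma^{(n-1)}}/\widetilde{B\Gamma^{(n-2)}})$, the $n$-th homotopy group of a wedge of $(n-1)$-spheres. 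As $n\ge5$ this is in the stable range, hence equals $\pi^s_n$ of that wedge, which by Proposition~\ref{pi-ko-iso} (with $K=\widetilde{B\Gamma^{(n-1)}}$) equals $ko_n$ of that wedge; as a $\Z\Gamma$-module it is the free $\Z/2$-module $\mathcal L$ on the $(n-1)$-cells of $B\Gamma$. Since $M$ is spin and so oriented, Poincaré duality gives $H^n(M;\mathcal L)\cong H_0(M;\mathcal L)=\mathcal L_\Gamma=ko_n(B\Gamma^{(n-1)}/B\Gamma^{(n-2)})$, under which $[\theta]$ goes to the image of $\theta(e^n)=[u|_{e^n}]$ in $\mathcal L_\Gamma$, namely $[\bar u]\in\pi_n(B\Gamma^{(n-1)}/B\Gamma^{(n-2)})\cong ko_n(B\Gamma^{(n-1)}/B\Gamma^{(n-2)})$; since $c$ has degree one, $[\bar u]=\bar u_*([S^n]_{ko})$ is exactly the image of $u_*([M]_{ko})$ under $ko_n(B\Gamma^{(n-1)})\to ko_n(B\Gamma^{(n-1)}/B\Gamma^{(n-2)})$.

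Finally I would pin down the allowed changes of $[\theta]$. Taking $\zeta=[\psi_\beta]\circ\eta$, with $\psi_\beta$ the attaching map of an $n$-cell $e^n_\beta$ of $B\Gamma$ and $\eta$ the Hopf class, one computes that the resulting change of $[\theta]$ in $ko_n(B\Gamma^{(n-1)}/B\Gamma^{(n-2)})$ is the mod-$2$ reduction $\bar\partial(e^n_\beta)$ of the cellular boundary, so $[\theta]$ can be changed by any element of the image of $\bar\partial\colon C_n(B\Gamma;\Z/2)\to C_{n-1}(B\Gamma;\Z/2)$; by the $ko$-homology long exact sequence of the cofiber sequence $B\Gamma^{(n-1)}/B\Gamma^{(n-2)}\to B\Gamma^{(n)}/B\Gamma^{(n-2)}\to B\Gamma^{(n)}/B\Gamma^{(n-1)}$ this image equals $\ker\big(\iota_*\colon ko_n(B\Gamma^{(n-1)}/B\Gamma^{(n-2)})\to ko_n(B\Gamma^{(n)}/B\Gamma^{(n-2)})\big)$. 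Since the inclusion $B\Gamma^{(n-1)}\hookrightarrow B\Gamma^{(n)}$ followed by $j$ equals $q$ followed by $\iota$, we have $j_*u_*([M]_{ko})=\iota_*[\theta]$; hence if $j_*u_*([M]_{ko})=0$ then $[\theta]\in\ker\iota_*$ can be killed by modifying $u$ on the top cell, and $M$ is strongly inessential. If instead $u_*([M]_{ko})=0$ in $ko_n(B\Gamma^{(n)})$, then a fortiori $j_*u_*([M]_{ko})=0$, which gives the first assertion.

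The delicate step is the computation just invoked — that modifying $u$ on the top cell by $[\psi_\beta]\circ\eta$ shifts $[\theta]$ by the mod-$2$ cellular boundary of $e^n_\beta$, matched against the connecting homomorphism of the $ko$-homology sequence. This rests on careful tracking of the identifications of the second paragraph (h-excision, the stable-range comparison together with Proposition~\ref{pi-ko-iso}, and Poincaré duality with local coefficients), on the naturality of $\pi^s_*\to ko_*$, and on the behaviour of obstruction cochains under a modification of the map on the top cell; the connectivity estimates enabling Theorem~\ref{h-excision} for $n>4$, the reduction of the deformation-into-a-subcomplex problem to a single top obstruction, and Poincaré duality for the oriented manifold $M$ are routine.
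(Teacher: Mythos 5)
Your proposal is correct, but it takes a genuinely different route from the paper. The paper treats the deformation of $u$ into $B\Gamma^{(n-2)}$ as a lifting problem through the inclusion $B\Gamma^{(n-2)}\hookrightarrow B\Gamma^{(n)}$, so its single obstruction lives directly in $H^n(M;\pi_n(B\Gamma^{(n)},B\Gamma^{(n-2)}))$; it then identifies this group (after passing to coinvariants and a Five-Lemma diagram chase built from the homotopy exact sequence of the triple $(E\Gamma^{(n)},E\Gamma^{(n-1)},E\Gamma^{(n-2)})$) with the image $i_*\bar u_*(1)$ inside $\pi_n(B\Gamma^{(n)}/B\Gamma^{(n-2)})$, and finishes with Freudenthal, Proposition~\ref{pi-ko-iso}, and naturality. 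You instead keep the smaller target $B\Gamma^{(n-1)}$, so your primary obstruction $[\theta]$ lives in $H^n(M;\pi_n(B\Gamma^{(n-1)},B\Gamma^{(n-2)}))$, and you supplement this by computing the indeterminacy coming from varying $u$ on the top cell (using $\pi_n(B\Gamma)=0$), showing that the modifications $\psi_\beta\circ\eta$ change $[\theta]$ by $\bar\partial(e^n_\beta)$, whose span is $\ker\iota_*$ by the $ko$-homology exact sequence of the skeleton cofibration. Both routes end at the same algebraic statement, namely $\iota_*[\theta]=j_*u_*([M]_{ko})$; the paper's version absorbs your indeterminacy analysis into the choice of the larger pair and replaces it by the coinvariants/Five-Lemma argument, while yours is more explicit about the $\Z/2$-chain level and makes the role of the mod-$2$ cellular boundary visible. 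The one place in your write-up that should be expanded before this counts as a full proof is exactly the step you flag: the verification that modifying $u$ on the top cell by $\psi_\beta\circ\eta$ shifts $[\theta]$ by $\bar\partial(e^n_\beta)$, and that this matches the connecting homomorphism of the cofibration under the chain of identifications (h-excision, Freudenthal, Proposition~\ref{pi-ko-iso}, and Poincar\'e duality with $\Z\Gamma$-coefficients); that computation is straightforward via Hilton--Milnor in the stable range but it is where all the bookkeeping sits. Everything else (the setup via Propositions~\ref{ko-in} and~\ref{ref2}, the connectivity estimates for Theorem~\ref{h-excision} with $m=n-2$, and the reduction of the first assertion to the second) is sound.
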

\begin{proof}
We may assume that $M$ has a CW complex structure with one $n$-dimensional cell. Let $\psi:D^n\to M$ be its characteristic map.

We show that the lifting problem

\[
  \xymatrix{
  M^{(n-1)} \ar[r] \ar[d]^\subset & B\Gamma^{(n-2)} \ar[d]^\subset \\
   M \ar[r]^{iu}\ar[ur] &  B\Gamma^{(n)}.  }
\]
has a solution. Here $i:B\Gamma^{(n-1)}\to B\Gamma^{(n)}$ is the inclusion.  It would  mean that there is a homotopy lift $\hat u: M\to B\Gamma^{(n-2)}$ of $i\circ u$ which agrees with $u$ on $M^{(n-2)}$.
Since $n\ge 4$, the map $\hat u$ induces an isomorphism of the fundamental groups and, hence, is  classifying map. 

Note that  the  $(n-1)$-homotopy groups of the homotopy  fiber $F$ of the inclusion $B\Gamma^{(n-2)}\to B\Gamma^{(n)}$ equals the relative $n$-homotopy 
group, $\pi_{n-1}(F)=\pi_n(B\Gamma^{(n)},B\Gamma^{(n-2)})$.
Then the first and the only obstruction to deform $u$ to $B\Gamma^{(n-2)}$ is defined by the cocycle
$c_u:C_n(M)\to\pi_{n-1}(F)$ represented by the composition 
$$\pi_n(D^n,\partial D^n)\stackrel{\psi_*}\to\pi_n(M,M^{(n-1)})\stackrel{u_*}\to\pi_n(B\Gamma^{(n-1)},B\Gamma^{(n-2)})\stackrel{i_*}\to\pi_n(B\Gamma^{(n)},B\Gamma^{(n-2)})$$ 
with the cohomology class
$o_u=[c_u]\in  H^n(M;\pi_n(B\Gamma^{(n)},B\Gamma^{(n-2)}))$.  By the Poincare Duality with local coefficients, the cohomology class $o_u$ is dual
to the homology class $$PD(o_u)\in H_0(M;\pi_n(B\Gamma^{(n)},B\Gamma^{(n-2)}))=\pi_n(B\Gamma^{(n)},B\Gamma^{(n-2)})_{\Gamma}$$
represented by $q_*i_*u_*\psi_*(1)$ where $$q_*:\pi_n(B\Gamma^{(n)},B\Gamma^{(n-2)})\to\pi_n(B\Gamma^{(n)},B\Gamma^{(n-2)})_{\Gamma}$$ 
is the projection onto the group of coinvariants.  

Note that $\pi_n(B\Gamma^{(n-1)},B\Gamma^{(n-2)})=\pi_n(E\Gamma^{(n-1)},E\Gamma^{(n-2)})$.
Below we will identify these groups. Denote by$$\tilde i:(E\Gamma^{(n-1)},E\Gamma^{(n-2)})  \to (E\Gamma^{(n)},E\Gamma^{(n-2)})$$ the inclusion induced by $i$.

Since $n\le 2(n-2)-1$, by 
Theorem~\ref{h-excision}, $$\pi_n(E\Gamma^{(n-1)},E\Gamma^{(n-2)})=\pi_n(E\Gamma^{(n-1)}/E\Gamma^{(n-2)}).$$
It is easy to see that $$\pi_n(E\Gamma^{(n-1)}/E\Gamma^{(n-2)})_\Gamma=\pi_n(B\Gamma^{(n-1)}/B\Gamma^{(n-2)}).$$
Similarly, $$\pi_n(E\Gamma^{(n)}/E\Gamma^{(n-1)})_\Gamma=\pi_n(B\Gamma^{(n)}/B\Gamma^{(n-1)}).$$
The homotopy exact sequence of the triple $(E\Gamma^{(n)},E\Gamma^{(n-1)},E\Gamma^{(n-2)})$ produces the following commutative diagram
$$
\begin{CD}
\pi_{n+1}(E\Gamma^{(n)},E\Gamma^{(n-1)})@>>>\pi_{n}(E\Gamma^{(n-1)},E\Gamma^{(n-2)}) @>\tilde i_*>> im(\tilde i_*) @>>> 0\\
@VVV @VVV @Vq_*VV @.\\
\pi_{n+1}(E\Gamma^{(n)},E\Gamma^{(n-1)})_\Gamma@>>>\pi_{n}(E\Gamma^{(n-1)},E\Gamma^{(n-2)})_\Gamma @>\tilde i_*\otimes_\Gamma 1_\Z>> im(\tilde i_*)_\Gamma @>>> 0\\
@V\cong VV @V\cong VV @V\xi VV @.\\
\pi_{n+1}(B\Gamma^{(n)}/B\Gamma^{(n-1)})@>>>\pi_{n}(B\Gamma^{(n-1)}/B\Gamma^{(n-2)}) @> i_*>> im( i_*) @>>> 0\\
\end{CD}
$$
where the middle row  is exact being obtained by tensor product by $\Z$ over $\Z\Gamma$ of an exact row.
By the Five Lemma the homomorphism $\xi$ is an isomorphism.

 Denote by $\bar u:M/M^{(n-1)}=S^n\to B\Gamma^{(n-1)}/B\Gamma^{(n-2)}$ the induced map. The following commutative diagram
$$
\begin{CD}
\pi_n(M,M^{(n-1)}) @>\tilde i_*u_*>>im(\tilde i_*) @>q_*>> im(\tilde i_*)_\Gamma @.\\
@A\psi_*AA @. @VV\xi V @.\\
\pi_n(D^n,\partial D^n) @>=>>\pi_n(M/M^{(n-1)}) @>i_*\bar u_*>> im(i_*)@>\subset>> \pi_n(B\Gamma^{(n)}/B\Gamma^{(n-2)})\\
\end{CD}
$$
implies that
$i_*\bar u_*(1)=\xi q_*\tilde i_*u_*\psi_*(1).$  Thus, $i_*\bar u_*(1)=0$ if and only if the obstruction $o_u$ vanishes. 

We show that $i_*\bar u_*(1)=0$.
The restriction $n>4$ imply that $\bar
u_*(1)$ survives to the $ko$-homology group:
$$
\begin{CD}
\pi_n(B\Gamma^{(n)}/B\Gamma^{(n-2)}) @>\cong >> \pi_n^s(B\Gamma^{(n)}/B\Gamma^{(n-2)}) @>\cong >>ko_n(B\Gamma^{(n)}/B\Gamma^{(n-2)}).\\
\end{CD}
$$

The first isomorphism is obtained by taking $n$-times suspension and using Freudenthal Theorem, and the second isomorphism by Proposition~\ref{pi-ko-iso}.
Then the  commutative diagram 
$$
\begin{CD}
\pi_n(S^n)  @>\cong>> ko_n(S^n)\\
@Vi_*\bar u_*VV @Vi_*\bar u_*VV \\
\pi_n(B\Gamma^{(n)}/B\Gamma^{(n-2)}) @>\cong>> ko_n(B\Gamma^{(n)}/B\Gamma^{(n-2)})
\end{CD}
$$
implies that $i_*\bar u_*(1)= 0$ for $ko_n$ if and only if $i_*\bar u_*(1)=0$ for $\pi_n$.

From the assumption and the diagram defined by the quotient maps $j':M\to M/M^{(n-1)}=S^n$ and $j:B\Gamma^{(n)}\to B\Gamma^{(n)}/B\Gamma^{(n-2)}$
\[
 \begin{CD}
ko_n(M)  @>u_*>>ko_n(B\Gamma^{(n)})\\
@Vj_*'VV @Vj_*VV\\
 ko_n(S^n)   @>i_*\bar u_*>> ko_n(B\Gamma^{(n)}/B\Gamma^{(n-2)}) \\
\end{CD}
\]
it follows that $i_*\bar u_*(1)=i_*\bar u_*j'_*([M]_{ko})=j_*u_*([M]_{ko})=0$.
\end{proof}

\begin{remark}
A slightly weaker lemma was proven  in~\cite{Dr1}. Namely,  a strong inessentiality of $M$ was proven under assumption of $j_*u_*([M]_{ko})=0$ in $ko_n(B\Gamma^{(n-1)}/B\Gamma^{(n-2)})$ instead of $ko_n(B\Gamma^{(n)}/B\Gamma^{(n-2)})$. Since $ko_*$ is a connective homology theory, for any $ko$-inessential $n$-manifold $M$ with the fundamental group $\Gamma$ we have $u_*([M]_{ko})=0$ in $ko_n(B\Gamma^{(n+1)}/B\Gamma^{(n-2)})$. Thus, in the current version of lemma on the second obstruction 
we are one step closer to an affirmative answer to the following question.
\end{remark}
\begin{question}\label{Q1}
Is every  $ko$-inessential manifold  strongly  inessential ?
\end{question}
We note that Bolotov's example~\cite{Bo2} is $ko$-essential.

\section{K-theory Stabilization Condition}

The following condition on K-theory of a group $\Gamma$ was introduced in~\cite{Dr1} In the proof of  the Strong Gromov Conjecture for abelian fundamental groups in~\cite{Dr1}
(see also~\cite{Bo1})
we considered the following property of groups $\Gamma$ denoted by (*) :
The inclusion homomorphism
$
  KO_*(B\Gamma^{(m)})\to KO_*(B\Gamma)
$
is injective for all $m$.

This property depends on choice of $B\Gamma$.
Here we introduce a weaker condition which is independent of the choice of $B\Gamma$ and sufficient for our purposes. 

First, we define this condition for spaces.
We state it for arbitrary generalized homology theory $h_*$.

{\bf 1-Step Stabilization property} for CW complex $X$: {\em For every $m$ the image of the inclusion homomorphism $h_*(X^{(m)})\to h_*(X)$  stabilizes at $X^{(m+1)}$.}

This property can be written as the formula $$\Ker(\phi_{m,m+1})_*=\Ker(\phi_{m,\infty})_*$$
where $\phi_{m,m+\ell}:X^{(m)}\to X^{(m+\ell)}$, $\ell=1,\dots,\infty$, denotes the inclusions. 

We note that every CW complex has the  1-Step Stabilization property for the ordinary homology $H_*$.

By default the 1-Step Stabilization property is assumed for $h_*$ in all dimensions, but
often we use this property for fixed $k$ or $k$ depending on $m$. We denote the set of such $k$ by $K(m)\subset\N$.
In such situation we say that $X$ has the 1-Step Stabilization property for $h_*$  restricted to $K(m)$.

For example, for the stable homotopy groups $\pi^s_*$ and, for that matter for any connective homology theory, the 1-Step Stabilization property
restricted to $K(m)=\{k\in\N\mid k\le m\}$ holds for
any CW complex.  In this paper we use the 1-Step Stabilization property for the real $K$-theory, $KO_*$, with $k=m,m+1$. Since the complex $K$-theory, $KU_*$ is 2-periodic, it is not a restriction on $k$, but in the case of $KO_*$ it is a potential restriction.

\begin{prop}\label{idependent}
The (restricted) 1-Step Stabilization property is a homotopy invariant. 
\end{prop}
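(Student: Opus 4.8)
The plan is to show that if $X$ and $Y$ are homotopy equivalent CW complexes, then $X$ has the (restricted) 1-Step Stabilization property for $h_*$ if and only if $Y$ does. The obvious difficulty is that a homotopy equivalence $f: X \to Y$ need not be cellular-skeleton-preserving in any strict sense, so one cannot directly compare the skeletal filtrations. The standard fix is to replace $f$ by a cellular homotopy equivalence (by the cellular approximation theorem) and then to observe that even a cellular homotopy equivalence only gives $f(X^{(m)}) \subset Y^{(m)}$, not a filtered homotopy equivalence. So the genuine tool here is that homotopy equivalent CW complexes have skeleta that are related up to a controlled shift: more precisely, one uses that for a cellular homotopy equivalence $f$ with cellular homotopy inverse $g$, the composites $gf \simeq \id_X$ and $fg \simeq \id_Y$ can themselves be taken cellular, and the homotopies can be pushed to be cellular as maps $X \times I \to X$ (resp. $Y \times I \to Y$), which moves $X^{(m)}$ into $X^{(m+1)}$ under the homotopy.

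Concretely, I would argue as follows. Choose cellular maps $f: X \to Y$, $g: Y \to X$ that are mutually homotopy-inverse, and choose cellular homotopies $H: X \times I \to X$ from $\id_X$ to $gf$ and $H': Y \times I \to Y$ from $\id_Y$ to $fg$. Since $H$ is cellular and $X^{(m)} \times I \subset (X \times I)^{(m+1)}$, we get $H(X^{(m)} \times I) \subset X^{(m+1)}$; in particular $gf(X^{(m)}) \subset X^{(m+1)}$ and, as maps into $X^{(m+1)}$, the inclusion $X^{(m)} \hookrightarrow X^{(m+1)}$ is homotopic (inside $X^{(m+1)}$) to $gf|_{X^{(m)}}$. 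The same holds for $Y$. This produces, on $h_*$, factorizations of the skeletal inclusion maps through the skeleta of the other complex shifted by one. One then chases kernels: suppose $X$ has the property, take $y \in h_*(Y^{(m)})$ with $(\phi^Y_{m,\infty})_*(y) = 0$; push $y$ into $h_*(X^{(m+1)})$ via $f$, use that $f$ is a homotopy equivalence together with the $Y$-homotopy to identify its image in $h_*(Y)$ with (up to the isomorphism $f_*$) the image of $y$, deduce the pushed class dies in $h_*(X)$, apply the 1-Step property for $X$ at level $m+1$ to conclude it already dies in $h_*(X^{(m+2)})$, and finally push back by $g$ and use the homotopy on $Y$ to land in $h_*(Y^{(m+1}))$ — wait, the bookkeeping of shifts needs care, and this is where I expect the only real friction.

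The honest way to handle the shift bookkeeping is to make the homotopies themselves cellular of the correct form: one can arrange (by a second application of cellular approximation to the homotopies relative to the endpoints, noting the endpoints $\id_X$ and $gf$ are already cellular) that $H$ restricts to a homotopy $X^{(m)} \times I \to X^{(m)}$ for each $m$ — i.e., one takes $f$ and $g$ to be skeleton-preserving and the homotopies to be skeleton-preserving as well. This is possible precisely because, for CW pairs, a cellular map and cellular homotopy can be chosen so that $k$-cells map into $k$-skeleta and the homotopy of $k$-cells stays in the $k$-skeleton after one more cellular approximation step applied to each skeleton inductively. With skeleton-preserving $f$, $g$, $H$, $H'$ in hand, each $f^{(m)}: X^{(m)} \to Y^{(m)}$ is a homotopy equivalence, so the whole towers $\{X^{(m)}\}$ and $\{Y^{(m)}\}$ are levelwise homotopy equivalent compatibly with inclusions, and hence $(\phi^X_{m,m+1})_* $ and $(\phi^Y_{m,m+1})_*$ have "the same" kernels under the $f_*$ isomorphisms, and likewise for $(\phi_{m,\infty})_*$; the equality $\Ker(\phi_{m,m+1})_* = \Ker(\phi_{m,\infty})_*$ transports directly.

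The main obstacle, then, is purely the first reduction: verifying that one may choose a homotopy equivalence between CW complexes together with its inverse and the connecting homotopies all in skeleton-preserving form. I would cite or prove this as a lemma, the proof being an inductive cellular approximation over skeleta — approximate $f$ on $X^{(0)}$, extend, approximate the homotopy relative to already-fixed pieces, and so on — which is standard but slightly tedious; once it is granted, the restricted case (only $k \in K(m)$) follows verbatim, since the homotopy equivalences $f^{(m)}$ induce isomorphisms on $h_k$ for every $k$ in particular for $k \in K(m)$, and the restricted 1-Step condition is just the same kernel equality for those $k$. I would also remark that the argument shows slightly more: the filtered homotopy type of $X$, and hence every ``$\ell$-Step Stabilization'' variant, is a homotopy invariant.
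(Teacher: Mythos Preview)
Your third paragraph, where you commit to the actual argument, rests on a false claim. You assert that the homotopy $H: X \times I \to X$ from $\id_X$ to $gf$ can be arranged so that $H(X^{(m)} \times I) \subset X^{(m)}$ for every $m$, making each restriction $f^{(m)}: X^{(m)} \to Y^{(m)}$ a homotopy equivalence. This is impossible in general: take $X = D^2$ with its standard CW structure and $Y$ a point; then $X^{(1)} = S^1$ while $Y^{(1)}$ is a point, and these are not homotopy equivalent, so no skeleton-preserving homotopy equivalence exists between $X$ and $Y$. Cellular approximation applied to $H$ gives only $H(X^{(m)} \times I) \subset X^{(m+1)}$, since $X^{(m)} \times I \subset (X \times I)^{(m+1)}$; the shift by one is intrinsic, and the ``lemma'' you propose to cite or prove inductively is simply false.

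The irony is that your second paragraph already contained the correct mechanism, which is exactly what the paper does, and the ``bookkeeping of shifts'' you worried about is not a genuine obstacle. From the cellular homotopy $fg \simeq \id_Y$ one gets that the inclusion $\phi^Y_{m,m+1}: Y^{(m)} \to Y^{(m+1)}$ is homotopic, \emph{as a map into $Y^{(m+1)}$}, to $(f|_{X^{(m+1)}}) \circ \phi^X_{m,m+1} \circ (g|_{Y^{(m)}})$. Now take $a \in h_k(Y^{(m)})$ with $(\phi^Y_{m,m+1})_*(a) \neq 0$; the factorization forces $(\phi^X_{m,m+1})_*(g|)_*(a) \neq 0$, and the $1$-Step property for $X$ at level $m$ (not $m+1$) yields $(\phi^X_{m,\infty})_*(g|)_*(a) \neq 0$ in $h_k(X)$. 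For the passage back to $Y$ no further shift is needed: the relation $\phi^X_{m+1,\infty} \simeq g \circ \phi^Y_{m+1,\infty} \circ (f|_{X^{(m+1)}})$ holds as maps into the full space $X$, and combining it with the same left-square factorization gives $(\phi^X_{m,\infty})_*(g|)_*(a) = g_*(\phi^Y_{m,\infty})_*(a)$, whence $(\phi^Y_{m,\infty})_*(a) \neq 0$. Only one shift is ever introduced, it is absorbed by invoking the $1$-Step hypothesis on $X$ at the same level $m$, and the restricted version (fixed $k \in K(m)$) goes through verbatim. Your instinct to abandon this line was mistaken; straighten out the directions of $f$ and $g$ in your sketch and the argument is complete.
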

\begin{proof}
Suppose that a CW complex $X_1$ has the 1-Step Stabilization property for $h_*$.
Let $f:X_1\to X_2$ be a homotopy equivalence with a homotopy inverse map $g$. We may assume that both maps $f$ and $g$ are cellular. 
Then  in view of the cellular approximation theorem the composition 
$$(f|_{X_1^{(m+1)}})\circ \phi_{m,m+1}\circ g|_{X_2^{(m)}}:X_2^{(m)}\to X_2^{(m+1)}$$ is homotopic to the inclusion $\phi'_{m,m+1}:X_2^{(m)}\to X_2^{(m+1)}$. 
Thus, we have  the following commutative diagram
$$
\begin{CD}
h_k(X_1^{(m)}) @>(\phi_{m,m+1})_*>> h_k(X_1^{(m+1)}) @>(\phi_{m+1,\infty})_*>> h_k(X_1)\\
@Ag|_*AA @Vf|_*VV @Ag_*A\cong A\\
h_k(X_2^{(m)}) @>(\phi'_{m,m+1})_*>> h_k(X_2^{(m+1)}) @>(\phi'_{m+1,\infty})_*>> h_k(X_2).\\
\end{CD}
$$
Assume that $(\phi'_{m,m+1})_*(a)\ne 0$. Then $(f|)_*(\phi_{m,m+1})_* (g|)_*(a)\ne 0$ and, hence, $(\phi_{m,m+1})_*(g|)_*(a)\ne 0$.
By the 1-step Stabilization property we obtain
$$
0\ne (\phi_{m+1,\infty})_*(\phi_{m,m+1})_* g|_*(a)=g_*(\phi'_{m+1,\infty})_* (f|)_*(\phi_{m,m+1})_*( g|)_*(a)$$
$$=g_*(\phi'_{m+1,\infty})_* (\phi'_{m,m+1})_* (a).
$$
Therefore, $(\phi'_{m+1,\infty})_* (\phi'_{m,m+1})_* (a)\ne 0$.
\end{proof}

\begin{prop}\label{quotients}
 If $X$ has the (restricted) 1-Step Stabilization property for $h_*$, then so does the quotient complex $Y=X/X^{(n)}$.
\end{prop}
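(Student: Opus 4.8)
The plan is to relate the skeletal filtration of $Y=X/X^{(n)}$ to that of $X$ through the long exact sequences of the CW pairs $(X^{(j)},X^{(n)})$. First I would record the shape of $Y$: its induced CW decomposition has $Y^{(j)}=\ast$ for $j\le n$ and $Y^{(j)}=X^{(j)}/X^{(n)}$ for $j>n$; for a CW pair the collapsing map gives a natural isomorphism $\widetilde h_k(X^{(j)}/X^{(n)})\cong h_k(X^{(j)},X^{(n)})$; and since the basepoint is a retract of every $Y^{(j)}$, the kernels of the skeletal inclusions computed in $h_*$ coincide with those computed in reduced homology. So the problem becomes: for each $m$ and $k$, show that a class in $\widetilde h_k(Y^{(m)})$ that dies in $\widetilde h_k(Y)$ already dies in $\widetilde h_k(Y^{(m+1)})$ — the opposite inclusion of kernels being automatic, and the case $m\le n$ being vacuous because then $\widetilde h_k(Y^{(m)})=0$.

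For $m>n$ I would set up the ladder of long exact sequences of $(X^{(m)},X^{(n)})\hookrightarrow(X^{(m+1)},X^{(n)})\hookrightarrow(X,X^{(n)})$, in which every $h_*(X^{(n)})$-column is the identity, and then run a diagram chase through it, using the identifications $\widetilde h_k(Y^{(j)})=h_k(X^{(j)},X^{(n)})$. Take $z\in h_k(X^{(m)},X^{(n)})$ with trivial image in $h_k(X,X^{(n)})$. Naturality of the connecting map together with the identity $X^{(n)}$-columns forces $\partial z=0$, so $z=p_*(w)$ for some absolute class $w\in h_k(X^{(m)})$. The image $w_\infty$ of $w$ in $h_k(X)$ then satisfies $p_*(w_\infty)=0$, so by exactness $w_\infty$ is the image of some $a\in h_k(X^{(n)})$; writing $a_m\in h_k(X^{(m)})$ for the image of $a$, the difference $w-a_m$ lies in $\Ker\!\big(h_k(X^{(m)})\to h_k(X)\big)$, which by the 1-Step Stabilization property of $X$ equals $\Ker\!\big(h_k(X^{(m)})\to h_k(X^{(m+1)})\big)$. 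Hence $w$ and $a_m$ have the same image $w_{m+1}$ in $h_k(X^{(m+1)})$, and this image lies in the image of $h_k(X^{(n)})\to h_k(X^{(m+1)})$, on which $p_*$ vanishes (consecutive maps in the long exact sequence); therefore the image of $z$ in $h_k(X^{(m+1)},X^{(n)})$ is $p_*(w_{m+1})=0$, which is exactly what is needed.

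The hard part — indeed the only place the hypothesis is used — is the passage through $p_*$: a relative class in $h_k(X^{(j)},X^{(n)})$ need not come from an absolute class in $h_k(X^{(j)})$, so one cannot transport the stabilization statement for $X$ directly; the chase circumvents this by first using that $z$ dies in $h_k(X,X^{(n)})$ to deduce $\partial z=0$ (so $z$ does lift to $h_k(X^{(m)})$), and then absorbing the resulting indeterminacy — a class in the image of $h_k(X^{(n)})$ — using that the $X^{(n)}$-columns are constant along the filtration. Beyond organizing this ladder I expect no real obstacle. I would also note that the argument is dimension-local: it invokes the hypothesis only for the single $k$ at skeleton level $m$, and for $m>n$ the $m$-skeleton of $Y$ is built on the $m$-skeleton of $X$, so if $X$ has the 1-Step Stabilization property restricted to a set $K(m)$ then so does $Y$ (and trivially when $m\le n$) — which is precisely what the intended application to $KO_*$ with $k=m,m+1$ requires.
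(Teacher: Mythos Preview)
Your proof is correct and follows essentially the same diagram chase as the paper: lift the relative class to an absolute one via $\partial z=0$, correct by a class coming from $h_k(X^{(n)})$, apply the 1-Step Stabilization property of $X$, and observe that the correction dies under $p_*$. The only cosmetic difference is that the paper works directly in $h_k(Y^{(j)})$ rather than passing through $h_k(X^{(j)},X^{(n)})$, and phrases the correction step as a two-case split instead of a single subtraction.
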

\begin{proof}
We have to check the 1-Step Stabilization property for $Y$ only for $m\ge n+1$.
Let $a\in \Ker\{h_k(Y^{(m)})\to h_k(Y)\}$ and let $\bar a\in h_k(Y^{(m+1)})$ be the image of $a$.
We show that $\bar a=0$.
We note that $Y^{(m)}=X^{(m)}/X^{(n)}$.
From the diagram generated by the exact sequence of pairs $(X^{(m)},X^{(n)})$ and $(X,X^{(n)})$ it follows that $a\in\Ker\{\partial: h_k(Y^{(m)})\to h_{k-1}(X^{(n)})\}$. Hence, by exactness $a$ is the image of some $b\in h_k(X^{(m)})$. If  the image of $b$ in $h_k(X)$ is trivial, then by the 1-Step Stabilization property its image in $h_k(X^{(m+1)})$ is zero and, therefore, as one can see from the diagram below, $\bar a=0$,
$$
\begin{CD}
h_k(X^{(n)}) @>>> h_k(X^{(n)}) @>>> h_k(X^{(n)}) \\
@Vi_*VV @Vi_*VV @Vi_*VV\\
h_k(X^{(m)}) @>>> h_k(X^{(m+1)}) @>>> h_k(X)\\
@Vj_*VV @Vj_*VV @Vj_*VV\\
h_k(Y^{(m)}) @>>> h_k(Y^{(m+1)}) @>>> h_k(Y).\\
\end{CD}
$$
Suppose that the image $b'\in h_k(X)$ of $b$ is not zero. By exactness, there is $c\in h_k(X^{(n)})$ that maps by $i_*$ onto $b'$. Then the image of $b-i_*(c)$ in $h_k(X)$ is zero. By the 1-Step Stabilization, $\bar b-i_*(c)=0$ where $\bar b\in h_k(X^{(m+1)})$ is the image of $b$. Note that
$\bar a=j_*(\bar b)=j_*(\bar b-i_*(c))=0$.
\end{proof}

Since all classifying CW complexes $B\Gamma$ for a group $\Gamma$ are homotopy equivalent, we can define 1-Step Stabilization property
for groups. Thus, a group $\Gamma$ has  the {\bf 1-step Stabilization property} for a generalized homology theory $h_*$ if a classifying CW complex $B\Gamma$ has it.

We prove the following two propositions

\begin{prop}\label{KO-iness}
Suppose that a group $\Gamma$ has the 1-Step Stabilization property for $KO_*$ for all $m$, with $k=m$. Then every
closed  $KO$-inessential manifold $M$ with the fundamental group $\pi_1(M)=\Gamma$ is inessential.
\end{prop}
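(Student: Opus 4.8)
The plan is to upgrade the $KO$-inessentiality of $M$ to $ko$-inessentiality and then invoke Proposition~\ref{ko-in}. Fix a cellular classifying map $u\colon M\to B\Gamma$; since $\dim M=n$, $u$ factors (up to homotopy) through the $n$-skeleton as $u=\iota\circ u'$ with $u'\colon M\to B\Gamma^{(n)}$ and $\iota\colon B\Gamma^{(n)}\hookrightarrow B\Gamma$. Set $\alpha=u'_*([M]_{ko})\in ko_n(B\Gamma^{(n)})$, so that $u_*([M]_{ko})=\iota_*(\alpha)$, and let $\beta\in ko_n(B\Gamma^{(n+1)})$ be the image of $\alpha$ under the next inclusion. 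Because $ko_*$ is connective, $B\Gamma$ has the $1$-Step Stabilization property for $ko_*$ at $k=m=n$, so $\iota_*(\alpha)=0$ in $ko_n(B\Gamma)$ if and only if $\beta=0$; it therefore suffices to prove $\beta=0$.

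The tool that forces this is the comparison fact: \emph{$per\colon ko_n(Z)\to KO_n(Z)$ is injective for every CW complex $Z$ with $\dim Z\le n+1$.} Indeed, the cofiber $C$ of $per\colon ko\to KO$ has $\pi_q(C)=0$ for $q\ge 0$ (immediate from the definition of a connective cover and the long exact sequence of homotopy groups), so in the Atiyah--Hirzebruch spectral sequence computing $C_{n+1}(Z)$ every potentially nonzero term sits in homological degree $\ge n+2$; hence $C_{n+1}(Z)=0$ when $\dim Z\le n+1$, and exactness of the cofiber sequence $C_{n+1}(Z)\to ko_n(Z)\to KO_n(Z)$ gives the injectivity. (Alternatively one can deduce this --- indeed that $per$ is an isomorphism here --- from Propositions~\ref{moor} and~\ref{moore}, by writing $Z/Z^{(n-1)}$ as a wedge of $S^n$'s, $S^{n+1}$'s and Moore spaces and running the Five Lemma over the pair $(Z,Z^{(n-1)})$.) Applying this with $Z=B\Gamma^{(n+1)}$, which is $(n+1)$-dimensional, it remains to show $per(\beta)=0$ in $KO_n(B\Gamma^{(n+1)})$.

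Here the hypothesis enters. By naturality of $per$ together with the standard compatibility $per([M]_{ko})=[M]_{KO}$ of fundamental classes (valid since $M$, being $KO$-orientable, is $ko$-orientable), $per(\beta)$ is the image of $u'_*([M]_{KO})\in KO_n(B\Gamma^{(n)})$ under $KO_n(B\Gamma^{(n)})\to KO_n(B\Gamma^{(n+1)})$. Now $M$ is $KO$-inessential, so $u'_*([M]_{KO})$ maps to $0$ in $KO_n(B\Gamma)$; applying the $1$-Step Stabilization property of $\Gamma$ for $KO_*$ with $m=n$, $k=n$, this vanishing already takes place in $KO_n(B\Gamma^{(n+1)})$, i.e. $per(\beta)=0$. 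Hence $\beta=0$, so $u_*([M]_{ko})=\iota_*(\alpha)=0$; thus $M$ is $ko$-inessential, and Proposition~\ref{ko-in} shows $M$ is inessential.

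The one genuinely delicate point is the comparison fact of the second paragraph --- the injectivity of $per$ on $ko_n$ of an $(n+1)$-dimensional complex, which is exactly where the dimension of $M$ is consumed; everything else is cellular approximation plus two complementary applications of $1$-Step Stabilization: the $KO$-version (the hypothesis) pushes the vanishing of $u_*([M]_{KO})$ down from $B\Gamma$ to the $(n+1)$-skeleton, where $per$ is injective, and the automatic $ko$-version then lifts the resulting vanishing back up to $B\Gamma$.
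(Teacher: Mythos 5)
Your proof is correct, but it takes a genuinely different route from the paper's. The paper passes to the quotient complex $B\Gamma/B\Gamma^{(n-1)}$ (using Proposition~\ref{quotients} to transfer the stabilization hypothesis there), applies Propositions~\ref{moore} and~\ref{moor} to the $(n-1)$-connected $(n+1)$-dimensional complex $B\Gamma^{(n+1)}/B\Gamma^{(n-1)}$ to make $per$ an isomorphism, then drops all the way down to ordinary homology and invokes Theorem~\ref{ref} via the injectivity of $H_n(B\Gamma^{(n+1)})\to H_n(B\Gamma^{(n+1)}/B\Gamma^{(n-1)})$. You instead stay inside the skeletal filtration $B\Gamma^{(n)}\subset B\Gamma^{(n+1)}\subset B\Gamma$, prove the general comparison fact that $per\colon ko_n(Z)\to KO_n(Z)$ is injective for any $(n+1)$-dimensional $Z$ by a cofiber-plus-Atiyah--Hirzebruch argument, and finish by establishing $ko$-inessentiality and citing Proposition~\ref{ko-in}. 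Both are valid; yours is structurally leaner (no quotient-complex bookkeeping, no descent to $H_*$) at the cost of introducing the AHSS computation for the cofiber of $per$, which the paper sidesteps by quotienting out low skeleta so that Proposition~\ref{moor} applies directly. Two small remarks: (a) your first appeal to the $1$-Step Stabilization property for $ko_*$ is unnecessary --- the implication $\beta=0\Rightarrow \iota_*(\alpha)=0$ you actually use is the trivial factoring direction, not the stabilization direction; (b) your parenthetical ``alternative'' derivation of the comparison fact via a Five Lemma over the pair $(Z,Z^{(n-1)})$ has a gap, since $per$ on $ko_n(Z^{(n-1)})$ and $ko_{n-1}(Z^{(n-1)})$ is not automatically an isomorphism, so the Five Lemma hypotheses are not visibly satisfied; your primary cofiber/AHSS argument, however, is sound.
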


\begin{prop}\label{strong iness}
Suppose that an inessential and $KO$-inessential $n$-manifold $M$ has the fundamental group $\Gamma$, which satisfies the 1-Step Stabilization property for $KO_*$
for all $m$ with $k=m+1$. Then $M$ is strongly inessential.
\end{prop}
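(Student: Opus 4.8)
The plan is to reduce the strong inessentiality of $M$ to the vanishing of the second obstruction as packaged in Lemma~\ref{2-obstruction}, and then use the $1$-Step Stabilization property (with $k=m+1$) to push the relevant $KO$-homology class from the classifying space down along the skeletal filtration until it dies. Since $M$ is $KO$-inessential, by Theorem~\ref{Ros}-style reasoning (more precisely, since $[M]_{KO}$ maps to a $ko$-class only after care — but here we are already given $KO$-inessentiality and separately inessentiality) we first observe that $M$ being inessential lets us choose, by Proposition~\ref{ref2}, a cellular classifying map $u:M\to B\Gamma$ with $u(M,M^{(n-1)})\subset(B\Gamma^{(n-1)},B\Gamma^{(n-2)})$. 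In particular $u$ factors up to homotopy through $B\Gamma^{(n)}$, and the strong inessentiality criterion of Lemma~\ref{2-obstruction} applies: it suffices to show $j_*u_*([M]_{ko})=0$ in $ko_n(B\Gamma^{(n)}/B\Gamma^{(n-2)})$, where $j:B\Gamma^{(n)}\to B\Gamma^{(n)}/B\Gamma^{(n-2)}$ is the quotient map. By the computation inside the proof of Lemma~\ref{2-obstruction}, the group $ko_n(B\Gamma^{(n)}/B\Gamma^{(n-2)})$ is isomorphic to the corresponding $KO_*$-group and to the stable homotopy group, so it is equivalent to work with $KO_n$ throughout.

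Next I would move the class $u_*([M]_{KO})\in KO_n(B\Gamma^{(n)})$ into the full classifying space: since $M$ is $KO$-inessential, $u_*([M]_{KO})=0$ in $KO_n(B\Gamma)$. Now I invoke the $1$-Step Stabilization property for $\Gamma$ with $m=n$ and $k=m+1=n+1$ — but the dimension I need is $k=n$, not $n+1$. The fix is to pass to the quotient: let $Y=B\Gamma/B\Gamma^{(n-2)}$; by Proposition~\ref{quotients}, $Y$ inherits the $1$-Step Stabilization property for $KO_*$ with the same restricted range of $k$. In $Y$ the $n$-skeleton is $Y^{(n)}=B\Gamma^{(n)}/B\Gamma^{(n-2)}$ and the $(n+1)$-skeleton is $B\Gamma^{(n+1)}/B\Gamma^{(n-2)}$. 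The class $j_*u_*([M]_{KO})$ lies in $KO_n(Y^{(n)})$ and maps to $0$ in $KO_n(Y)$. I claim it already maps to $0$ in $KO_n(Y^{(n+1)})$. This is where the index shift matters: because $ko_*$ (hence the image of $ko_*$ in $KO_*$ in this range) is connective and the relevant classes come from stable homotopy, the obstruction to vanishing lives in dimension $n$ but is detected one skeleton up — precisely the statement that the kernel of $KO_n(Y^{(n+1)})\to KO_n(Y)$ needs controlling, and that is the $1$-Step Stabilization hypothesis applied with $m=n$, $k=n+1$...

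Let me state the mechanism cleanly: the $1$-Step Stabilization property with $k=m+1$ says $\Ker\bigl(KO_{m+1}(Y^{(m)})\to KO_{m+1}(Y^{(m+1)})\bigr)=\Ker\bigl(KO_{m+1}(Y^{(m)})\to KO_{m+1}(Y)\bigr)$; applied with $m=n-1$ this controls $KO_n$ of the pair $(Y^{(n)},Y^{(n-1)})$ via the exact sequence, letting one conclude that a class in $KO_n(Y^{(n)})$ dying in $KO_n(Y)$ already dies in $KO_n$ of a fixed finite skeleton, and in fact — using that $Y^{(n-1)}=B\Gamma^{(n-1)}/B\Gamma^{(n-2)}$ is a wedge of $(n-1)$-spheres so that $KO_n(Y^{(n)})\hookleftarrow$ behaves predictably — that it dies in $KO_n(Y^{(n)})$ itself, i.e. $j_*u_*([M]_{KO})=0$. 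Transporting back through the isomorphism $KO_n(B\Gamma^{(n)}/B\Gamma^{(n-2)})\cong ko_n(B\Gamma^{(n)}/B\Gamma^{(n-2)})$ gives $j_*u_*([M]_{ko})=0$, and Lemma~\ref{2-obstruction} finishes the proof.

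\medskip
The main obstacle I anticipate is the bookkeeping of \emph{which} skeleton and \emph{which} degree the stabilization hypothesis must be applied in: the naive application gives vanishing in $KO_n(Y^{(n+1)})$ but Lemma~\ref{2-obstruction} wants vanishing in $KO_n(Y^{(n)})$, and bridging that last step requires either (a) a separate argument that $KO_n(Y^{(n)})\to KO_n(Y^{(n+1)})$ is injective — plausible since attaching $(n+1)$-cells to $Y^{(n)}$ only affects $KO_n$ through the boundary map from the free abelian group on $(n+1)$-cells, and the relevant differential may be shown to be zero on the image of $j_*j'_*$ using that $M$ has a single $n$-cell — or (b) carefully re-reading Lemma~\ref{2-obstruction}'s ``Moreover'' clause, which only requires $j_*u_*([M]_{ko})=0$ in $ko_n(B\Gamma^{(n)}/B\Gamma^{(n-2)})$ and not in a higher skeleton, so the index matching has to be exact. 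Getting the degree and skeleton indices to line up — and justifying why $k=m+1$ (rather than $k=m$, which is Proposition~\ref{KO-iness}'s hypothesis for mere inessentiality) is the right shift for the \emph{second} obstruction — is the technical heart of the argument.
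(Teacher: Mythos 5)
Your overall plan (reduce to Lemma~\ref{2-obstruction}, pass to $Y=B\Gamma/B\Gamma^{(n-2)}$ via Proposition~\ref{quotients}, use the $ko_n\cong KO_n$ isomorphism on the relevant quotient, and invoke $1$-Step Stabilization) matches the paper's, but there is a concrete index slip that creates the ``bridging'' gap you end up confessing, and the fix is not either of your proposed patches (a) or (b). You are tracking the class $j_*u_*([M]_{KO})$ as living in $KO_n(Y^{(n)})$ and then trying to apply Stabilization with $m=n$, $k=n+1$, which forces you to a vanishing statement in $KO_n(Y^{(n+1)})$ — one skeleton too high. The point you are missing is that Proposition~\ref{ref2} gives \emph{both} inclusions $u(M^{(n-1)})\subset B\Gamma^{(n-2)}$ \emph{and} $u(M)\subset B\Gamma^{(n-1)}$, so the induced map $\bar u:M/M^{(n-1)}=S^n\to B\Gamma^{(n-1)}/B\Gamma^{(n-2)}=Y^{(n-1)}$ lands in the $(n-1)$-skeleton of $Y$, not the $n$-skeleton. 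Hence $\bar u_*(1)\in KO_n\bigl(Y^{(n-1)}\bigr)$.

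With the class placed correctly, everything lines up without any bridging: $KO$-inessentiality says $(\bar\phi_{n-1,\infty})_*\bar u_*(1)=0$ in $KO_n(Y)$, and the $1$-Step Stabilization hypothesis on $Y$ (transferred from $B\Gamma$ by Proposition~\ref{quotients}), applied at $m=n-1$ with $k=m+1=n$, gives $(\bar\phi_{n-1,n})_*\bar u_*(1)=0$ already in $KO_n\bigl(Y^{(n)}\bigr)=KO_n\bigl(B\Gamma^{(n)}/B\Gamma^{(n-2)}\bigr)$. Now $Y^{(n)}$ is an $(n-2)$-connected CW complex of dimension $n$, so by Propositions~\ref{moore} and~\ref{moor} the map $per:ko_n(Y^{(n)})\to KO_n(Y^{(n)})$ is an isomorphism; transporting back gives $i_*\bar u_*(1)=0$ in $ko_n$, which is exactly the hypothesis of the ``Moreover'' clause of Lemma~\ref{2-obstruction}. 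So the shift $k=m+1$ is forced precisely because the classifying map lands in the $(n-1)$-skeleton while the fundamental class lives in degree $n$ — a one-dimensional mismatch baked into the second-obstruction picture — and once you start from $Y^{(n-1)}$ rather than $Y^{(n)}$ the indices close up and no auxiliary injectivity of $KO_n(Y^{(n)})\to KO_n(Y^{(n+1)})$ is needed.
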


Proposition~\ref{KO-iness} and Proposition~\ref{strong iness} lead to the main result of the paper.
\begin{thm}\label{m}
Suppose that a group $\Gamma$ has a finite index subgroup $\Gamma'$ with the 1-Step Stabilization property for $KO_*$  for all $m$ with $k\in K=\{m,m+1\}$ and 
suppose that $\Gamma'$ satisfies the Strong Novikov Conjecture.
Then the original Gromov conjecture holds for all spin $n$-manifolds, $n> 4$, with the fundamental group $\Gamma$.
\end{thm}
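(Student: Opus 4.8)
The plan is to assemble the ingredients prepared in the previous sections: reduce the assertion for $\Gamma$ to the Strong Gromov Conjecture for the finite-index subgroup $\Gamma'$, establish $KO$-inessentiality via Rosenberg's theorem, upgrade $KO$-inessentiality to inessentiality, and then upgrade inessentiality to strong inessentiality using the two 1-Step Stabilization hypotheses.

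First I would carry out the finite-cover reduction. Let $M$ be a closed spin $n$-manifold with $n>4$, $\pi_1(M)=\Gamma$, equipped with a PSC metric, and let $p\colon M'\to M$ be the finite covering corresponding to the finite-index subgroup $\Gamma'\le\Gamma$. Then $M'$ is again a closed $n$-manifold, it is spin (pull back the spin structure along $p$), it carries the PSC metric $p^*\gmetric$, and $\pi_1(M')=\Gamma'$. Moreover the universal covering of $M'$, with the lifted metric, coincides with that of $M$, so $\dim_{mc}\Wi{M'}=\dim_{mc}\Wi M$. Hence it suffices to prove $\dim_{mc}\Wi{M'}\le n-2$; since $M'$ is strongly inessential would already give this (a lift of a classifying map deformed into $(B\Gamma')^{(n-2)}$ yields a uniformly cobounded proper map $\Wi{M'}\to (E\Gamma')^{(n-2)}$), it is enough to verify the Strong Gromov Conjecture for spin PSC manifolds with fundamental group $\Gamma'$ — which, as noted in the introduction, implies the original Gromov Conjecture for every group containing $\Gamma'$ with finite index.

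Next I would run the obstruction chain on $M'$. By the Rosenberg Index Theorem (Theorem~\ref{Ros}), since $\Gamma'$ satisfies the Strong Novikov Conjecture and $M'$ is a spin PSC manifold, $M'$ is $KO$-inessential. Applying Proposition~\ref{KO-iness}, and using that $\Gamma'$ has the 1-Step Stabilization property for $KO_*$ for all $m$ with $k=m$ (one half of the hypothesis $k\in K=\{m,m+1\}$), we conclude that $M'$ is inessential. Now $M'$ is both inessential and $KO$-inessential, $n>4$, and $\Gamma'$ has the 1-Step Stabilization property for $KO_*$ for all $m$ with $k=m+1$ (the other half of $K$); Proposition~\ref{strong iness} then gives that $M'$ is strongly inessential, whence $\dim_{mc}\Wi{M'}\le n-2$ and therefore $\dim_{mc}\Wi M\le n-2$.

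The substantive work is entirely contained in Propositions~\ref{KO-iness} and~\ref{strong iness} — which themselves rest on Lemma~\ref{2-obstruction}, where the restriction $n>4$ is what allows the obstruction class to be detected in $ko$-homology — so the only genuinely new point here is the finite-cover reduction. The mild thing to be careful about is that the 1-Step Stabilization property and the Strong Novikov Conjecture are hypotheses on $\Gamma'$ and not on $\Gamma$, which is precisely why passing to the cover $M'$ is forced; once that is in place, the rest is a direct chain of implications, and I would not expect any further obstacle. (If $\Gamma$ happens to be finite the statement is trivial anyway, since then $\Wi M$ is compact and $\dim_{mc}\Wi M=0\le n-2$.)
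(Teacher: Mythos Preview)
Your proof is correct and follows essentially the same approach as the paper: pass to the finite cover $M'$ with fundamental group $\Gamma'$, apply Rosenberg's theorem to get $KO$-inessentiality, then invoke Propositions~\ref{KO-iness} and~\ref{strong iness} in turn to obtain strong inessentiality of $M'$, and conclude via the shared universal cover. The only differences are cosmetic --- you supply a bit more detail on the finite-cover reduction and the role of $n>4$ --- but the logical skeleton is identical.
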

\begin{proof}
Let $M$ be a positive scalar curvature spin $n$-manifold with $\pi_1(M)=\Gamma$. Let $M'\to M$ be a finite covering manifold with the fundamental group $\Gamma'$.
The Riemannian metric on $M'$ lifted from $M$ has a positive scalar curvature.
By Rosenberg's theorem (Theorem~\ref{Ros}) $M'$ is $KO$-inessential. 
By Proposition~\ref{KO-iness} $ M'$ is inessential. By Proposition~\ref{strong iness} $ M'$ is strongly inessential. This completes the proof since $M$ and $M'$ have same universal cover.
\end{proof}

\

\

{\em Proof of Proposition~\ref{KO-iness}}

Let $\dim M=n$. By Proposition~\ref{quotients} $B\Gamma/B\Gamma^{(n-1)}$ has the
1-Step Stabilization property property for $KO_*$ with $m=n$ and $k=n$.
We may assume that $M$ has one $n$-dimensional cell and $u(M,M^{(n-1)})\subset (B\Gamma^{(n)},B\Gamma^{(n-1)})$.
Since $u_*([M]_{KO})=0$ in $KO_n (B\Gamma)$, we obtain $q_*u_*([M]_{KO})=0$ in $KO_n (B\Gamma/B\Gamma^{(n-1)})$
where $q:B\Gamma\to B\Gamma/B\Gamma^{(n-1)}$ is the quotient map. 
Let $q'$ denote the restriction  of $q$ to $B\Gamma^{(n+1)}$.
Since $q'_*u_*([M]_{KO})$ lies in the image of the induced homomorphism for
the inclusion $$B\Gamma^{(n)}/B\Gamma^{(n-1)}\to B\Gamma^{(n+1)}/B\Gamma^{(n-1)},$$
by the 1-step Stabilization property applied for $m=n$ with $k=n$ we obtain $q'_*u_*([M]_{KO})=0$  in
$KO_n(B\Gamma^{(n+1)}/B\Gamma^{(n-1)})$.

  Let $\bar u:M/M^{(n-1)}=S^n\to B\Gamma^{(n+1)}/B\Gamma^{(n-1)}$ be
the induced map by the classifying map $u$.  In the commutative diagram
$$
\begin{CD}
ko_n(M) @>j'_*>> ko_n(S^n)@>\bar u_*>> ko_n(B\Gamma^{(n+1)}/B\Gamma^{(n-1)})\\
@. @VperV{\cong}V @V{\cong}VperV\\
KO_n(M) @>j'_*>> KO_n(S^n)@>\bar u_*>> KO_n(B\Gamma^{(n+1)}/B\Gamma^{(n-1)})\\
\end{CD}
$$
the $per$  homomorphisms are isomorphisms in view of Proposition~\ref{moore} and Proposition~\ref{moor}. Since $per\circ j'_*([M]_{ko})=j'_*([M]_{KO})$, this
implies that  $\bar u_*j_*'([M]_{ko})=0$ in $ko_n(B\Gamma^{(n+1)}/B\Gamma^{(n-1)})$. 
In view of the natural transformation of homology theories $ko_*\to H_*(\ ;\mathbb Z)$  it follows that $\bar u_*j_*'([M])=0$ in $H_n(B\Gamma^{(n+1)}/B\Gamma^{(n-1)})$.
Since the homomorphism $q'_*:H_n(B\Gamma^{(n+1)})\to H_n(B\Gamma^{(n+1)}/B\Gamma^{(n-1)})$ is injective, we obtain that $u_*([M])=0$.
Theorem~\ref{ref} completes the proof.
\qed

\

{\em Proof of Proposition~\ref{strong iness}}

Let $\dim M=n$. By Proposition~\ref{quotients} $B\Gamma$ has the 1-Step Stabilization property for $KO_*$ with $k=m+1$.
We may assume that $M$ has a CW complex structure with one $n$-dimensional cell. Let $\psi:D^n\to M$ be its characteristic map.
By Proposition~\ref{ref2} we may assume that the classifying map $u$ satisfies the condition $u(M^{(n-1)})\subset B\Gamma^{(n-2)}$ and $u(M)\subset B\Gamma^{(n-1)}$.

Note that in the diagram
$$
\begin{CD}
ko_n(M/M^{(n-1)}) @>\bar u_*>>ko_n(B\Gamma^{(n-1)}/B\Gamma^{(n-2)}) @>(\bar\phi_{n-1,n})_*>> ko_n(B\Gamma^{(n)}/B\Gamma^{(n-2)})\\
@V\cong VV  @V\cong VV @V\cong VV\\
KO_n(M/M^{(n-1)}) @>\bar u_*>> KO_n(B\Gamma^{(n-1)}/B\Gamma^{(n-2)})@>(\bar\phi_{n-1,n})_*>> KO_n(B\Gamma^{(n)}/B\Gamma^{(n-2)}) \\
\end{CD}
$$
the right vertical arrow is an isomorphism in view of Propositions~\ref{moore} and~\ref{moor}. 
The 1-Step Stabilization property of $B\Gamma/B\Gamma^{(n-2)}$ applied with $m=n-1$ and $k=n$ implies that $(\bar\phi_{n-1,n})_*\bar u_*(1)=0$ for $KO_*$. The above diagram implies that $i_*\bar u_*(1)=0$
for $ko_*$. Lemma~\ref{2-obstruction} completes the proof.
\qed

\

\section{Applications to the  Gromov Conjecture}
For applications of Theorem~\ref{m} to Gromov's conjecture we use the following stronger condition:

$(^*)_s$\ \ \ {\em The  classifying space $B\Gamma$ has the stable homotopy type of  wedge of spheres.}

We recall that this means that the spectrum generated by $B\Gamma$ is the wedge sum of sphere spectra.

We note that in the case when $B\Gamma$ is a finite complex there is $\ell$ such that $\Sigma^\ell B\Gamma$ is homotopy equivalent to $\Sigma^\ell(\vee_j S^{n_j})$~\cite{BCM}.

\begin{prop}\label{imp}
For geometrically finite groups property  $(^*)_s$ implies the 1-Step Stabilization property for any homology theory $h_*$.
\end{prop}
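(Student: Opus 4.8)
The plan is to reduce the claim to a statement about each skeleton separately and then exploit the splitting provided by $(^*)_s$. Suppose $B\Gamma$ is a geometrically finite classifying complex with the stable homotopy type of a wedge of spheres, say the suspension spectrum $\Sigma^\infty B\Gamma_+$ (or $\Sigma^\infty B\Gamma$ after collapsing a point) splits as $\bigvee_j \mathbb S^{n_j}$ up to stable equivalence. First I would fix $m$ and compare the inclusion $\phi_{m,m+1}\colon B\Gamma^{(m)}\to B\Gamma^{(m+1)}$ with the inclusion into the whole space $\phi_{m,\infty}\colon B\Gamma^{(m)}\to B\Gamma$. The content of the $1$-Step Stabilization property is the equality $\Ker(\phi_{m,m+1})_*=\Ker(\phi_{m,\infty})_*$ on $h_*$, and since $\Ker(\phi_{m,m+1})_*\subseteq\Ker(\phi_{m,\infty})_*$ always holds, only the reverse inclusion needs proof: an element of $h_k(B\Gamma^{(m)})$ that dies in $h_k(B\Gamma)$ already dies in $h_k(B\Gamma^{(m+1)})$.

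The key step is that, under $(^*)_s$, the map $\phi_{m,m+1}$ is, after one more suspension, a retract onto a wedge summand of $B\Gamma$ in the stable category. Concretely: a wedge-of-spheres splitting of $\Sigma^\infty B\Gamma$ can be arranged to be compatible with the skeletal filtration, because the cells of $B\Gamma$ are finite in number and the stable splitting is detected on homology; choosing the spheres $\mathbb S^{n_j}$ to correspond to a basis of $H_*(B\Gamma;\mathbb Z)$ adapted to the skeleta, the subwedge $\bigvee_{n_j\le m}\mathbb S^{n_j}$ receives a stable map from $\Sigma^\infty B\Gamma^{(m)}$ inducing a homology isomorphism onto the image of $H_*(B\Gamma^{(m)})\to H_*(B\Gamma)$, and this subwedge maps into $\Sigma^\infty B\Gamma^{(m+1)}$ and splits off $\Sigma^\infty B\Gamma$. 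For any homology theory $h_*$, applying $h_*$ to this retraction shows that the image of $h_*(B\Gamma^{(m)})$ in $h_*(B\Gamma)$ is already faithfully seen inside $h_*(B\Gamma^{(m+1)})$; equivalently, if $x\in h_k(B\Gamma^{(m)})$ maps to zero in $h_k(B\Gamma)$, then composing with the retraction $h_k(B\Gamma)\to h_k(\bigvee_{n_j\le m}\mathbb S^{n_j})\hookrightarrow h_k(B\Gamma^{(m+1)})$ recovers $(\phi_{m,m+1})_*(x)$, which is therefore zero. I would also note that the restricted version, where one only asks for the property on a prescribed set $K(m)$ of degrees $k$, follows verbatim since the argument is degreewise.

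The main obstacle is the compatibility claim: a $(^*)_s$ splitting is a priori only a stable equivalence $\Sigma^\infty B\Gamma\simeq\bigvee_j\mathbb S^{n_j}$ with no reference to skeleta, and I must promote it to one respecting the filtration. For geometrically finite $\Gamma$ this is where finiteness is used. One route is to induct on skeleta: given a splitting of $\Sigma^\infty B\Gamma^{(m+1)}$ adapted to its filtration, the cofiber sequence $B\Gamma^{(m+1)}\to B\Gamma^{(m+2)}\to \bigvee S^{m+2}$ has a section after one suspension precisely because the attaching maps of the $(m+2)$-cells become stably trivial once $B\Gamma$ itself is stably a wedge of spheres (the connecting map $\bigvee S^{m+2}\to \Sigma B\Gamma^{(m+1)}$ must vanish stably, as it vanishes on homology and the target is stably a wedge of spheres on which a Hurewicz-type argument applies). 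Carrying this induction carefully, and checking that the resulting subwedges are nested correctly, is the technical heart; everything after it is a diagram chase with the retraction, valid for an arbitrary $h_*$.
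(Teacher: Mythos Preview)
Your strategy is right: route $(\phi_{m,m+1})_*$ through the subwedge $\bigvee_{n_j\le m}\mathbb S^{n_j}$, which sits as a retract inside $\bigvee_j\mathbb S^{n_j}\simeq\Sigma^\infty B\Gamma$. But the inductive justification in your third paragraph breaks at two points. First, the connecting map $\bigvee S^{m+2}\to\Sigma B\Gamma^{(m+1)}$ does \emph{not} vanish on homology in general: on $H_{m+2}$ it is the cellular boundary $C_{m+2}(B\Gamma)\to H_{m+1}(B\Gamma^{(m+1)})$, whose image is the group of $(m{+}1)$-boundaries, and the cellular differential $\partial_{m+2}$ has no reason to be zero merely because $H_*(B\Gamma)$ is free (any non-minimal CW structure already gives counterexamples). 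Second, even granting zero on homology, a stable map $S^{m+2}\to\bigvee_i S^{k_i}$ with $k_i\le m+2$ need not be null: the components landing in lower-dimensional spheres can be nontrivial elements of $\pi^s_{>0}$, so no Hurewicz-type argument applies. Thus the induction never starts, and you have not shown that the skeleta $B\Gamma^{(m)}$ are themselves stably wedges of spheres---nor do you need to.

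The compatibility you are after comes for free from cellular approximation, with no induction. Pick an honest homotopy equivalence $f:\Sigma^\ell B\Gamma\to\bigvee_j S^{n_j+\ell}$ with homotopy inverse $g$, both cellular for the obvious CW structures. Then $f$ carries $\Sigma^\ell B\Gamma^{(m)}$ into $\bigvee_{n_j\le m}S^{n_j+\ell}$ and $g$ carries that subwedge back into $\Sigma^\ell B\Gamma^{(m)}$; making the homotopy $gf\simeq\id$ cellular gives $\Sigma^\ell\phi_{m,m+1}\circ(g|)(f|)\simeq\Sigma^\ell\phi_{m,m+1}$. Hence if $(\phi_{m,m+1})_*(a)\ne0$ then $(f|)_*(a)\ne0$, and since the inclusion $\xi$ of the subwedge into the full wedge is split injective on any $h_*$, one obtains $(\phi_{m,\infty})_*(a)=g_*\xi_*(f|)_*(a)\ne0$. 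This is exactly the paper's argument; the ``main obstacle'' you identified dissolves once you use cellularity instead of induction.
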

\begin{proof}
Let $f:\Sigma^\ell B\Gamma\to\Sigma^\ell(\vee S^{n_j})$ be a homotopy equivalence with a homotopy inverse map $g$. We may assume that $f$ and $g$ are cellular maps
with respect to the natural CW structure on $\Sigma^\ell S^{n_j}=S^{n_j+\ell}$. Then the maps $\Sigma^\ell\phi_{m,m+1}$ and $\Sigma^\ell\phi_{m,m+1}(g|)(f|)$ 
are homotopic where $f|$ and $g|$ are the
restrictions of $f$ and $g$ to the $(m+\ell)$-skeleton. Then
for any nontrivial element $a\in h_{k+\ell}(\Sigma^\ell B\Gamma^{(m)})$ with $(\Sigma^\ell\phi_{m,m+1})_*(a)\ne 0$ we have $(f|)_*(a)\ne 0$. In this diagram
$$
\begin{CD}
h_{k+\ell}(\Sigma^\ell B\Gamma^{(m)}) @>(\Sigma^\ell\phi_{m,m+1})_*>> h_{k+\ell}(\Sigma^\ell B\Gamma^{(m+1)}) @>(\Sigma^\ell\phi_{m+1,\infty})_*>> h_{k+\ell}(\Sigma^\ell B\Gamma)\\
@Vf|_*VV @. @Ag_*A\cong A\\
h_{k+\ell}(\bigvee_{n_j\le m} S^{n_j+\ell}) @>\xi^0_*>> h_{k+\ell}(\bigvee_{n_j\le m+1} S^{n_j+\ell}) @>\xi^1_*>> h_{k+\ell}(\bigvee S^{n_j+\ell}).\\
\end{CD}
$$
Here the maps $\xi^0$ and $\xi^1$ are inclusions of a wedge in a large wedge.
Since $\xi^1_*\xi^0_*$ is injective, it follows that $$(\phi_{m+1,\infty})_*(\phi_{m,m+1})_*(a)\ne 0.$$
Thus, the 1-Step Stabilization property is verified.
\end{proof}

\begin{prop}\label{oper}
Let $\Gamma_1$ and $\Gamma_2$ be geometrically finite groups satisfying the condition $(^*)_s$, then the free product $\Gamma_1\ast\Gamma_2$ and the product $\Gamma_1\times\Gamma_2$ satisfy $(^*)_s$.
\end{prop}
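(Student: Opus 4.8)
The plan is to reduce each statement to a standard description of the classifying space of a free product, respectively of a direct product, and then to track what happens to a wedge of sphere spectra under wedge and under smash product.

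The free product case is essentially immediate. One has the well-known homotopy equivalence $B(\Gamma_1\ast\Gamma_2)\simeq B\Gamma_1\vee B\Gamma_2$, since the wedge of two aspherical CW complexes is aspherical with fundamental group the free product (by van Kampen, together with the observation that its universal cover is a contractible tree-like union of the universal covers of the $B\Gamma_i$). Hence $\Sigma^\infty B(\Gamma_1\ast\Gamma_2)\simeq\Sigma^\infty B\Gamma_1\vee\Sigma^\infty B\Gamma_2$, and since each $\Sigma^\infty B\Gamma_i$ is a wedge of sphere spectra by hypothesis, so is their wedge. Thus $\Gamma_1\ast\Gamma_2$ satisfies $(^*)_s$.

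For the direct product I would use $B(\Gamma_1\times\Gamma_2)=B\Gamma_1\times B\Gamma_2$ together with the classical stable splitting of a product of well-pointed spaces,
$$\Sigma^\infty(X\times Y)\simeq\Sigma^\infty X\vee\Sigma^\infty Y\vee\Sigma^\infty(X\wedge Y),$$
which comes from the cofibre sequence $X\vee Y\hookrightarrow X\times Y\to X\wedge Y$ splitting after one suspension via the two projections and the quotient map. Applying this with $X=B\Gamma_1$ and $Y=B\Gamma_2$, the first two summands are wedges of sphere spectra by hypothesis. For the third summand, write $\Sigma^\infty B\Gamma_1\simeq\bigvee_\alpha\Sigma^{a_\alpha}\mathbb S$ and $\Sigma^\infty B\Gamma_2\simeq\bigvee_\beta\Sigma^{b_\beta}\mathbb S$; then, using $\Sigma^\infty(X\wedge Y)\simeq\Sigma^\infty X\wedge\Sigma^\infty Y$ and the fact that smash product commutes with wedges in the stable homotopy category (as $-\wedge E$ is a left adjoint), one obtains
$$\Sigma^\infty(B\Gamma_1\wedge B\Gamma_2)\simeq\Big(\bigvee_\alpha\Sigma^{a_\alpha}\mathbb S\Big)\wedge\Big(\bigvee_\beta\Sigma^{b_\beta}\mathbb S\Big)\simeq\bigvee_{\alpha,\beta}\Sigma^{a_\alpha+b_\beta}\mathbb S,$$
again a wedge of sphere spectra. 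Hence all three summands of $\Sigma^\infty(B\Gamma_1\times B\Gamma_2)$ are wedges of sphere spectra, so $\Gamma_1\times\Gamma_2$ satisfies $(^*)_s$. Note moreover that $B\Gamma_1\vee B\Gamma_2$ and $B\Gamma_1\times B\Gamma_2$ are finite whenever the $B\Gamma_i$ are, so both operations also preserve geometric finiteness.

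I do not expect a serious obstacle here; the two points that need care are the stable splitting of a product, which is classical, and the distributivity of smash product over wedges of sphere spectra. If one prefers to avoid spectra, one can instead invoke the finite suspension equivalences $\Sigma^\ell B\Gamma_i\simeq\Sigma^\ell(\bigvee_j S^{n_j})$ that $(^*)_s$ provides for finite complexes (cf.\ \cite{BCM}) and run the identical computation with finite suspensions, where both points become elementary.
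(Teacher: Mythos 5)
Your argument matches the paper's proof: both use $B(\Gamma_1\ast\Gamma_2)\simeq B\Gamma_1\vee B\Gamma_2$ for the free product, and for the direct product both use $B(\Gamma_1\times\Gamma_2)=B\Gamma_1\times B\Gamma_2$ together with the splitting $\Sigma(X\times Y)\simeq\Sigma X\vee\Sigma Y\vee\Sigma(X\wedge Y)$ and the fact that a smash product of wedges of spheres is again a wedge of spheres. The only cosmetic difference is that you phrase the main computation in the language of suspension spectra while the paper works directly with finite suspensions $\Sigma^{\ell_1+\ell_2}$, an alternative you already note at the end.
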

\begin{proof}
Let  $\Sigma^{\ell_1} B\Gamma_1$ and $\Sigma^{\ell_2} B\Gamma_2$ be homotopy equivalent to $\vee S^{n_j}$ and $\vee S^{m_i}$ respectively. We have $ B \left (\Gamma_1\ast \Gamma_2 \right)=B\Gamma_1\vee B\Gamma_2$, and thus for any $k$, $$ \Sigma^k B \left (\Gamma_1\ast \Gamma_2 \right)=\Sigma^kB\Gamma_1\vee \Sigma^k B\Gamma_2 ,$$ Taking $k=max \{\ell_1,\ell_2 \} ,$ we have the result for $\Gamma_1\ast\Gamma_2$.\\\

We note that $ B \left (\Gamma_1\times \Gamma_2 \right)=B\Gamma_1\times B\Gamma_2  $. The suspension of product of CW complexes formula~\cite{Ha}, $\Sigma(X\times Y)=\Sigma X\vee \Sigma Y\vee \Sigma (X\wedge Y)$ gives us 
$$\Sigma^{\ell_1+\ell_2}B \left (\Gamma_1\times \Gamma_2 \right)=\Sigma^{\ell_1+\ell_2}B\Gamma_1\vee \Sigma^{\ell_1+\ell_2} B\Gamma_2 \vee \Sigma^{\ell_1+\ell_2}(B\Gamma_1\wedge B\Gamma_2). $$ 
The first two components on the right side are wedge sums of spheres. Since taking the reduced $k$-suspension is  equivalent to the smash product with $S^k$, we obtain for the remaining component  $$\Sigma^{\ell_1+\ell_2}(B\Gamma_1\wedge B\Gamma_2)=S^{\ell_1+\ell_2}\wedge(B\Gamma_1\wedge B\Gamma_2)=(S^{\ell_1}\wedge B\Gamma_1)\wedge (S^{\ell_2}\wedge B\Gamma_2).$$
Here we used associativity and commutativity of smash product and the fact that $S^{\ell_1+\ell_2}=S^{\ell_1}\wedge S^{\ell_2}$. Thus, the right hand side is the same as $(\Sigma^{\ell_1} B\Gamma_1)\wedge (\Sigma^{\ell_2} B\Gamma_2)$, which is equivalent to $$(\bigvee S^{n_j}) \wedge (\bigvee S^{m_i})=\bigvee (S^{n_j}\wedge (\bigvee S^{m_i}))=\bigvee (S^{n_j}\wedge S^{m_i})=\bigvee S^{n_j+m_i}.$$ 
\end{proof}

\begin{prop}
The surface groups $\pi_1(M_g)$ for all $g\ge0$, satisfy the condition $(^*)_s$.
\end{prop}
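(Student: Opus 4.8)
The plan is to reduce immediately to the orientable case, then handle $g = 0$ and $g = 1$ separately as degenerate cases before treating the main case $g \ge 2$. For $g = 0$ the group is trivial, $B\Gamma$ is a point, and $(^*)_s$ holds vacuously (the empty wedge). For $g = 1$ the group is $\Z^2 = \Z \times \Z$, and since $\Z$ is a geometrically finite group with $B\Z = S^1 \simeq S^1$ (a single sphere), Proposition~\ref{oper} applied to $\Z \times \Z$ gives $(^*)_s$ for the torus group. So it remains to treat $g \ge 2$, where $B\Gamma = M_g$ is the closed orientable surface of genus $g$ itself, a finite $2$-complex. If the surface is nonorientable, one passes to the orientable double cover $M_h \to M_g$; but since $(^*)_s$ is a property of the classifying space and the nonorientable surface group is not a finite-index subgroup issue here, I would instead argue directly with a CW structure on the nonorientable surface $N_k$, which has one $0$-cell, $k$ $1$-cells, and one $2$-cell attached along $a_1^2 a_2^2 \cdots a_k^2$; the analysis below adapts verbatim.

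For $g \ge 2$, the surface $M_g$ has the standard CW structure with one $0$-cell, $2g$ $1$-cells, and one $2$-cell attached along the product of commutators. The $1$-skeleton is a wedge of $2g$ circles, so $\Sigma M_g^{(1)} = \vee^{2g} S^2$ is already a wedge of spheres. The key point is that after one suspension the attaching map $\partial D^2 \to M_g^{(1)}$ of the top cell, being a map $S^1 \to \vee^{2g} S^1$, has a suspension $S^2 \to \vee^{2g} S^2$ that lies in $\pi_2(\vee^{2g} S^2)$, which by the Hilton–Milnor theorem is a free abelian group generated by the inclusions of the wedge summands (there are no Whitehead products in this range since $\pi_2$ of a wedge of $2$-spheres has no contribution from triple-and-higher brackets below dimension $3$). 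Concretely, the attaching word $\prod [a_i, b_i]$ is a product of commutators, and a commutator $[a,b]$ in $\pi_1$ suspends to $a + b - a - b = 0$ in the abelian group $\pi_2(\vee S^2)$. Hence $\Sigma(\partial D^2 \to M_g^{(1)})$ is null-homotopic, so $\Sigma M_g \simeq \Sigma(M_g^{(1)}) \vee S^3 = (\vee^{2g} S^2) \vee S^3$, a wedge of spheres. This establishes $(^*)_s$ with $\ell = 1$.

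The step I expect to require the most care is the assertion that the suspended attaching map is null-homotopic: one must be sure that the identification $\pi_2(\vee^{2g} S^2) \cong \Z^{2g}$ is via the abelianization/Hurewicz map and that under the suspension homomorphism $\pi_1(\vee^{2g} S^1) \to \pi_2(\vee^{2g} S^2)$ the class of a commutator maps to zero — equivalently, that the suspension homomorphism factors through the abelianization of the free group, which is the classical fact that $\Sigma$ kills Whitehead products and that $[\Sigma X, \Sigma Y]$ is a group with the free group replaced by its abelianization in the relevant Hilton decomposition. Once that is granted, the rest is bookkeeping: the cofiber sequence $M_g^{(1)} \hookrightarrow M_g \to S^2$ suspends to a cofiber sequence that splits because the connecting map $\Sigma M_g^{(1)} \to \Sigma M_g^{(1)}$... more precisely, $\Sigma$ applied to $S^2 \xrightarrow{\Sigma(\text{attach})} \Sigma M_g^{(1)}$ is null, so the Puppe sequence gives $\Sigma M_g \simeq \Sigma M_g^{(1)} \vee \Sigma S^2$. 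This completes the proof for all $g$, and with the nonorientable modification, for all closed surface groups.
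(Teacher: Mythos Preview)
Your proof is correct and follows essentially the same route as the paper: suspend $M_g$ once, observe that the suspended attaching map $\Sigma\varphi:S^2\to\bigvee_{2g}S^2$ is null-homotopic because the attaching word is a product of commutators and $\pi_2(\bigvee_{2g}S^2)\cong\Z^{2g}$ is abelian, and conclude $\Sigma M_g\simeq(\bigvee_{2g}S^2)\vee S^3$. The paper uses Hurewicz rather than Hilton--Milnor to identify $\pi_2$ of the wedge (which is all that is needed here), and it reads off nullity by projecting to each circle and noting $\deg(p_i\varphi)=0$; your abelianization argument is equivalent.
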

\begin{proof}
The surface $M_g$ has one $0$-cell, $2g$ many $1$-cell, one $2$-cell and the cell structure is obtained by attaching the $2$-cell along $[a_1,b_1]\cdot[a_2,b_2]\cdots[a_g,b_g]$ to the $1$-skeleton, which is wedge of $2g$ circles.  Let $\varphi : S^1\to \bigvee_{2g}S^1_i$ be the attaching map for the 2-cell. We claim that $\Sigma \varphi :S^2\to \bigvee_{2g}S^2_i$ is nullhomotopic. Since $H_2(\bigvee_{2g}S^2_i)=\oplus_{i=1}^{2g} H_2(S^2_i)$, by Hurewicz Theorem $\pi_2(\bigvee_{2g} S^2_i)=\oplus_{i=1}^{2g} \pi_2(S^2_i)$. Thus,
 $$[\Sigma \varphi]=([\Sigma(p_1\varphi)],\dots,[\Sigma(p_{2g}\varphi])\in\bigoplus_{i=1}^{2g} \pi_2(S^2_i) $$ where  
$p_i:\bigvee_{2g} S^1\to S^1_i$ is the projection to $i$-th summand. Note that each $ p_{i}\circ  \varphi :S^1\to S^1$ has degree 0. Hence $deg \Sigma (p_{i}\varphi) = deg (p_{i} \varphi) =0$.  Hence $[\Sigma \varphi]=0$ and $\Sigma \varphi$ is null-homotopic. Therefore, $\Sigma M_g$ is homotopy equivalent to $\left(\bigvee_{2g}S_i^2\right)\vee S^3$.
\end{proof}

Let $X$ be a finite simplicial graph with the set of vertices $S=\{s_1,s_2,...,s_n\}$, and the set of edges $E$. We recall, that the right-angled Artin group defined by $X$ is the group with following presentation
$$
A_X=\langle S\mid [s_i,s_j]=1\ \ \text{whenever}\ \ \{s_i,s_j\}\in E\rangle.
$$

The Salvetti complex, $\mathcal{S}_X$, which is a classifying space, $\mathcal{S}_X:=BA_X$ for the group $A_X$ is constructed as follows: Begin with a wedge of circles attached to a point
$x_0$ and labeled by the generators $s_1,\cdots, s_n$. For each edge, $\{s_i,s_j\}\in E$, consider the
$2$-torus obtained by attaching $2$-cell to the wedge of circles corresponding to $s_i$ and $s_j$ along the relation $[s_i, s_j]$. For each triangle in $X$ connecting three vertices $s_i, s_j , s_k$, consider the $3$-torus defined by circles corresponding to them. Continue this process, having a $k$-torus for each set of $k$ mutually commuting generators (i.e., generators spanning a complete subgraph in $X$). The resulting
space is the Salvetti complex $\mathcal{S}_X$ for $A_X$. 
\\

\begin{prop}
Suppose that $K=e^0\cup(\cup_{i\in J} e^{n_i}_i)$ is a subcomplex of an $n$-dimensional torus $T^n=S^1\times\cdots\times S^1$ with the product CW complex structure and $S^1=e^0\cup e^1$.
Then for sufficiently large $\ell$ the reduced $\ell$-suspension $\Sigma^\ell K$ is homotopy equivalent to the wedge of spheres $\vee_{i\in J}S^{n_i+\ell}$. 

Moreover, there is a cellular homotopy equivalence $f:\Sigma^\ell K\to\vee_{i\in J}S^{n_i+\ell}$ which takes the cell $\Sigma^\ell e^{n_i}_i$ to $S^{n_i+\ell}$ for all $i\in J$.
\end{prop}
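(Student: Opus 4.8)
The plan is to build the homotopy equivalence cell by cell, using the fact that a torus with its product CW structure has a very rigid attaching structure. The key observation is that in $T^n$ with the standard structure, every cell $e^{n_i}_i$ is a product of some $1$-cells and $0$-cells, so the attaching map of $e^{n_i}_i$ maps into the $(n_i-1)$-skeleton of $T^n$ but, crucially, into a \emph{wedge-like} piece after suspension. More precisely, I would first recall (as in the surface group proposition above) that the attaching map $\varphi_i\colon S^{n_i-1}\to (T^n)^{(n_i-1)}$ of a product cell factors, up to the identifications in the skeleton, so that each of its components (after projecting to the various subtori) has the form of a generalized Whitehead/commutator-type map. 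The central classical fact I would invoke is that suspension kills such attaching maps: for the standard torus, $\Sigma(T^n)$ splits as a wedge of spheres (this is the classical James/Milnor splitting $\Sigma(X\times Y)\simeq \Sigma X\vee\Sigma Y\vee\Sigma(X\wedge Y)$ iterated, already used in Proposition~\ref{oper}), and the splitting is cellular and natural.

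First I would establish the statement for $K=T^n$ itself by induction on $n$, using the suspension splitting $\Sigma^\ell T^n\simeq \bigvee_{A\subset\{1,\dots,n\}} \Sigma^\ell (S^1)^{\wedge |A|} = \bigvee_{A} S^{|A|+\ell}$, and checking that the equivalence can be taken cellular and taking $\Sigma^\ell e_A$ (the product cell indexed by $A$) to the corresponding sphere $S^{|A|+\ell}$; here $\ell$ can in fact be taken to be $1$ for the torus, but I will not optimize. Then for a general subcomplex $K=e^0\cup(\cup_{i\in J}e^{n_i}_i)$, I would use that $K\hookrightarrow T^n$ is a cellular inclusion and that the suspension splitting of $T^n$ restricts: since the splitting wedge summand $S^{|A|+\ell}$ of $\Sigma^\ell T^n$ corresponds precisely to the cell $e_A$, and $K$ is a union of a sub-collection of these cells together with $e^0$, the retraction $\Sigma^\ell T^n\to \bigvee_{i\in J}S^{n_i+\ell}$ collapsing the summands not indexed by $J$ restricts on $\Sigma^\ell K$ to a map $f$ which is a homotopy equivalence by a homology (Whitehead) argument: $f$ induces an isomorphism on $\widetilde H_*$ because the cellular chain complex of $\Sigma^\ell K$ has trivial differential in the relevant range (all cells of $\Sigma^\ell K$ are suspensions $\Sigma^\ell e_A$ with distinct dimensions possibly, but even with repeats the differentials vanish after suspension), and both spaces are simply connected, so Whitehead applies. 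The "moreover" clause is then automatic from the construction: $f$ is cellular and sends $\Sigma^\ell e^{n_i}_i$ to $S^{n_i+\ell}$ by design.

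The main obstacle I expect is verifying carefully that the cellular chain differentials of $\Sigma^\ell K$ vanish — equivalently, that no attaching map of a cell of $K$ has nonzero degree onto a lower-dimensional cell after suspension. For $T^n$ this follows because the attaching maps are iterated commutators/Whitehead products, which are stably trivial; but one must check that passing to the subcomplex $K$ does not introduce a nonzero differential, i.e. that the boundary of a product cell $e_A$ in the cellular chain complex of $T^n$, when restricted to cells lying in $K$, is still zero. This is true because the full cellular boundary $\partial e_A$ in $C_*(T^n)$ is already zero (the torus has trivial cellular differentials), and restriction to a subcomplex only removes terms. So in fact the differentials vanish for trivial reasons, and the genuinely substantive input is just the classical suspension splitting of a product, applied inductively, together with naturality to get the cellular form. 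I would organize the write-up as: (i) suspension splitting of $T^n$, cellular and indexed by cells; (ii) restriction to the subcomplex and Whitehead's theorem; (iii) reading off the "moreover" statement.
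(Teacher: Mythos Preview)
Your approach is essentially the paper's: both rest on iterating the suspension splitting $\Sigma(X\times Y)\simeq\Sigma X\vee\Sigma Y\vee\Sigma(X\wedge Y)$, and the paper in fact supplies no further detail beyond citing this formula and referring to~\cite{Dr2}. Your homology/Whitehead argument for passing from $T^n$ to a subcomplex (using that the cellular differential of $T^n$ is already zero, so the same holds for $K$) is a correct way to fill in what the paper omits.

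One caveat on the ``Moreover'' clause. The specific map you build --- restrict the splitting $\Sigma^\ell T^n\to\bigvee_A S^{|A|+\ell}$ to $\Sigma^\ell K$ and then collapse the summands outside $J$ --- does \emph{not} literally send the cell $\Sigma^\ell e_A$ into the single sphere $S^{|A|+\ell}$. The $B$-component of the splitting map is the projection $T^n\to(S^1)^{\wedge B}$, and for $B\subsetneq A$ this is nonconstant on $e_A$ (it surjects $e_A$ onto the top cell of $S^{|B|}$). Hence the image of $\Sigma^\ell e_A$ under your $f$ meets every $S^{|B|+\ell}$ with $B\subset A$; all such $B$ lie in $J$ (they index faces of $e_A$, hence cells of the subcomplex $K$), so the final collapse does not remove them. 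So ``by design'' is too quick. To obtain the cell-to-sphere property literally, build the equivalence by induction on skeleta instead: once you know each suspended attaching map $\Sigma^\ell\phi_A$ is null-homotopic in $\Sigma^\ell K^{(|A|-1)}$ (which follows from the torus splitting, or directly from the fact that the attaching maps in $T^n$ are iterated Whitehead products and hence stably trivial), you may extend over each new cell by sending it to a fresh wedge summand $S^{n_i+\ell}$. This is a small repair; the main line of argument is sound and matches the paper's.
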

\begin{proof}
The proof is based on the formula $$\Sigma(X\times Y)=\Sigma X\vee \Sigma Y\vee \Sigma (X\wedge Y).$$ We refer to the proof of Proposition 6.5 in~\cite{Dr2} for details.
\end{proof}
\begin{cor}\label{raag}
Any subcomplex of $n$-dimensional torus $T^n$ satisfies $(^*)_s$.
\end{cor}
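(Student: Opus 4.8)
The plan is to read off the corollary immediately from the preceding proposition, once one records the combinatorial shape of the cells of a torus and of the Salvetti complex. First I would note that $T^n=(S^1)^n$ with $S^1=e^0\cup e^1$ has a single $0$-cell, and every cell of the product CW structure is a product $e^1_{i_1}\times\cdots\times e^1_{i_k}$ of distinct $1$-cells; hence any nonempty subcomplex $K\subset T^n$ contains the unique $0$-cell and is automatically of the form $K=e^0\cup(\cup_{i\in J}e^{n_i}_i)$ required in the previous proposition. Applying that proposition therefore yields, for $\ell$ large, a cellular homotopy equivalence $\Sigma^\ell K\simeq\bigvee_{i\in J}S^{n_i+\ell}$ carrying $\Sigma^\ell e^{n_i}_i$ to $S^{n_i+\ell}$. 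Equivalently, the suspension spectrum of $K$ is a wedge of sphere spectra, which is precisely property $(^*)_s$ for $K$ (interpreted, as is needed below, as ``$K$ has the stable homotopy type of a wedge of spheres'').

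Next I would specialize to the Salvetti complex $\mathcal S_X=BA_X$. By its very construction it is the subcomplex of $T^{|S|}=(S^1)_{s_1}\times\cdots\times(S^1)_{s_n}$ consisting of the basepoint $x_0$ together with, for each complete subgraph on a set $\{s_{i_1},\dots,s_{i_k}\}$ of mutually commuting generators, the product $k$-cell $e^1_{i_1}\times\cdots\times e^1_{i_k}$ (a $k$-torus glued in along the part of $T^{|S|}$ already present). Thus $\mathcal S_X$ is a subcomplex of a torus of exactly the type above, so $\Sigma^\ell\mathcal S_X\simeq\bigvee S^{n_i+\ell}$ for $\ell$ large; that is, $BA_X$ has the stable homotopy type of a wedge of spheres, i.e. $A_X$ satisfies $(^*)_s$.

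I do not expect a genuine obstacle in this final step: the whole weight of the argument is carried by the suspension-splitting proposition that precedes the corollary (itself resting on the iterated use of $\Sigma(X\times Y)=\Sigma X\vee\Sigma Y\vee\Sigma(X\wedge Y)$), and all that remains is the bookkeeping that any subcomplex of a product of circles --- in particular the Salvetti complex --- sits inside $T^n$ as a subcomplex of the stated combinatorial type. If one then wished to draw the consequence for the Gromov conjecture announced in the abstract, one would invoke Proposition~\ref{imp} to pass from $(^*)_s$ to the $1$-Step Stabilization property for $KO_*$ (right-angled Artin groups being geometrically finite), recall that RAAGs satisfy the Strong Novikov Conjecture, and apply Theorem~\ref{m}; but that is a separate deduction rather than part of the proof of the corollary itself.
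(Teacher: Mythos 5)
Your proof is correct and matches the paper's implicit argument: the corollary is a direct consequence of the preceding proposition once one observes, as you do, that every nonempty subcomplex of $T^n$ with the product CW structure automatically has the combinatorial form $e^0\cup(\cup_{i\in J}e^{n_i}_i)$ (the product $T^n$ has a single $0$-cell and every open cell is a product of $1$-cells $e^1_{i_1}\times\cdots\times e^1_{i_k}$), so the proposition applies and gives $\Sigma^\ell K\simeq\bigvee_{i\in J}S^{n_i+\ell}$, which is the content of $(^*)_s$ read as a property of the space $K$. Your second and third paragraphs (the Salvetti complex and the route to the Gromov conjecture) actually reprove the subsequent proposition and corollary of the paper rather than the corollary at hand, but as you note they are separate deductions and do not affect the correctness of the argument for this statement.
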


\begin{prop}
RAAGs satisfy $(^*)_s$.
\end{prop}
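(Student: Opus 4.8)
The plan is to realise the Salvetti complex $\mathcal{S}_X=BA_X$ as a subcomplex of a torus and then invoke Corollary~\ref{raag}. Fix an enumeration $s_1,\dots,s_n$ of the vertex set $S$ of $X$ and equip the $n$-torus $T^n=S^1\times\cdots\times S^1$ with the product CW structure in which each factor is $S^1=e^0\cup e^1$. Its cells are the products $e^{\eps_1}_1\times\cdots\times e^{\eps_n}_n$ with $\eps_i\in\{0,1\}$; such a cell has dimension $|\{i:\eps_i=1\}|$, and its closed faces are obtained by replacing some of the entries $\eps_i=1$ by $\eps_i=0$.

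Let $K_X\subseteq T^n$ be the subcomplex consisting of exactly those product cells whose support $\{i:\eps_i=1\}$ spans a complete subgraph of $X$. First I would check that $K_X$ is genuinely a subcomplex: the support of a face of a cell of $K_X$ is a subset of the support of that cell, and a subset of a clique is again a clique. Then I would identify $K_X$ with $\mathcal{S}_X$ cell by cell. In both complexes the $d$-cells are indexed by the $d$-element cliques of $X$: for $K_X$ this is the definition, while for $\mathcal{S}_X$ the $k$-torus attached in the construction for a clique $\{s_{i_1},\dots,s_{i_k}\}$ is precisely the sub-torus $\bigl(\prod_{j=1}^kS^1_{i_j}\bigr)\times\bigl(\prod_{i\notin\{i_1,\dots,i_k\}}\{e^0\}\bigr)$ of $T^n$, that is, the union of the cells of $K_X$ whose support is contained in $\{i_1,\dots,i_k\}$; the $k$-tori attached for different cliques overlap only along such common cells, so the construction contributes exactly one $d$-cell for each $d$-clique and no others. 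Hence $\mathcal{S}_X=K_X$ as CW complexes.

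Since $X$ is finite, $\mathcal{S}_X=K_X$ is a finite CW complex, so $A_X$ is geometrically finite; and $K_X$ is a subcomplex of $T^n$, so Corollary~\ref{raag} applies and shows that $BA_X=K_X$ has the stable homotopy type of a wedge of spheres, i.e.\ $A_X$ satisfies $(^*)_s$. I expect the only point needing care to be the cell-by-cell identification $\mathcal{S}_X=K_X$ --- tracking exactly which product cells of $T^n$ are produced by the inductive construction of the Salvetti complex --- but this is a standard description of Salvetti complexes and amounts to bookkeeping of cells; all the homotopy-theoretic content, namely the suspension splitting $\Sigma(A\times B)\simeq\Sigma A\vee\Sigma B\vee\Sigma(A\wedge B)$ and the consequent desuspension of a torus subcomplex into a wedge of spheres, has already been established in Corollary~\ref{raag} (and Proposition 6.5 of~\cite{Dr2}).
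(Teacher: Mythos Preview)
Your proof is correct and follows exactly the same strategy as the paper: identify the Salvetti complex $\mathcal{S}_X$ as a subcomplex of $T^n$ (via the cells indexed by cliques of $X$) and then invoke Corollary~\ref{raag}. You spell out the identification $\mathcal{S}_X=K_X$ in more detail than the paper does, but the argument is otherwise identical.
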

\begin{proof}
Let $A_X$ be right-angled Artin group generated by $n$ generators where $X$ is the corresponding simple graph with $n$ vertices $\{s_1,\cdots,s_n\}$. Since the Salvetti complex $\mathcal{S}_X$ is built as a union of $k$-tori, $T^k\subset T^n$ for each complete $k$-subgraph of $X$, clearly $\mathcal{S}_X$ is a subcomplex of $T^n$. Thus Lemma~\ref{raag}, completes the proof.
\end{proof}

In view of  Proposition~\ref{imp}, the condition $(^*)_s$ implies the 1-Step Stabilization property for $KO_*$. Thus, Theorem~\ref{m} implies the following
\begin{cor}
Suppose that a group $\Gamma$ satisfies $(^*)_s$ and the Strong Novikov conjecture holds true for $\Gamma$. Then the Strong Gromov conjecture holds for spin manifolds with the fundamental group $\Gamma$. 
\end{cor}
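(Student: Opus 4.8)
The plan is to assemble the results of the previous sections; no new ingredient is required, only a careful matching of hypotheses. Let $M$ be a closed spin positive scalar curvature $n$-manifold, $n>4$, with $\pi_1(M)=\Gamma$ (the standing dimension restriction $n>4$ of the paper is in force). The goal is to show that $M$ is strongly inessential, since then $\dim_{mc}\Wi M\le n-2$, which is the Strong Gromov Conjecture for $M$.

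First I would extract the $K$-theoretic input. Since $\Gamma$ satisfies $(^*)_s$, Proposition~\ref{imp} shows that $\Gamma$ has the 1-Step Stabilization property for \emph{every} generalized homology theory; in particular it has this property for $KO_*$ for all $m$ with $k=m$ and with $k=m+1$, which are precisely the two index ranges needed below.

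Next I would run the obstruction-theoretic machinery in two steps. Step one: by the Rosenberg index theorem (Theorem~\ref{Ros}), a spin positive scalar curvature manifold whose fundamental group satisfies the Strong Novikov Conjecture is $KO$-inessential, so $M$ is $KO$-inessential; Proposition~\ref{KO-iness} (whose hypothesis is the 1-Step Stabilization property for $KO_*$ with $k=m$) then yields that $M$ is inessential, i.e.\ $(u_M)_*[M]=0$ in $H_n(B\Gamma)$. Step two: with $M$ now known to be both inessential and $KO$-inessential, Proposition~\ref{strong iness} (whose hypothesis is the 1-Step Stabilization property for $KO_*$ with $k=m+1$) upgrades this to strong inessentiality, i.e.\ the classifying map $u_M$ deforms into the $(n-2)$-skeleton $B\Gamma^{(n-2)}$. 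Alternatively, one may cite Theorem~\ref{m} applied with the finite-index subgroup taken to be $\Gamma$ itself: its proof proceeds exactly through these two propositions and so in fact establishes that $M$, not merely a finite cover, is strongly inessential.

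I do not expect a genuine obstacle here; the proof is bookkeeping. The two points deserving attention are (i) confirming that Proposition~\ref{imp} supplies the 1-Step Stabilization property for $KO_*$ in both ranges $k=m$ and $k=m+1$ required by Propositions~\ref{KO-iness} and~\ref{strong iness} --- which it does, since it gives the property for all $h_*$ and all $k$ --- and (ii) the dimension hypothesis $n>4$, which enters through Lemma~\ref{2-obstruction} inside the proof of Proposition~\ref{strong iness} and hence cannot be dropped by this argument.
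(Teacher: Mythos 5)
Your proof is correct and follows exactly the paper's intended route: it combines Proposition~\ref{imp} (to convert $(^*)_s$ into the 1-Step Stabilization property for $KO_*$ with $k\in\{m,m+1\}$), Theorem~\ref{Ros}, Proposition~\ref{KO-iness}, and Proposition~\ref{strong iness}, which is precisely the chain of implications assembled in the proof of Theorem~\ref{m} specialized to $\Gamma'=\Gamma$. You are also right to flag that the statement of Theorem~\ref{m} only yields the original Gromov Conjecture, so one must invoke its \emph{proof} (or the two propositions directly) to obtain strong inessentiality of $M$ itself, and that the restriction $n>4$ enters through Lemma~\ref{2-obstruction}.
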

\begin{cor}
The Strong Gromov Conjecture holds true for spin manifolds with the fundamental group $\Gamma$ where
\begin{itemize}
\item $\Gamma$ is a RAAG,
\item $\Gamma$ is a finite product of orientable surface groups.
\end{itemize}
\end{cor}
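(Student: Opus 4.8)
The plan is to derive this corollary from the preceding one, which guarantees the Strong Gromov Conjecture for spin $n$-manifolds ($n>4$, as in \theoref{m}) whose fundamental group $\Gamma$ satisfies both $(^*)_s$ and the Strong Novikov Conjecture. So for each of the two families of groups it suffices to verify these two hypotheses.

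The condition $(^*)_s$ has in effect already been checked above. For a RAAG $A_X$ the Salvetti complex $\mathcal{S}_X = BA_X$ is a subcomplex of a torus $T^n$, so $(^*)_s$ holds by \corref{raag}. For a finite product $\Gamma = \Gamma_1\times\cdots\times\Gamma_k$ of orientable surface groups, each factor satisfies $(^*)_s$ (indeed $\Sigma M_g$ is a wedge of spheres), and then $\Gamma$ satisfies $(^*)_s$ by applying \propref{oper} inductively. Both families consist of geometrically finite groups, so by \propref{imp} the condition $(^*)_s$ supplies the 1-Step Stabilization property for $KO_*$ in the range $k\in\{m,m+1\}$ needed for \theoref{m}.

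It remains to record that these groups satisfy the Strong Novikov Conjecture, which is known; the only real work in the proof is choosing which formulation to cite. The cleanest route is via linearity: a RAAG embeds into a right-angled Coxeter group, which is linear over $\Q$ by the Tits representation; an orientable surface group is Fuchsian, hence a subgroup of $PSL_2(\R)$; and a finite direct product of linear groups is linear. By Guentner-Higson-Weinberger every countable linear group admits a uniform embedding into Hilbert space, and such groups satisfy the Strong Novikov Conjecture by Yu and Skandalis-Tu-Yu. (Alternatively, RAAGs act properly on finite-dimensional CAT(0) cube complexes and surface groups act properly on the Euclidean or hyperbolic plane, so all the groups in question have the Haagerup property, which is preserved under direct products; the Baum-Connes conjecture, a fortiori the Strong Novikov Conjecture, then follows from Higson-Kasparov.) Granting these inputs, the preceding corollary applies verbatim. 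There is no new geometric content here; the one mild point to watch is that the Strong Novikov Conjecture must survive the finite direct products appearing in the second family, which is exactly why the linearity (or Haagerup) formulation is convenient, and one should not forget the standing hypothesis $n>4$ inherited from \theoref{m}.
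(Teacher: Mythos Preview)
Your proof is correct and follows the same approach as the paper: apply the preceding corollary by noting that $(^*)_s$ has already been established for both families and then verifying the Strong Novikov Conjecture. The paper's one-line justification for the latter is simply that these groups are CAT(0); your linearity and Haagerup arguments are equally valid alternatives, and your reminder about the standing hypothesis $n>4$ is apt.
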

\begin{proof}
Since these groups are CAT(0) groups, the Strong Novikov Conjecture holds for them and the result follows.
\end{proof}

In view of Proposition~\ref{m}, if the Strong Gromov Conjecture holds for a group $\Gamma'$ contained in $\Gamma$ as a finite index subgroup, then the original Gromov's 
conjecture holds for $\Gamma$.
Since the fundamental group of non-orientable surface contains as index 2, the fundamental group of an orientable surface, we obtain that
the original Gromov Conjecture holds true for products of non-orientable surface groups.
  \begin{remark}
  It is well known that each RAAG is contained in some right-angled Coxeter groups as finite index subgroup,~\cite{DJ}, hence the Gromov Conjecture holds true for spin manifolds having fundamental groups those right-angled Coxeter groups. 
\end{remark}
\begin{question}\label{Q2}
Does the property $(^*)_s$ implies the  (Strong) Novikov conjecture for a group $\Gamma$?
\end{question}

\subsection{More groups satisfying 1-Step Stabilization Property}

Given an abelian group $G$ and $n\ge 1$, a CW complex $X$ having trivial reduced homology $\tilde H_i(X)=0$ for $i\ne n$ and $H_n(X)=G$ is called a Moore space and is denoted as $M(G,n)$. In the case of a finitely generated group $G$ and $n\ge 2$  the Moore space can be chosen to be the finite wedge of $n$-sphere $S^n$ and $n$-spheres with one $(n+1)$-dimensional cell attached $S^n\cup_\phi
D^{n+1}$.

The condition $(^*)_s$ can be weakened to the following

$(^*)_m$\ \ \ {\em The  classifying space $B\Gamma$ has the stable homotopy type of a wedge of Moore spaces.}

\begin{prop}
For geometrically finite groups property  $(^*)_m$ implies the 1-Step Stabilization property for  any homology theory $h_*$.
\end{prop}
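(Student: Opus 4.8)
The plan is to adapt the argument already used for the property $(^*)_s$ case (Proposition~\ref{imp}), replacing ``wedge of spheres'' by ``wedge of Moore spaces'' throughout. The only genuinely new input needed is that the inclusion of a sub-wedge of Moore spaces into a larger wedge of Moore spaces induces a \emph{monomorphism} on $h_*$ for any homology theory $h_*$; once that is in hand, the diagram-chase is formally identical.

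First I would set up the stable homotopy equivalence: since $\Gamma$ is geometrically finite, $(^*)_m$ gives $\ell$ and a homotopy equivalence $f:\Sigma^\ell B\Gamma\to \bigvee_{j} \Sigma^\ell M(G_j,n_j)$ with homotopy inverse $g$, and by the cellular approximation theorem both may be taken cellular with respect to the natural CW structures (each $\Sigma^\ell M(G_j,n_j)$ being a wedge of spheres $S^{n_j+\ell}$ and cells $S^{n_j+\ell}\cup D^{n_j+\ell+1}$). As in Proposition~\ref{imp}, the restrictions $f|$ and $g|$ to the $(m+\ell)$-skeleta satisfy that $\Sigma^\ell\phi_{m,m+1}$ is homotopic to $\Sigma^\ell\phi_{m,m+1}(g|)(f|)$, so for any $a\in h_{k+\ell}(\Sigma^\ell B\Gamma^{(m)})$ with $(\Sigma^\ell\phi_{m,m+1})_*(a)\neq 0$ we get $(f|)_*(a)\neq 0$ in $h_{k+\ell}$ of the sub-wedge $\bigvee_{n_j\le m}\Sigma^\ell M(G_j,n_j)$. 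The commutative square analogous to the one in Proposition~\ref{imp} then reduces everything to showing the inclusion $\xi:\bigvee_{n_j\le m}\Sigma^\ell M(G_j,n_j)\hookrightarrow\bigvee_j \Sigma^\ell M(G_j,n_j)$ is injective on $h_{k+\ell}$.

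For that step, note the inclusion of a sub-wedge into a wedge is a split injection of spaces: the collapse map retracting the complementary summands to the basepoint is a left inverse. Hence $\xi_*$ is a split monomorphism on $h_*$ for \emph{any} homology theory, with no finiteness or connectivity hypothesis required — this is exactly where the Moore-space structure is irrelevant and the argument goes through verbatim. (One should observe that the wedge over all $j$ is a filtered colimit of finite sub-wedges, so $h_*$ of it is the colimit, and each finite-stage inclusion is split injective; alternatively just note $\xi^1_*\xi^0_*$ is injective as in the original proof.) Combining, $(\phi_{m+1,\infty})_*(\phi_{m,m+1})_*(a)\neq 0$, which is precisely the $1$-Step Stabilization property.

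The main obstacle is really only bookkeeping: ensuring the cellular models of $\Sigma^\ell M(G_j,n_j)$ are compatible with the skeletal filtration so that ``$n_j\le m$'' cleanly picks out a subcomplex, and handling the possible $(n_j+\ell+1)$-cells (which live one dimension above $n_j+\ell$ but still only in finitely many summands at each skeletal stage for a geometrically finite $\Gamma$). None of this is serious; the proof is a routine transcription of Proposition~\ref{imp} once one records that sub-wedge inclusions are split injections. I would therefore simply write: ``The proof is identical to that of Proposition~\ref{imp}, using that the inclusion of a sub-wedge of Moore spaces into a larger wedge is a split monomorphism and hence induces a monomorphism on $h_*$.''
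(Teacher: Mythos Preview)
Your overall strategy matches the paper's, but there is a real gap where you claim the argument is ``only bookkeeping.'' You identify the $(m+\ell)$-skeleton of $\bigvee_j \Sigma^\ell M(G_j,n_j)$ with the sub-wedge $\bigvee_{n_j\le m}\Sigma^\ell M(G_j,n_j)$, and then invoke that sub-wedge inclusions are split injective. This identification is false: when $G_j$ is finite cyclic, $\Sigma^\ell M(G_j,n_j)=M(G_j,n_j+\ell)$ has a top cell in dimension $n_j+\ell+1$, so for $n_j=m$ the $(m+\ell)$-skeleton contains only the bottom sphere $S^{m+\ell}$, not the whole Moore space. The resulting inclusion $S^{m+\ell}\hookrightarrow M(G_j,m+\ell)$ is \emph{not} split (already on ordinary homology it is the surjection $\Z\to\Z_{p^k}$), so the composite from the $(m+\ell)$-skeleton into the full wedge is not a sub-wedge inclusion and need not be injective on $h_*$. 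Your element $(f|)_*(a)$ could in principle be supported on such a bottom sphere and die.

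The paper repairs exactly this point, and it is the only substantive difference from the $(^*)_s$ case. One pushes first to the $(m+\ell+1)$-skeleton via $\xi^0$: since $g^{m+1}_*\xi^0_*f^m_*(a)=\phi^0_*(a)\ne 0$, one has $\xi^0_*f^m_*(a)\ne 0$. Now the $(m+\ell+1)$-skeleton decomposes as $\bigvee_{n_j\le m}M(G_j,n_j+\ell)\vee\bigvee S^{m+\ell+1}$, and because $\xi^0$ factors through the first summand, the element $\xi^0_*f^m_*(a)$ lives entirely there. \emph{That} summand is a genuine sub-wedge of the full wedge, hence a retract, and so $\xi^1_*$ preserves its nonvanishing. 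In other words, the fix is not to show the skeletal inclusion is injective (it is not), but to show this particular element survives by first landing it in the completed-Moore-space summand at the next skeletal level. Once you add this step, your proof coincides with the paper's.
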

\begin{proof}
Let $f:\Sigma^\ell B\Gamma\to\Sigma^\ell(\vee M(G_j,n_j))$ be a homotopy equivalence with a homotopy inverse map $g$. 
Since $B\Gamma$ is geometrically finite, the groups $G_j$ are finitely generated. We may assume that each $G_j$ is isomorphic to $\Z$ or to $\Z_{m_j}$ with $m_j=p^{k_j}$ for 
a prime number $p$. Thus, $M(G_j,n_j)$ is either an $n_j$-sphere $S^{n_j}$ or an $n_j$-sphere  with $(n_j+1)$-cell attached, $S^{n_j}\cup_{\psi_j}D^{n_j+1}$.
We may assume that $f$ and $g$ are cellular map
with respect to the natural CW structure on $\Sigma^\ell M(G_j,n_j) =M(G_j,n_j+\ell)$. 
Let $m'=m+\ell$, $k'=k+\ell$, and $n_j'=n_j+\ell$. Let $f^m$ and $g^m$ denote the
restrictions of $f$ and $g$ to the $(m+\ell)$-skeletons. Then
for any nontrivial element $a\in h_{k+\ell}(\Sigma^\ell B\Gamma^{(m)})$ with $(\Sigma^\ell\phi_{m,m+1})_*(a)\ne 0$ we have $(f|)_*(a)\ne 0$:

Let $\phi^0=\Sigma^\ell\phi_{m,m+1}$ and $\phi^1=\Sigma^\ell\phi_{m+1,\infty}$.
Consider the diagram
$$
\begin{CD}
h_{k'}(\Sigma^\ell B\Gamma^{(m)}) @>\phi^0_*>> h_{k'}(\Sigma^\ell B\Gamma^{(m+1)}) @>\phi^1_*>> h_{k'}(\Sigma^\ell B\Gamma)\\
@Ag^m_*AA @Vf^{m+1}_*VV @Ag_*A\cong A\\
h_{k'}(\bigvee M(G_j,n_j')^{(m')}) @>\xi^0_*>> h_{k'}(\bigvee M(G_j,n_j')^{(m'+1)}) @>\xi^1_*>> h_{k'}(\bigvee M(G_j,n_j')).\\
\end{CD}
$$
where $\xi^0$ and $\xi^1$ are the inclusions of wedge sums. 
Let $a\in h_{k'}(\Sigma^\ell B\Gamma^{(m)})$ with $\phi^0_*(a)\ne 0$. We need to show that $\phi^1_*\phi^0_*(a)\ne 0$.

Since the maps $\phi^0$ and $\phi^0g^mf^m$ 
are homotopic, we obtain $$g^{m+1}_*\xi^0(f^m_*(a))=\phi^0_*g^m_*(f^m_*(a))=\phi^0_*(a)\ne 0.$$
Therefore, $\xi^0_*(f^m_*(a))\ne 0$. We note that $$\bigvee M(G_j,n_j')^{(m'+1)}=\bigvee_{n_j\le m'}M(G_j,n_j')\vee(\bigvee S^{m'+1})$$
and $\xi^0_*(f^m_*(a))$ lives in the summand $\bigvee_{n_j\le m'}M(G_j,n_j')$ which is a retract of $\bigvee M(G_j,n_j')$.
Hence, $\xi^1_*(\xi^0(f^m_*(a))\ne 0$. Then
$$
\phi^1_*\phi^0_*(a)=\phi^1_*g^{m+1}_*\xi^0_*f^m_*(a)=g_*\xi^1_*\xi^0_*f^m_*(a)\ne 0.
$$

Thus, the 1-Step Stabilization property is verified.
\end{proof}

\begin{prop}
All finitely generated groups with $\dim B\Gamma\le 2$ have property $(^*)_m$ with a finite wedge of Moore spaces. 
\end{prop}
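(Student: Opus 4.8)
The plan is to suspend a minimal $2$-dimensional model of $B\Gamma$ once and then recognize the result, via the classification of low-dimensional complexes, as a finite wedge of Moore spaces. First I would fix the model: since $\Gamma$ is finitely generated and $\dim B\Gamma\le 2$, we may take $B\Gamma$ to be a connected $2$-complex $Y$ with a single $0$-cell, so that $Y^{(1)}=\bigvee_{i=1}^{k}S^{1}$ is a finite wedge of circles (one per generator) and $Y$ is obtained from $Y^{(1)}$ by attaching $2$-cells along relator maps $\varphi_{j}\colon S^{1}\to Y^{(1)}$. In the finitely presented case — the one in which $(^*)_m$ is applied in this paper — there are finitely many such $2$-cells and $Y$ may be taken finite.

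Next I would pass to the reduced suspension $\Sigma Y$. As $Y$ is connected, $\Sigma Y$ is simply connected, and $\dim\Sigma Y\le 3$, so $\Sigma Y$ is a $1$-connected CW complex of dimension at most $3$. Proposition~\ref{moore}, applied with $n=2$, then yields a homotopy equivalence
$$\Sigma Y\ \simeq\ \Bigl(\bigvee S^{2}\Bigr)\vee\Bigl(\bigvee S^{3}\Bigr)\vee\Bigl(\bigvee_{t}M(\Z_{m_{t}},2)\Bigr).$$
When $Y$ is finite so is $\Sigma Y$, and a finite CW complex has finitely generated homology, so this wedge is finite. Since $S^{2}=M(\Z,2)$ and $S^{3}=M(\Z,3)$, the right-hand side is a finite wedge of Moore spaces, and the existence of such an equivalence after a single suspension is precisely the statement that $\Gamma$ has property $(^*)_m$.

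For concreteness one can also see the splitting by hand: because $\bigvee_{i=1}^{k}S^{2}$ is simply connected, $\pi_{2}(\bigvee_{i=1}^{k}S^{2})\cong H_{2}(\bigvee_{i=1}^{k}S^{2})\cong\Z^{k}$, and under this identification the class of $\Sigma\varphi_{j}$ is the abelianization of the $j$-th relator word; hence $\Sigma Y$ is the complex obtained from $\bigvee_{i=1}^{k}S^{2}$ by attaching $3$-cells along the rows of the integer relation matrix of the presentation. Such a complex is unchanged up to homotopy by unimodular row and column operations on that matrix — column operations are induced by self-homotopy-equivalences of $\bigvee_{i=1}^{k}S^{2}$, and adding one row to another is legitimate because a relator class becomes null-homotopic once its own $3$-cell has been attached — so Smith normal form splits $\Sigma Y$ as a wedge of copies of $S^{2}$, copies of $S^{3}$, contractible summands coming from diagonal entries $\pm1$, and Moore spaces $M(\Z_{|d|},2)$ coming from diagonal entries $d$ with $|d|\ge 2$. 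The one point that genuinely needs care — in either approach — is the finiteness of the wedge: this amounts to $H_{1}(\Gamma)$ and $H_{2}(\Gamma)$ being finitely generated, which is automatic once $B\Gamma$ is a finite complex, the standing hypothesis wherever $(^*)_m$ is used. Everything else is the ``attaching maps into a simply connected target are detected by homology'' argument of 4C of \cite{Ha}, already packaged in Proposition~\ref{moore}.
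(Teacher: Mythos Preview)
Your proof is correct and follows exactly the paper's approach: the paper's entire proof is ``Apply Proposition~\ref{moore},'' which implicitly means suspend $B\Gamma$ once to get a $1$-connected $3$-complex and invoke that classification. Your additional Smith normal form computation and your remark about finiteness (which the paper does not address) are extra detail, but the core argument is the same.
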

\begin{proof}
Apply Proposition~\ref{moore}.
\end{proof}
Suppose that a geometrically finite group $\Gamma$  has property $(^*)_m$. Then for some $\ell$ the $\ell$-suspension $\Sigma^\ell B\Gamma$ is homotopy equivalent to
a wedge $\vee_{j=1}^s M(G_j, n_j)$ where $G_j=\Z$ or $\Z_{p^{k_j}}$ where $p$ is a prime number and $n_j>\ell$. We call such $\Gamma$ {\em 2-avoiding} if $p^{k_j}\ne 2$ for all $j$.

\begin{prop}\label{msmash}(~\cite{JN} Corollary 6.6)
If  $gcd(k,l)=d$ is odd or $4$ divides $k$, then $M(\Z_k,m)\wedge M(\Z_l,n)$ is homotopy equivalent to $M(\Z_d,m+n)\vee M(\Z_d,m+n+1)$.
\end{prop}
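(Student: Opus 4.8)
The plan is to pass to stable homotopy theory and reduce the statement to a computation of self-maps of Moore spectra. Writing $\mathbb S/k:=\mathrm{cofiber}(\mathbb S\xrightarrow{\,k\,}\mathbb S)$ for the mod-$k$ Moore spectrum, the suspension spectrum of $M(\Z_k,m)=S^m\cup_k e^{m+1}$ is $\Sigma^m\mathbb S/k$, so $\Sigma^\infty\bigl(M(\Z_k,m)\wedge M(\Z_l,n)\bigr)\simeq\Sigma^{m+n}\bigl(\mathbb S/k\wedge\mathbb S/l\bigr)$ and $\Sigma^\infty\bigl(M(\Z_d,m+n)\vee M(\Z_d,m+n+1)\bigr)\simeq\Sigma^{m+n}\bigl(\mathbb S/d\vee\Sigma\mathbb S/d\bigr)$. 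In the situations where this paper applies the proposition the Moore spaces occur as suspensions of arbitrarily high connectivity, so the stable equivalence will imply the unstable one; hence it suffices to prove $\mathbb S/k\wedge\mathbb S/l\simeq\mathbb S/d\vee\Sigma\mathbb S/d$ with $d=\gcd(k,l)$. A Künneth computation shows that both sides have reduced homology $\Z_d$ in two consecutive degrees and $0$ elsewhere, so the only issue is the homotopy type.

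First I would reduce to prime powers. For coprime $k_1,k_2$ the integer $k_2$ acts on $\mathbb S/k_1$ by an equivalence (it is an isomorphism on $H_*$), so $\mathbb S/k_1\wedge\mathbb S/k_2\simeq\ast$; plugging this and Spanier--Whitehead duality into the octahedral cofiber sequence $\mathbb S/k_1\to\mathbb S/(k_1k_2)\to\mathbb S/k_2\to\Sigma\mathbb S/k_1$ coming from $\mathbb S\xrightarrow{k_1}\mathbb S\xrightarrow{k_2}\mathbb S$ forces its connecting map to vanish, so $\mathbb S/(k_1k_2)\simeq\mathbb S/k_1\vee\mathbb S/k_2$ and, inductively, $\mathbb S/k\simeq\bigvee_{p\mid k}\mathbb S/p^{v_p(k)}$, where $v_p$ is the $p$-adic valuation. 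Smashing and discarding the contractible mixed terms $\mathbb S/p^a\wedge\mathbb S/q^b$ ($p\ne q$) gives $\mathbb S/k\wedge\mathbb S/l\simeq\bigvee_p\bigl(\mathbb S/p^{a_p}\wedge\mathbb S/p^{b_p}\bigr)$ with $a_p=v_p(k)$, $b_p=v_p(l)$; since $v_p(d)=\min(a_p,b_p)$, it remains to show $\mathbb S/p^a\wedge\mathbb S/p^b\simeq\mathbb S/p^c\vee\Sigma\mathbb S/p^c$, $c=\min(a,b)$.

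For the core step I would fix $p$, take $a\le b$, and smash $\mathbb S\xrightarrow{p^b}\mathbb S\to\mathbb S/p^b$ with $\mathbb S/p^a$ to identify $\mathbb S/p^a\wedge\mathbb S/p^b$ with the mapping cone of the self-map $p^b\cdot\id_{\mathbb S/p^a}$; this cone is $\mathbb S/p^a\vee\Sigma\mathbb S/p^a$ as soon as $p^b\cdot\id_{\mathbb S/p^a}=0$ (and is $\ast$ trivially when $a=0$). So the task is to show that the order of $\id_{\mathbb S/p^a}$ in $[\mathbb S/p^a,\mathbb S/p^a]$ divides $p^b$. That order equals $p^a$ in every case except $p=2,\ a=1$: for $p$ odd because $\pi_1(\mathbb S)_{(p)}=0$, for $p=2$ and $a\ge2$ because $\mathbb S/2^a$ then admits a homotopy-unital multiplication $\mu$ (whence $2^a\cdot\id_{\mathbb S/2^a}=\mu\circ\bigl((2^a\iota)\wedge\id\bigr)=0$, since $2^a$ annihilates the unit $\iota\colon\mathbb S\to\mathbb S/2^a$), whereas for $p=2,\ a=1$ one has the classical identity $2\cdot\id_{\mathbb S/2}=\iota\circ\eta\circ j\ne0$ (with $\eta$ the stable Hopf map), so the order jumps to $4$. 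Hence $p^b\cdot\id_{\mathbb S/p^a}=0$ whenever $p$ is odd, or $a\ge2$, or ($a=1$ and $b\ge2$). The hypothesis ``$d$ odd or $4\mid k$'' (and, by symmetry of $\wedge$, equally ``$4\mid l$'') is exactly what enforces this at $p=2$: ``$d$ odd'' forces $\min(a,b)=0$ there (a contractible mixed term), while ``$4\mid k$ or $4\mid l$'' forces $\max(a,b)=b\ge2$, covering the remaining case $a=1$; the one excluded possibility $a=b=1$, $k=l=2$ is the classical non-splitting $\rpp\wedge\rpp$. Reassembling, $\mathbb S/k\wedge\mathbb S/l\simeq\bigl(\bigvee_p\mathbb S/p^{v_p(d)}\bigr)\vee\Sigma\bigl(\bigvee_p\mathbb S/p^{v_p(d)}\bigr)\simeq\mathbb S/d\vee\Sigma\mathbb S/d$.

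The hard part will be the prime-$2$ analysis: pinning down the order of $\id_{\mathbb S/2^a}$, equivalently the fact that $\mathbb S/2^a$ is homotopy-unital exactly for $a\ne1$, with the stable Hopf map $\eta$ responsible for the failure at $a=1$. This is the only place where genuine stable-homotopy input beyond homology and formal cofiber-sequence manipulations enters, and it is precisely the phenomenon that makes the hypothesis unavoidable. A minor point to handle separately is the passage from the spectrum-level equivalence to the statement about spaces, but this is routine in the high-connectivity range in which the proposition is used here.
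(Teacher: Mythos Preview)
The paper does not prove this proposition; it is quoted verbatim as Corollary~6.6 of Neisendorfer's memoir~\cite{JN} and used as a black box, so there is no argument in the paper to compare against. Your stable outline is correct and is essentially the standard proof. Two remarks are worth making. First, the stable-to-unstable passage is not special to ``the situations where this paper applies the proposition'': for $m,n\ge 2$ both $M(\Z_k,m)\wedge M(\Z_l,n)$ and $M(\Z_d,m{+}n)\vee M(\Z_d,m{+}n{+}1)$ are $(m{+}n{-}1)$-connected CW complexes of dimension $m{+}n{+}2$, hence already in the Freudenthal stable range once $m+n\ge 4$, and the unstable statement follows from your spectrum-level equivalence as written. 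Second, your justification that $2^a\cdot\id_{\mathbb S/2^a}=0$ for $a\ge 2$ via the existence of a homotopy-unital multiplication is not an independent argument: the obstruction to extending $\id_{\mathbb S/n}$ along $\iota\wedge\id\colon\mathbb S/n\to\mathbb S/n\wedge\mathbb S/n$ is precisely $n\cdot\id_{\mathbb S/n}$, so the two assertions are equivalent and you are citing the same fact in another guise. You are right to flag this as the genuine input. A self-contained verification: factor $2\cdot\id_{\mathbb S/2^a}=i\circ r$ with $r\colon\mathbb S/2^a\to\mathbb S/2^{a-1}$ the reduction and $i\colon\mathbb S/2^{a-1}\to\mathbb S/2^a$ the inclusion (so $ri=2\cdot\id_{\mathbb S/2^{a-1}}$); for $a=2$ one gets $4\cdot\id_{\mathbb S/4}=i(ri)r=i(\iota_2\eta j_2)r$, and since $i\iota_2=2\iota_4$ and $2\eta=0$ this vanishes; the case $a\ge 3$ then follows by induction from $2^a\cdot\id_{\mathbb S/2^a}=i\bigl(2^{a-1}\cdot\id_{\mathbb S/2^{a-1}}\bigr)r$.
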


\begin{prop}
The product of geometrically finite groups $\Gamma_1\times\Gamma_2$ has property $(^*)_m$, when each of  $\Gamma_1$, $\Gamma_2$ has property $(^*)_m$, provided one of the groups is 2-avoiding.
\end{prop}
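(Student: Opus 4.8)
The plan is to follow the pattern of the proof of Proposition~\ref{oper}, replacing the bookkeeping with spheres by an analysis of smash products of Moore spaces, where Proposition~\ref{msmash} does the essential work. Without loss of generality let $\Gamma_1$ be the $2$-avoiding factor, and suspend enough that $\Sigma^{\ell_1}B\Gamma_1\simeq\bigvee_{j}M(G_j,n_j)$ and $\Sigma^{\ell_2}B\Gamma_2\simeq\bigvee_{i}M(H_i,m_i)$, where each $G_j$ is $\Z$ or $\Z_{p_j^{a_j}}$ with $p_j^{a_j}\ne 2$ (this is exactly where $2$-avoidance is recorded), each $H_i$ is $\Z$ or $\Z_{q_i^{b_i}}$, both wedges are finite by geometric finiteness, and $\ell_1,\ell_2\ge 1$.

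First I would rewrite, exactly as in the proof of Proposition~\ref{oper}, using $B(\Gamma_1\times\Gamma_2)=B\Gamma_1\times B\Gamma_2$ and the iterated formula $\Sigma^{k}(X\times Y)=\Sigma^{k}X\vee\Sigma^{k}Y\vee\Sigma^{k}(X\wedge Y)$ with $k=\ell_1+\ell_2$:
\[
\Sigma^{\ell_1+\ell_2}B(\Gamma_1\times\Gamma_2)\simeq\Sigma^{\ell_1+\ell_2}B\Gamma_1\ \vee\ \Sigma^{\ell_1+\ell_2}B\Gamma_2\ \vee\ (\Sigma^{\ell_1}B\Gamma_1)\wedge(\Sigma^{\ell_2}B\Gamma_2).
\]
Since suspension commutes with wedge and $\Sigma M(G,n)=M(G,n+1)$, the first two summands are again finite wedges of Moore spaces of the allowed type, with all cells in dimensions $>\ell_1+\ell_2$. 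For the third summand one has $(\Sigma^{\ell_1}B\Gamma_1)\wedge(\Sigma^{\ell_2}B\Gamma_2)=\Sigma^{\ell_1+\ell_2}(B\Gamma_1\wedge B\Gamma_2)$, and distributivity of the smash product over finite wedges gives
\[
(\Sigma^{\ell_1}B\Gamma_1)\wedge(\Sigma^{\ell_2}B\Gamma_2)\simeq\bigvee_{i,j}M(G_j,n_j)\wedge M(H_i,m_i),
\]
so it remains only to show that each smash factor $M(G_j,n_j)\wedge M(H_i,m_i)$ is a wedge of Moore spaces of the allowed type.

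Next I would run the case analysis on a single factor. If $G_j=\Z$ or $H_i=\Z$, the smash is an iterated reduced suspension of one Moore space, hence again a Moore space, since $S^n\wedge M(G,m)=M(G,m+n)$. If both are torsion, use that smash is homotopy-commutative to place the $\Gamma_1$-factor first and write it as $M(\Z_k,n_j)\wedge M(\Z_l,m_i)$ with $k=p_j^{a_j}\ne 2$. Because $\Gamma_1$ is $2$-avoiding, either $p_j$ is odd, in which case $d=\gcd(k,l)$ divides the odd number $k$ and is therefore odd, or $p_j=2$ with $a_j\ge 2$, in which case $4\mid 2^{a_j}=k$. Either way the hypothesis of Proposition~\ref{msmash} is met, so
\[
M(\Z_k,n_j)\wedge M(\Z_l,m_i)\simeq M(\Z_d,n_j+m_i)\vee M(\Z_d,n_j+m_i+1),
\]
with $d$ a prime power dividing $k$, and with $M(\Z_1,\cdot)$ read as a point when $d=1$ (i.e.\ when $k$ and $l$ are coprime). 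Hence $\Sigma^{\ell_1+\ell_2}B(\Gamma_1\times\Gamma_2)$ is a finite wedge of Moore spaces $M(G,n)$ with $G=\Z$ or $\Z_{p^a}$ and $n>\ell_1+\ell_2$, which verifies $(^*)_m$ for $\Gamma_1\times\Gamma_2$.

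The only genuine obstacle is the torsion--torsion smash, and it is precisely there that the $2$-avoiding hypothesis is indispensable: $M(\Z_2,m)\wedge M(\Z_2,n)$ carries a nontrivial $\mathrm{Sq}^2$ linking its two cells and so is not a wedge of Moore spaces, so the hypothesis cannot simply be dropped. Granting Proposition~\ref{msmash}, the rest is the routine organization above; the only care needed is to keep all wedges finite via geometric finiteness and to track dimensions so that the output Moore spaces have the required connectivity.
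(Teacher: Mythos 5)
Your proof is correct and follows essentially the same route as the paper's: reduce to smashes of Moore spaces via the suspension-of-products formula and distributivity of smash over wedges, then apply Proposition~\ref{msmash} to each summand, with the $2$-avoiding hypothesis ensuring that in the only delicate case (both factors $2$-primary) one of the hypotheses of that proposition holds. The only cosmetic difference is that you always place the $2$-avoiding factor first in the smash while the paper orders by the larger exponent, but both conventions land in the ``$4\mid k$'' case of Proposition~\ref{msmash}; your closing remark that a nontrivial $\mathrm{Sq}^2$ in $M(\Z_2,m)\wedge M(\Z_2,n)$ obstructs a Moore-space splitting is a correct and useful gloss on why the hypothesis cannot be dropped.
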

\begin{proof}
Proceeding exactly similar to the proof of Proposition~\ref{oper}, this time $(\Sigma^{\ell_1} B\Gamma_1)\wedge (\Sigma^{\ell_2} B\Gamma_2)$ is homotopy equivalent to $$\left( \vee_{j=1}^s M(G_j, n_j)\right)\bigwedge \left (\vee_{i=1}^t M(G_i, m_i)\right)=\bigvee_{i,j} M(G_j, n_j)\wedge M(G_i, m_i).$$

Since $M(\Z, n)=S^n$, if any of $G_j$ or $G_i$ is $\Z$ we have $M(G_j, n_j)\wedge M(G_i, m_i)$ to be a suspension of one of the Moore spaces, which is again Moore space. It is enough to consider when we have same prime $p$ for both groups $G_j$ and $G_i$, since for $k=rs$ with $r,s$ relatively prime we have $M(\Z_k,n)=M(\Z_r,n)\vee M(\Z_s,n)$. If  $G_j=\Z_{p^j}$ and $G_i=\Z_{p^i}$, where $p$ is some prime, then assuming $i\le j$, we have $gcd(p^j,p^i)=p^i$, thus by Proposition~\ref{msmash} we have, $$M(\Z_{p^j}, n_j)\wedge M(\Z_{p^i}, m_i)= M(\Z_{p^i}, n_j+m_i)\vee M(\Z_{p^i}, n_j+m_i+1).$$ 
Note that if $p=2$,  $1 \le i$ and $2\ \le j$ then Proposition~\ref{msmash} gives the above splitting.
\end{proof}

\begin{cor}\label{c}
The Strong Gromov Conjecture holds for spin manifolds whose fundamental groups are finite products $\Gamma=\Gamma_1\times\dots\times\Gamma_k$
of 2-dimensional finitely generated 2-avoiding groups such that each $\Gamma_i$ satisfies the Strong Novikov Conjecture.
\end{cor}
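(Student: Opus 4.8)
The plan is to verify that the product group $\Gamma=\Gamma_1\times\cdots\times\Gamma_k$ satisfies the hypotheses of the machinery developed in the earlier sections, and then to conclude exactly as in the proof of Theorem~\ref{m}, except that here $\Gamma$ itself will satisfy the Strong Novikov Conjecture, so no passage to a finite-index subgroup is needed. First I would record that each factor $\Gamma_i$ has property $(^*)_m$ with a \emph{finite} wedge of Moore spaces: this is exactly the proposition asserting that every finitely generated group with $\dim B\Gamma\le 2$ has property $(^*)_m$. Then, by induction on $k$, I would deduce $(^*)_m$ for $\Gamma$ itself: writing $\Gamma=(\Gamma_1\times\cdots\times\Gamma_{k-1})\times\Gamma_k$, the first factor is geometrically finite and has $(^*)_m$ by the inductive hypothesis while $\Gamma_k$ has $(^*)_m$ and, by hypothesis, is $2$-avoiding, so the proposition on products of groups with $(^*)_m$ — one factor being $2$-avoiding — applies. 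The only thing worth noting is that at each stage it is always the last factor $\Gamma_k$ (or $\Gamma_j$) that supplies the $2$-avoiding hypothesis, so one never needs to know whether the partial products are $2$-avoiding, and that proposition applies verbatim.

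Once $(^*)_m$ holds for $\Gamma$, the proposition deducing the $1$-Step Stabilization property from $(^*)_m$ (for geometrically finite groups) shows that $\Gamma$ has the $1$-Step Stabilization property for $KO_*$ in every dimension, in particular for all $m$ with $k\in\{m,m+1\}$. The rest is formal. Each $\Gamma_i$ satisfies the Strong Novikov Conjecture, hence so does the finite product $\Gamma$, and therefore, given a closed spin positive scalar curvature $n$-manifold $M$ with $n>4$ and $\pi_1(M)=\Gamma$, Rosenberg's theorem (Theorem~\ref{Ros}) shows that $M$ is $KO$-inessential. Proposition~\ref{KO-iness} (applied with $k=m$) then shows that $M$ is inessential, and Proposition~\ref{strong iness} (applied with $k=m+1$) shows that $M$ is strongly inessential; since a strongly inessential manifold has $\dim_{mc}\Wi M\le n-2$, this is the Strong Gromov Conjecture for $M$.

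I expect the only step requiring genuine care, rather than bookkeeping, to be the assertion that the Strong Novikov Conjecture passes to finite direct products. Here one should use that for the groups in question the analytic assembly map is in fact \emph{split} injective — the Dirac--dual-Dirac and coarse-geometric methods that establish the Strong Novikov Conjecture produce such splittings — and that an external product of split injections is again split injective; combined with the multiplicativity of the assembly map with respect to the K\"unneth decompositions of $KO_*(B\Gamma_1\times\cdots\times B\Gamma_k)$ and of $KO_*(C^*_r\Gamma_1\otimes\cdots\otimes C^*_r\Gamma_k)$, this yields split injectivity — a fortiori injectivity — of the assembly map for $\Gamma$. Everything else is an immediate assembly of the results proved above.
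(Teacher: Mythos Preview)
Your argument is correct and follows precisely the route the paper intends: the corollary is stated without proof immediately after the chain of propositions (2-dimensional groups have $(^*)_m$; products of $(^*)_m$ groups with one 2-avoiding factor have $(^*)_m$; $(^*)_m$ implies the 1-Step Stabilization property), and your induction on $k$ together with the argument of Theorem~\ref{m} (with $\Gamma'=\Gamma$, hence yielding the \emph{Strong} Gromov Conjecture rather than merely the original one) is exactly the intended deduction. You are in fact more careful than the paper on one point: the paper silently assumes that the Strong Novikov Conjecture for each $\Gamma_i$ implies it for the product $\Gamma$, whereas you flag this and sketch the standard split-injectivity/K\"unneth justification.
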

We note that 2-avoiding torsion free one-relator groups are among examples of above $\Gamma_i$. Since they have finite asymptotic dimension~\cite{BD}, \cite{Dr3}, \cite{T},
they satisfy the coarse Baum-Connes conjecture for both $KU$ and $KO$. Therefore, they satisfy the Strong Gromov Conjecture~\cite{HR}. By the Lyndon-Cockeroft theorem~\cite{L},\cite{C} they are 2-dimensional.

We recall that a 2-dimensional group is 2-avoiding if its abelianization does not contain $\mathbb Z_2$ as a direct summand. 
An affirmative answer to the following question would prove the original Gromov's conjecture
for the products of all 2-dimensional groups.
\begin{question}
Does every 2-dimensional group contain a 2-avoiding finite index subgroup?
\end{question}
We believe that the other required condition for 2-dimensional groups in Corollary~\ref{c},  the Strong Novikov Conjecture, can be derived from results of Mathai~\cite{Mat} and Hanke-Schick~\cite{HS}.

\subsection{Bolotov's question.}
Dmitri Bolotov constructed an  example $M_b$ of  a closed  inessential  spin 4-manifold which is not strongly inessential.
At the end of his paper he asked if any product with a torus $M_b\times T^p$ can carry a metric of positive scalar curvature~\cite{Bo2}.
Here we give a negative answer.
\begin{prop}
The manifolds  $M_b\times T^p$ for all $p> 0$ do not admit a metric with positive scalar curvature.
\end{prop}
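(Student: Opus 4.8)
The plan is to argue by contradiction, exploiting that $\dim(M_b\times T^p)=p+4>4$ — so that the machinery of this paper applies to the product even though it cannot be applied to $M_b$ itself — and then to transfer the resulting strong inessentiality of $M_b\times T^p$ back down to $M_b$, contradicting Bolotov.

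First I would set $\Gamma=\pi_1(M_b)$, so that $\pi_1(M_b\times T^p)=\Gamma\times\Z^p$ and $B(\Gamma\times\Z^p)=B\Gamma\times T^p$. Bolotov's group $\Gamma$ satisfies the Strong Novikov Conjecture and property $(^*)_s$, and both are inherited by $\Gamma\times\Z^p$ (the Strong Novikov Conjecture survives products with $\Z^p$; property $(^*)_s$ by Proposition~\ref{oper}), so $\Gamma\times\Z^p$ has the $1$-Step Stabilization property for $KO_*$ by Proposition~\ref{imp}. Assuming $M_b\times T^p$ carries a PSC metric for some $p\ge1$, Rosenberg's Theorem~\ref{Ros} shows $M_b\times T^p$ is $KO$-inessential; Proposition~\ref{KO-iness} then shows it is inessential, and Proposition~\ref{strong iness} shows it is strongly inessential. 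Note one cannot argue directly on $M_b$ here: $M_b$ is only $ko$-essential, not necessarily $KO$-essential, so Rosenberg's theorem gives no obstruction to PSC on $M_b$ itself.

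Next I would extract a statement about $M_b$ alone. Choose a cellular classifying map $u\colon M_b\to B\Gamma$; then $u\times\id_{T^p}$ classifies $M_b\times T^p$ and, by strong inessentiality, deforms into the $(p+2)$-skeleton $W=(B\Gamma\times T^p)^{(p+2)}$, whence $(u\times\id)_*[M_b\times T^p]_{ko}$ dies under the collapse $j\colon B\Gamma\times T^p\to(B\Gamma\times T^p)/W$ (the elementary converse of Lemma~\ref{2-obstruction}: a map into $W$ composes to a null map into $(B\Gamma\times T^p)/W$). By multiplicativity of $ko$-orientations, $(u\times\id)_*[M_b\times T^p]_{ko}=u_*[M_b]_{ko}\times[T^p]_{ko}$. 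Now use the stable splitting $T^p_+\simeq\bigvee_{S\subseteq\{1,\dots,p\}}S^{|S|}$ (Corollary~\ref{raag}), which splits $ko_{p+4}(B\Gamma\times T^p)$ as $\bigoplus_S ko_{p+4-|S|}(B\Gamma)$; since collapsing $T^p$ onto its top cell carries $[T^p]_{ko}$ to a generator of $ko_p(S^p)\cong ko_0(pt)=\Z$, and since the part of $W$ lying over the top cell of $T^p$ is $B\Gamma^{(2)}\times(\text{top cell})$, the top ($S=\{1,\dots,p\}$) component of $j_*\big(u_*[M_b]_{ko}\times[T^p]_{ko}\big)$ is, up to the suspension isomorphism, the image of $u_*[M_b]_{ko}$ in $ko_4(B\Gamma/B\Gamma^{(2)})$. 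Vanishing of $j_*(\cdots)$ therefore forces $u_*[M_b]_{ko}\in\Im\big(ko_4(B\Gamma^{(2)})\to ko_4(B\Gamma)\big)$.

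Finally, this contradicts Bolotov. Using Proposition~\ref{ref2} to arrange $u(M_b^{(3)})\subseteq B\Gamma^{(2)}$ and $u(M_b)\subseteq B\Gamma^{(3)}$, the sole remaining obstruction to compressing $u$ into $B\Gamma^{(2)}$ is the image of $u_*[M_b]_{ko}$ in $ko_4(B\Gamma^{(3)}/B\Gamma^{(2)})$ (a wedge of $3$-spheres), and its nonvanishing — equivalently $u_*[M_b]_{ko}\notin\Im\big(ko_4(B\Gamma^{(2)})\to ko_4(B\Gamma)\big)$ — is exactly the content of Bolotov's example being $ko$-essential and not strongly inessential. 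This contradicts the previous step, so no $M_b\times T^p$ admits PSC. The hard part will be the penultimate step: making precise, through the stable splitting of $T^p$, that the second obstruction of the product restricts in its top component to the (unsuspended) $ko$-homological second obstruction of $M_b$, and — intertwined with this — that in dimension $4$ the failure of strong inessentiality of $M_b$ is genuinely captured by that single class in $ko_4(B\Gamma/B\Gamma^{(2)})$ modulo its indeterminacy (the unavailability of the excision input Theorem~\ref{h-excision} at $n=4$ being exactly why $M_b$ escapes the theorems of this paper while $M_b\times T^p$ does not).
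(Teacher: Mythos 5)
Your overall structure matches the paper's: both invoke $(^*)_s$ for $\Gamma\times\Z^p$ via Proposition~\ref{oper}, the Strong Novikov Conjecture for $\Gamma\times\Z^p$, and then the Theorem~\ref{m} pipeline (Rosenberg $\Rightarrow$ $KO$-inessential $\Rightarrow$ inessential $\Rightarrow$ strongly inessential) applied to $M_b\times T^p$. The difference is in how the contradiction is extracted, and there your proposal both overshoots and leaves a gap.

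Your remark ``one cannot argue directly on $M_b$ here: $M_b$ is only $ko$-essential, not necessarily $KO$-essential'' is where things go astray. Since $\Gamma=\Z\ast\Z^3$ satisfies $(^*)_s$ (Proposition~\ref{oper}, or directly because $B\Gamma=S^1\vee T^3$ is a subcomplex of $T^4$, Corollary~\ref{raag}), $B\Gamma$ is stably a wedge of spheres of dimension $\le 3$, so $per\colon ko_4(B\Gamma)\to KO_4(B\Gamma)$ is a sum of copies of $per\colon ko_{4-j}(\pt)\to KO_{4-j}(\pt)$ with $1\le j\le 3$, all isomorphisms. Thus $ko$-essentiality of $M_b$ already forces $KO$-essentiality of $M_b$. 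The same computation for $B\Gamma\times T^p$, stably a wedge of spheres of dimension $\le p+3$, shows $per$ is an isomorphism on $ko_{p+4}$, and the stable splitting/K\"unneth argument you yourself invoke gives that $(u\times\id)_*[M_b\times T^p]_{ko}\ne 0$, hence $(u\times\id)_*[M_b\times T^p]_{KO}\ne 0$. At that point Rosenberg's index theorem (Theorem~\ref{Ros}) alone forbids PSC on $M_b\times T^p$; the entire detour through strong inessentiality, the collapse $j$, the top-cell component of the second obstruction, and the attempted contradiction with Bolotov is unnecessary.

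The gap you flag at the end is also genuine and not a minor bookkeeping issue: to turn ``$u_*[M_b]_{ko}\in\Im\bigl(ko_4(B\Gamma^{(2)})\to ko_4(B\Gamma)\bigr)$'' into a contradiction you would need the converse direction of Lemma~\ref{2-obstruction} in dimension $4$, i.e.\ that the failure of strong inessentiality of a $4$-manifold is detected in $ko_4(B\Gamma/B\Gamma^{(2)})$. But Lemma~\ref{2-obstruction} (through Theorem~\ref{h-excision} and the Freudenthal step) genuinely requires $n>4$, and Bolotov's theorem only asserts $ko$-essentiality ($u_*[M_b]_{ko}\ne 0$ in $ko_4(B\Gamma)$), which does not by itself preclude $u_*[M_b]_{ko}$ from lying in the image of $ko_4(B\Gamma^{(2)})$. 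So your final step is not established, and the argument as written does not close. The fix is not to shore up that step but to discard it: $KO$-essentiality of $M_b\times T^p$, which follows for free from $(^*)_s$, already gives the contradiction at the Rosenberg stage.
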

\begin{proof}
The fundamental group of Bolotov's manifold is $\Gamma=\mathbb Z\ast\mathbb Z^3$.
By Proposition~\ref{oper} the group $\Gamma\times\Z^p$ satisfies the condition $(^*)_s$. 
The Strong Novikov conjecture holds for the group $\Gamma\times\Z^p$ since it is constructed out of  integers by taking operation of the product and the free product. 
Note that $M_b\times T^p$ is spin and inessential. By Theorem~\ref{m} the Strong Gromov conjecture holds for $M_b\times T^p$ for $p>0$.
Therefore, $M_b\times T^p$ cannot  carry a metric of positive scalar curvature. Hence $M_b$
cannot carry a metric of positive scalar curvature as well.
\end{proof}

\end{document}